\definecolor{darkgreen}{rgb}{0,.6,0}
\numberwithin{equation}{section}\theoremstyle{definition}
 \newtheorem{Theorem}[equation]{Theorem}
 \newtheorem{Prop}[equation]{Proposition}
 \newtheorem{Lemma}[equation]{Lemma}
 \newtheorem{Cor}[equation]{Corollary}
 \newtheorem{Notation}[equation]{Notation}
 \newtheorem{Defn}[equation]{Definition}%[section]
 \newtheorem{Remark}[equation]{Remark}
\newtheorem{Situation}[equation]{Situation}
\def\enumerate{\begingroup\ifnum\@enumdepth>3\@toodeep\else
      \advance\@enumdepth\@ne
      \edef\@enumctr{enum\romannumeral\the\@enumdepth}%
      \topsep\z@\parskip\z@
      \list{\csname label\@enumctr\endcsname}
        {\@nmbrlisttrue\let\@listctr\@enumctr
         \parsep\z@\itemsep\z@\topsep\z@
         \setcounter{\@enumctr}{0}
         \def \fMakelabel##1{\hss\llap{\rm ##1}}
       }\fi}
 \def\tril{\tikz\draw[red, fill=red, thick](0,0)--(0.6em,0)--(0.6em,-0.6em)--cycle; \tikz \draw [dotted, thick] (0,0)--(0.6em,0)--(0,-0.6em);}
 \def\trir{\tikz\draw[dotted, thick](0.6em,0)-- (0,0)--(0.6em,-0.6em); 
 \tikz \draw [cyan, fill=cyan,thick] (0,0)--(0.6em,0)--(0,-0.6em);}
 \def\pKL{\raisebox{-0.15em}{\tikz{\draw(0,0)--(0.8em,0)--(0em,-0.8em)--cycle;
 %\draw(0em,-0.1em)--(0.7em,-0.1em);
\draw(0em,-0.2em)--(0.6em,-0.2em);
%\draw(0em,-0.3em)--(0.5em,-0.3em);
\draw(0em,-0.4em)--(0.4em,-0.4em);
%\draw(0em,-0.5em)--(0.3em,-0.5em);
\draw(0em,-0.6em)--(0.2em,-0.6em);}}}
 \def\KL{\raisebox{-0.15em}{\tikz{\draw(0,0)--(0.8em,0)--(0em,-0.8em)--cycle;
}}}
 \def\Square{\raisebox{-0.15em}{\tikz{\draw(0,0)--(0.8em,0)--(0.8em,-0.8em)--(0,-0.8em)--cycle;
}}}
 \def\diag{\raisebox{-0.15em}{\tikz{\draw(0,0)--(0.8em,-0.8em);
}}}
 \def\UR{\raisebox{-0.15em}{\tikz{\draw(0,0)--(0.8em,0)--(0.8em,-0.8em)--cycle;
}\,}}
 \def\CC{\raisebox{0.15em}{\tikz[scale=1.4]{\draw(0.4em,-0.4em)--(0.8em,0);
}}}
 \def\UP{\raisebox{0.15em}{\tikz[scale=0.9]{\draw(0,0)--(1.2em,0)--(0.6em,-0.6em)--cycle;
}}}
 \def\RP{\raisebox{-0.15em}{\tikz[scale=1.34]{\draw(0.8em,-0.8em)--(0.8em,0)--(0.4em,-0.4em)--cycle;
}\,}}
 \def\UPC{\raisebox{0.15em}{\tikz[scale=1.4]{\draw(0,0)--(0.8em,0)--(0.4em,-0.4em)--cycle; \draw(0.6em,-0.4em)--(1em,0 em);
}}}
 \def\RPC{\raisebox{-0.15em}{\tikz[scale=1.4]{\draw(0.8em,-0.8em)--(0.8em,0)--(0.4em,-0.4em)--cycle;\draw(0.3em,-0.3em)--(0.8em,0.2 em);
}\,}}
 \def\pUP{{\tikz{\draw(0,0)--(1.2em,0)--(0.6em,-0.6em)--cycle; \draw(0.15em,-0.15em)--(1.05em,-0.15em);
\draw(0.3em,-0.3em)--(0.9em,-0.3em);
\draw(0.45em,-0.45em)--(0.75em,-0.45em);
}}}
 \def\pUPt{\,\raisebox{0.1em}{\tikz[scale=0.42]{\draw(0,0)--(1.2em,0)--(0.6em,-0.6em)--cycle;
\draw(0.2em,-0.2em)--(1em,-0.2em);
%\draw(0.3em,-0.3em)--(0.9em,-0.3em);
\draw(0.4em,-0.4em)--(0.8em,-0.4em);
}}}
 \def\pUPs{\,\raisebox{0.1em}{\tikz[scale=0.5]{\draw(0,0)--(1.2em,0)--(0.6em,-0.6em)--cycle; 
\draw(0.2em,-0.2em)--(1em,-0.2em);
%\draw(0.3em,-0.3em)--(0.9em,-0.3em);
\draw(0.4em,-0.4em)--(0.8em,-0.4em);
}}}
 \def\URs{\raisebox{-0.1em}{\,\tikz[scale=0.7]{\draw(0,0)--(0.8em,0)--(0.8em,-0.8em)--cycle;
}\,}}
 \newcommand{\on}{\operatorname}
 \newcommand{\mc}{\on{main}}
\newcommand{\lmc}{\on{l.\! main}}
\newcommand{\rmc}{\on{r.\! main}}
\newcommand{\minc}{\on{minor}}
\newcommand{\core}{\on{core}}
\newcommand{\Arm}{\on{Arm}}
\newcommand{\Leg}{\on{Leg}}
\newcommand{\Limb}{\on{Limb}}
\newcommand{\stab}{\on{stab}}
\def\leq{\leqslant}
\def\C{\mathbb C}
\def\N{\mathbb N}
\def\F{\mathbb F}
\def\cO{\mathcal O}
\def\cA{\mathcal A}
\def\cL{\mathcal L}
\def\cR{\mathcal R}
\def\cD{\mathcal D}
\def\cV{\mathcal V}
\def\fX{\mathfrak X}
\def\fD{\mathfrak D}
\def\fS{\mathfrak S}
\def\fC{\mathfrak C}
\def\fB{\mathfrak B}
\def\fA{\mathfrak A}
\def\Pl{\mathbb{PL}}
\DeclareMathOperator{\Hom}{Hom}
\DeclareMathOperator{\Ind}{Ind}
\DeclareMathOperator{\Stab}{Stab}
\DeclareMathOperator{\main}{main}
\DeclareMathOperator{\verge}{verge}
\DeclareMathOperator{\suppl}{suppl}
\DeclareMathOperator{\supp}{supp}
\DeclareMathOperator{\Lie}{Lie}
\DeclareMathOperator{\Mat}{Mat}
\begin{document}

%%%%%%%%%%%%%%%%%%%%%%%%%%%%%%%%%%%%%%%%%%%%%%%%%%%%%%%%%%%%%%%%%%%%%%%%%%
%\title{Classification of the monomial orbit modules for \\  $p$-Sylow subgroups of finite classical groups}
\title{On coadjoint orbits for $p$-Sylow subgroups\\ of finite classical groups}

\author{Qiong Guo$^{*}$, Markus Jedlitschky$^{**}$, Richard Dipper$^{**}$\\ \\$^{*}${\footnotesize College of Sciences, Shanghai Institute of Technology} \\ {\footnotesize 201418 Shanghai, PR China}
%\\ \scriptsize{E-mail:qiongguo@hotmail.com}
\smallskip \\$^{**}$ {\footnotesize Institut f\"{u}r Algebra und Zahlentheorie}\\ {\footnotesize Universit\"{a}t Stuttgart, 70569 Stuttgart, Germany}
%\\ \scriptsize{E-mail:  markus@mathematik.uni-stuttgart.de, Richard.dipper@mathematik.uni-stuttgart.de}\\
\setcounter{footnote}{-1}\footnote{\scriptsize E-mail: qiongguo@hotmail.com, markus.jedlitschky@gmail.com, richard.dipper@mathematik.uni-stuttgart.de}
\setcounter{footnote}{-1}\footnote{%\emph{Date:} \today}
%\color{red}
{\scriptsize This work was  supported by NSFC no.11601338.}}
\setcounter{footnote}{-1}\footnote{\scriptsize\emph{2010 Mathematics Subject Classification.} Primary 20C15, 20D15. Secondary 20C33, 20D20 }
\setcounter{footnote}{-1}\footnote{\scriptsize\emph{Key words and phrases.}  $p$-Sylow  subgroups, Monomial linearisation, Supercharacter}}
\date{}

%%%%%%%%%%%%%%%%%%%%%%%%%%%%%%%%%%%%%%%%%%%%%%%%%%%%%%%%%%%%%%%%%%%%%%%%%%

\maketitle

\begin{center}
\today
\end{center}

\begin{abstract}
Kirillov's orbit theory provides a powerful tool for the investigation of irreducible unitary representations of many classes of Lie groups. In a previous paper we used a modification hereof, called monomial linearisation, to construct a monomial basis of the regular representation of $p$-Sylow subgroups $U$ of the finite classical groups of untwisted type. In this sequel to this article we determine the stabilizers of special orbit generators and show, that for the groups of Lie type $\fB_n$ and $\fD_n$ a subclass of the orbit modules decompose the $U$-modules affording the Andr\'{e}-Neto supercharacters into a direct sum of submodules. Moreover these special orbit modules are either isomorphic or have no irreducible constituent in common, and each irreducible $U$ module is up to isomorphism constituent of precisely one of these.
\end{abstract}

\section{Introduction}
To determine the irreducible complex characters of group $\tilde U_n(q)$ of upper unitriangular $n\times n$-matrices over the finite field $\F_q$ is a notorious hard problem. Indeed, doing this simultaneously for all $n\in \N$ and all prime powers $q=p^k$ is known to be a wild problem. Being confronted with an undoable wild problem, one can,  try to loosen up some requirements of the task to obtain an easier problem which actually can be solved. For the finite unitriangular group $\tilde U_n(q)$ supercharacter theory does precisely this. Andr\'{e} in [\cite{andre}] using Kirillov theory [\cite{kirillov}], and Yan [\cite{yan}] (by different methods) introduced a set of $\tilde U_n$-characters, called supercharacters [\cite{super}], which are pairwise orthogonal and contain every irreducible complex character  of $\tilde U_n(q)$ as constituent of precisely one supercharacter. They also considered certain pairwise disjoint unions of conjugacy classes, called superclasses, such that every conjugacy class of $\tilde U_n(q)$ is contained in precisely one superclass, supercharacters are constant on superclasses and supercharacters are in bijection with superclasses. Actually, this Andr\'{e}-Yan construction was axiomatized by Diaconis and Isaacs in [\cite{super}] and named supercharacter theory.

To extend Andr\'{e}'s or Yan's method to the $p$-Sylow subgroups $U_n$ of other finite classical groups directly does not work, basically since these are not algebra groups. Andr\'{e} and Neto [\cite{andreneto1},\cite{andreneto2},\cite{andreneto3}] hence defined supercharacter theories for $U_n$ (of non twisted Lie type) by restricting certain supercharacters from the overlying full unitriangular group $\tilde U_N(q)$ to $U_n$, respectively intersecting superclasses of $\tilde U_N(q)$ with $U_n$. This, however, is a rather coarse supercharacter theory and it is desirable to find a finer one, based on monomial orbits.

In his doctoral thesis the second named author used a different approach, called  monomial linearisation, which can be considered as generalization of Kirillov's orbit method. He applied this to the even orthogonal groups and obtained a further decomposition of the Andr\'{e}-Neto supercharacters into characters, which are either orthogonal or coincide. Moreover every irreducible character of $U_n$ occurs in precisely one of those as irreducible constituent.

In a previous paper [\cite{GJD}] we exhibited a monomial linearisation for the $p$-Sylow subgroups $U_n$ of finite groups of Lie types $\fB_n, \fD_n$ and $\fC_n$, by constructing a monomial basis for the group algebra $\C U_n$. Thus we  extended the construction of [\cite{markus}] to all classical groups of untwisted Lie type. This decomposes $\C U_n$ as $U_n$-modules into the direct sum of orbit modules. We showed that every orbit module is isomorphic to a so called staircase orbit module and classified the staircase orbits by certain elements of the monomial basis of $\C U_n$, named cores.

In this sequel to [\cite{GJD}] we investigate even more specialized orbits, called main separated orbits and show, that every irreducible $\C U_n$-module occurs as irreducible constituent in one of those. Our goal for this paper is to prove, that the characters afforded by main separated orbit modules coincide or are orthogonal. So far we managed to prove this for $U_n$ of type $\fB_n, \fD_n$, such that indeed every irreducible $U_n$-character is up to isomorphisms constituent of precisely one character afforded by a main separated orbit module. For type $\fC_n$ we need an extra (quite restrictive) hypothesis and hence can show this result only for a certain subset of main separated orbit modules.

Thus we have constructed a decomposition of the Andr\'{e}-Neto supercharacters into characters of $U_n$ afforded by main separated orbit modules for the untwisted orthogonal groups in general and the symplectic groups for some special ones.

Section \ref{setup} is preliminary collecting the main definitions and results of [\cite{GJD}]. In section \ref{secleftu} we define the notation of main separated orbits and show, that every irreducible $\C U_n$-module is constituent of a main separated orbit module. After some preparation in section \ref{secverge} we prove our main result in the last section and draw some consequences of this.

\section{Set up}\label{setup}

 In this preliminary section we collect notation and some basic facts from [\cite{GJD}]. Throughout  $U=U(\fB_n),U(\fC_n) $ or $U(\fD_n)$  denotes the $p$-Sylow subgroup of the finite group of Lie type $\fB_n, \fC_n$ and $\fD_n$ respectively, constructed in [\cite{GJD}], where the describing characteristic of the group is the odd prime $p$.

\begin{Notation}\quad
\begin{enumerate}
\item [1)] Let $e_{ij}$ denote the $N\times N$-matrix over $\F_q$ having entry $1$ at position $(i,j)$ and zeros elsewhere.
\item [2)]  For any $r\times s$-matrix $A$, we denote the $(i,j)$-th entry of $A$ by $A_{ij}$ and  let $A^t$ be the transposed $s\times r$ matrix.
\item [3)]Let $N \in \N$. Define the {\bfseries mirror map}
$\bar{}:\; \{1, \dots, N\} \rightarrow \{1, \dots, N\}:\; i \longmapsto \bar i := N+1-i$, which 
mirrors every entry on $\frac{N+1}{2}$. It satisfies $\bar{\bar i} = i$ and
$ i<j\leq k  \Leftrightarrow \bar k \leq \bar j < \bar i$  for all $i,j,k \in \{1,\dots,N\}$.
\item [4)] Let $\Square$  $=$  $\{ (i,j) \; | \; 1 \leq i,j \leq N \},$
and $\UR=$  $\{ (i,j) \in \Square \; | \; i < j \}$.
\item [5)] Denote the diagonal by $\diag :=\{(i,j) \in \Square \; | \; i=j \}$ and 
half of the anti-diagonal by ${\CC}:=\{ (i,j)\in \UR \; | \;  i = \bar j \}$. 
\item [6)] Let  ${\UP}= \{ (i,j) \in \Square \; | \; i < j < \bar i \}$ and 
${\RP}=\{ (i,j)\in \Square \; | \; \bar j < i < j \}$.
Thus $\UR=\UP\cup \CC \cup \RP$.
\item[7)] Set $\UPC=\UP\cup \CC$ and $\RPC=\RP\cup \CC$.
\item[8)]Let
$\pUP=\UP $ in types $\fB_n, \fD_n$ and let $\pUP=\UPC$ in type $\fC_n.$
\item[9)] For $A\in \Mat_{N\times N}(q)$ define the {\bf support} of $A$ to be 
$
\supp A=\{(i,j)\in \Square\,|\,A_{ij}\neq 0\}
$
and for $S\subseteq \square$, define 
$V_S=\{A\in \Mat_{N\times N}(q)\,|\, \supp A\subseteq S\}$. In particular, for the Dynkin type $\fX_n=\fB_n, \fC_n$ or $ \fD_n$, set $V=V(\fX_n)=V_{\pUPs}= \bigoplus_{(i,j)\in \pUPs} \F_q e_{ij}$.  
\item[10)] For $S\subseteq \square$, define  $\pi_S:\Mat_{N\times N}(q) \longrightarrow V_S$ by mapping $A\in \Mat_{N\times N}(q)$  to $ \sum_{(i,j)\in S}A_{ij}e_{ij}$. In particular set $\pi = \pi_{\pUPs}$.\hfill$\square$
\end{enumerate}
\end{Notation}

\begin{Theorem}[{[\cite{GJD}]}]\label{unique}
Each $u\in U$ is uniquely determined by the entries on positions in $\pUP$ and hence the restriction of $\pi = \pi_{\pUPs}$ to $U$ is a bijection from $U$ onto $V=V_{\pUPs}$. \hfill$\square$
\end{Theorem}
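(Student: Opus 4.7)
The plan is to leverage the embedding of $U$ into the full upper unitriangular group $\tilde U_N(q)$ as the subgroup fixing the bilinear (symmetric for $\fB_n,\fD_n$, alternating for $\fC_n$) form associated to the anti-diagonal matrix $J$. Writing $u = 1_N + X$ with $X$ strictly upper triangular, the condition $u^t J u = J$ unfolds into a system of polynomial equations on the entries $X_{ij}$, and my goal is to show this system expresses every entry in $\UR \setminus \pUP$ uniquely in terms of the entries on $\pUP$.

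First I would examine, for each pair $(i,j) \in \UR$, the equation produced by $(u^t J u)_{\bar j,i} = J_{\bar j,i}$. Because $J$ has its support on the anti-diagonal, this equation reads $X_{ij} + \varepsilon X_{\bar j,\bar i} + Q_{ij}(X) = 0$, where $\varepsilon = \pm 1$ depends on the form and $Q_{ij}$ is a polynomial in entries $X_{k\ell}$ whose positions are strictly closer to the anti-diagonal (smaller $\ell-k$) than either $(i,j)$ or $(\bar j,\bar i)$. This is the triangularity that makes the system solvable recursively. For $(i,j) \in \RP$, this equation expresses $X_{\bar j,\bar i}$ (a position in $\UP$) and $X_{ij}$ as mutually determined; fixing the $\pUP$ entries as free parameters therefore forces the $\RP$ entries.

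Next I would treat the anti-diagonal positions $(i,\bar i) \in \CC$ separately, since here the equation degenerates: the two ``paired'' positions coincide. For the orthogonal types $\fB_n, \fD_n$ the pairing equation becomes $2X_{i,\bar i} + Q_{i,\bar i}(X) = 0$, which in odd characteristic $p$ uniquely determines $X_{i,\bar i}$ from the previously determined entries in $\UP$; this is exactly why $\pUP = \UP$ for these types. For $\fC_n$ the alternating form produces $\varepsilon = -1$, the linear term cancels, and $X_{i,\bar i}$ is left as a free parameter, which matches $\pUP = \UPC$. The odd-characteristic assumption is precisely what keeps the $\fB_n,\fD_n$ case well-posed.

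Finally I would assemble uniqueness and surjectivity: ordering positions by anti-diagonal distance, one reads off that $\pi|_U$ is injective (the $\pUP$ data plus the recursive relations reconstructs $u$) and surjective (given arbitrary values on $\pUP$, back-solving through the triangular system yields a unique $X$ with $1_N + X \in U$). The main obstacle is bookkeeping the correct recursion order and verifying case-by-case on $\CC$ that the three Lie types behave as the definition of $\pUP$ prescribes; aside from this, once the triangular structure of $u^t J u = J$ is made explicit, the argument reduces to routine polynomial inversion in odd characteristic.
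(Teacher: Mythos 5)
Your plan matches the paper's own derivation: the recursion you extract from the form relation $u^t J u = J$ is exactly the formula $p_{rs}=\pm t_{\bar s\,\bar r}-\sum_{r<l<s}t_{\bar s\,\bar l}\,p_{rl}$ of display~\ref{prs+} (quoted from [\cite{GJD}, 3.14, 3.22]), and your odd-characteristic case split on the anti-diagonal $\CC$ is precisely what makes $\pUP=\UP$ in types $\fB_n,\fD_n$ but $\pUP=\UPC$ in type $\fC_n$. Aside from a small index slip --- for $(i,j)\in\RP$ the nontrivial entry of $u^tJu$ is at position $(j,\bar i)$, not $(\bar j,i)$, the latter vanishing identically for upper unitriangular $u$ --- your triangular-recursion argument coincides with the paper's.
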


More precisely for $u\in U$ and  $(r,s)\in \RP$ we have shown in [\cite{GJD}] that 
$u_{rs}$  is determined by the  entries on the positions which are to the left of $(r,s)$ or to the left of or on  $(\bar s, \bar r)$, which we illustrate as follows:
\begin{equation}\label{dependleft} 
\includegraphics[width=0.38\textwidth]{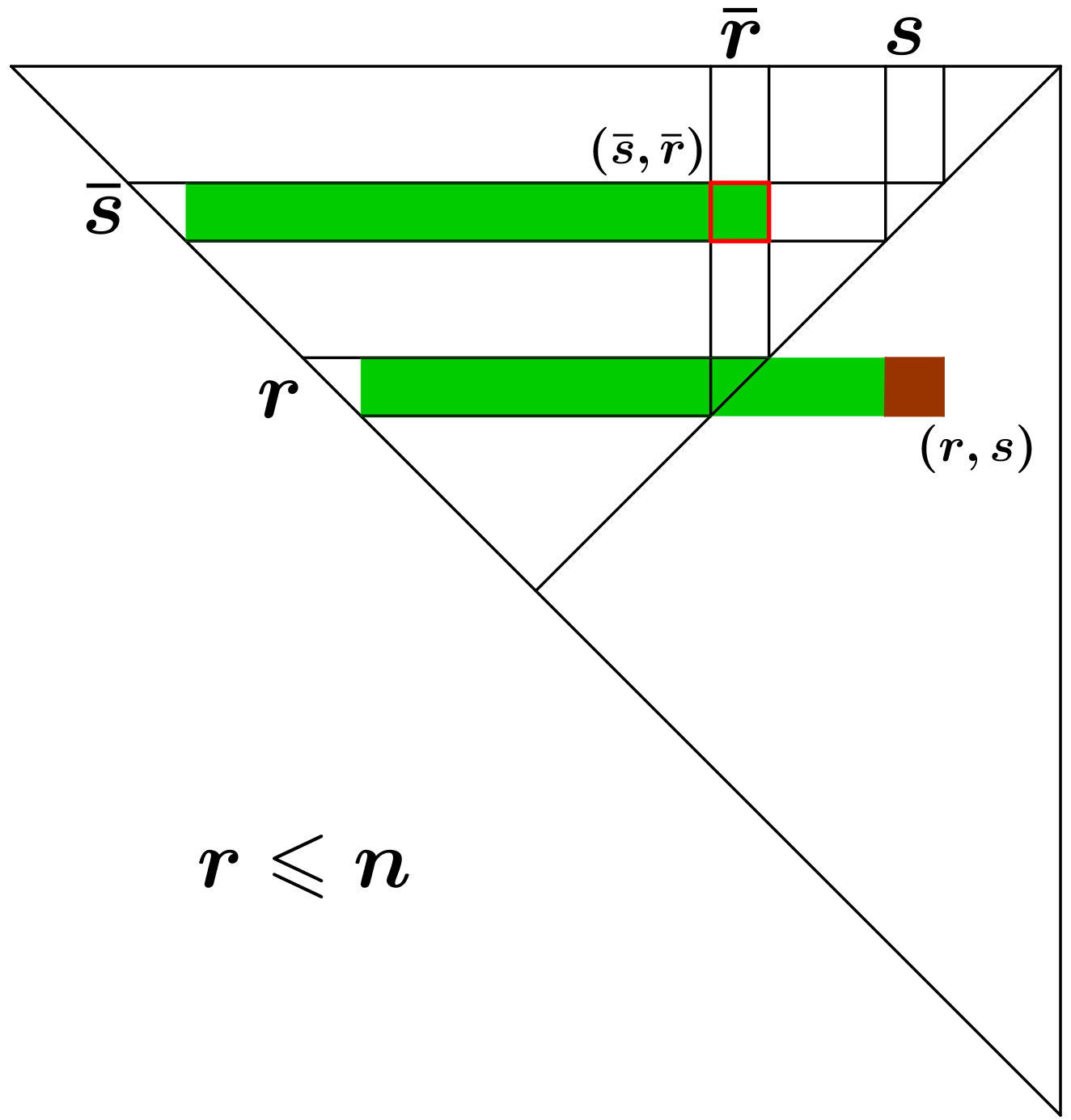}
\quad\quad
\includegraphics[width=0.38\textwidth]{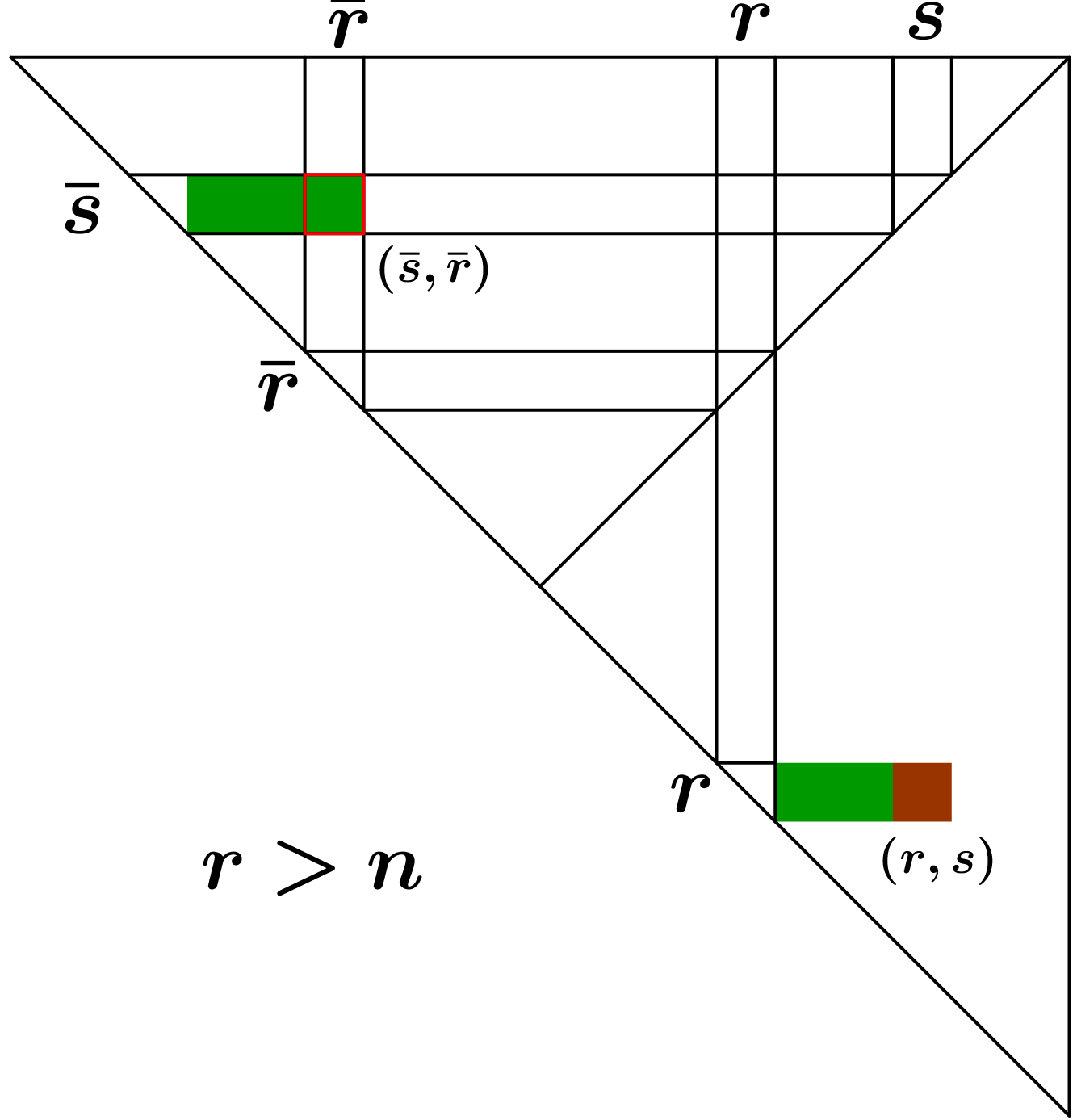}
\end{equation}
Indeed by [\cite{GJD}, 3.14] the entry at position $(r,s)$ of matrix  $u\in U$ can be expressed in terms of  entries of $u$ at the green positions, the entry at the red boxed position $(\bar s, \bar r)$ having coefficient $\pm 1$. From this one obtains easily the formulas in [\cite{GJD},3.22] which is
\begin{equation}\label{prs+}
p_{rs}=\pm t_{\bar s\, \bar r}-\sum\nolimits_{r<l<s}t_{\bar s\, \bar l}p_{rl}
\end{equation}
This equation states that we can express the entry at position $(r,s)$ of matrix  $u\in U$ as  polynomial with coefficients in the prime field $\F_p$ in the entries at positions 
$R_{rs}:= \{ (i,j) \in \pUP \; | \; \bar s \leq i \leq r \text{ and } j \leq \bar r \}$
 of the green boxes in \ref{dependleftresult} , the polynomial attached to position $(\bar s, \bar r)$ being the constant polynomial $\pm 1$.

\begin{equation}\label{dependleftresult} 
\includegraphics[width=0.45\textwidth]{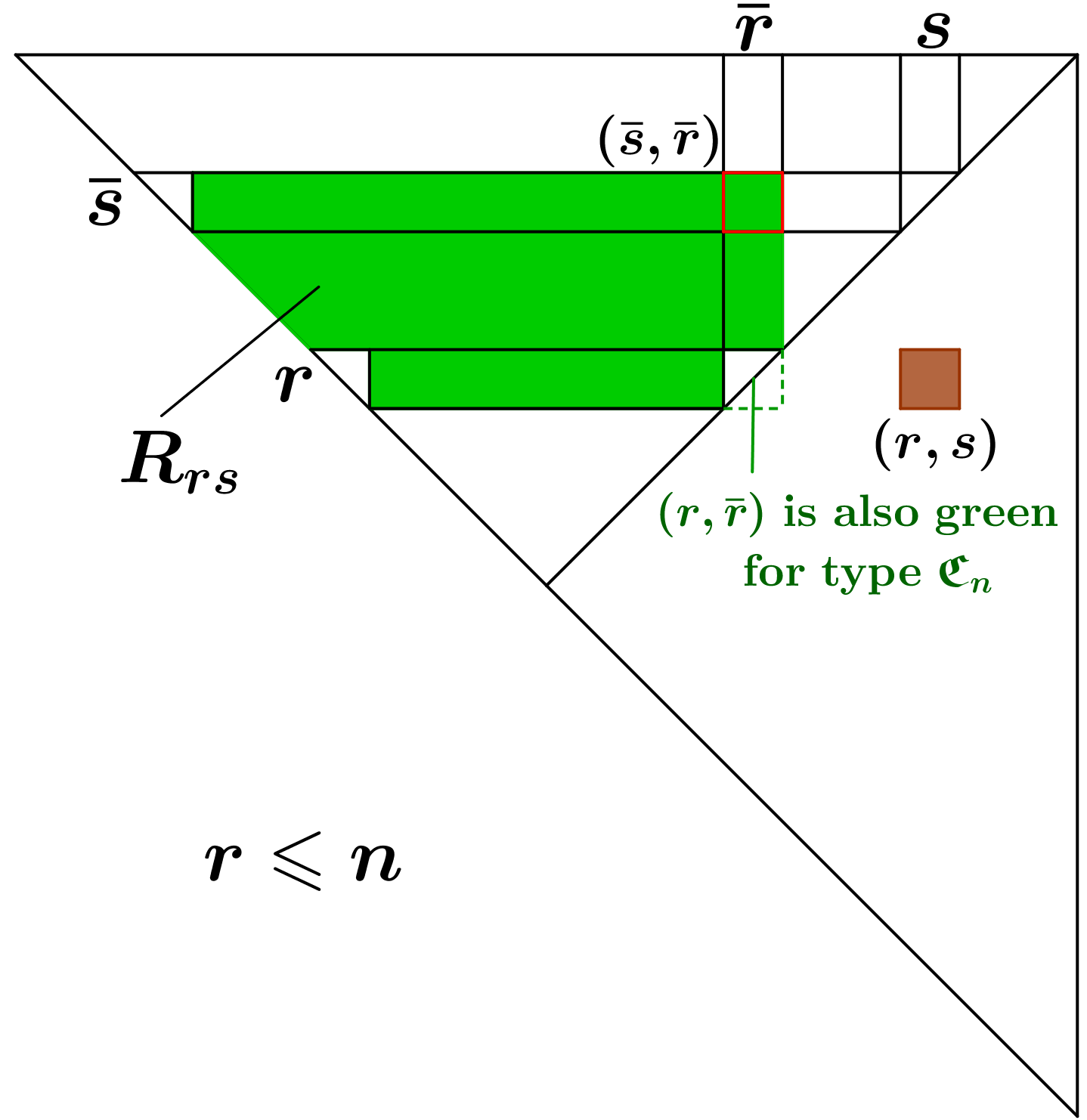}
\quad\quad
\includegraphics[width=0.45\textwidth]{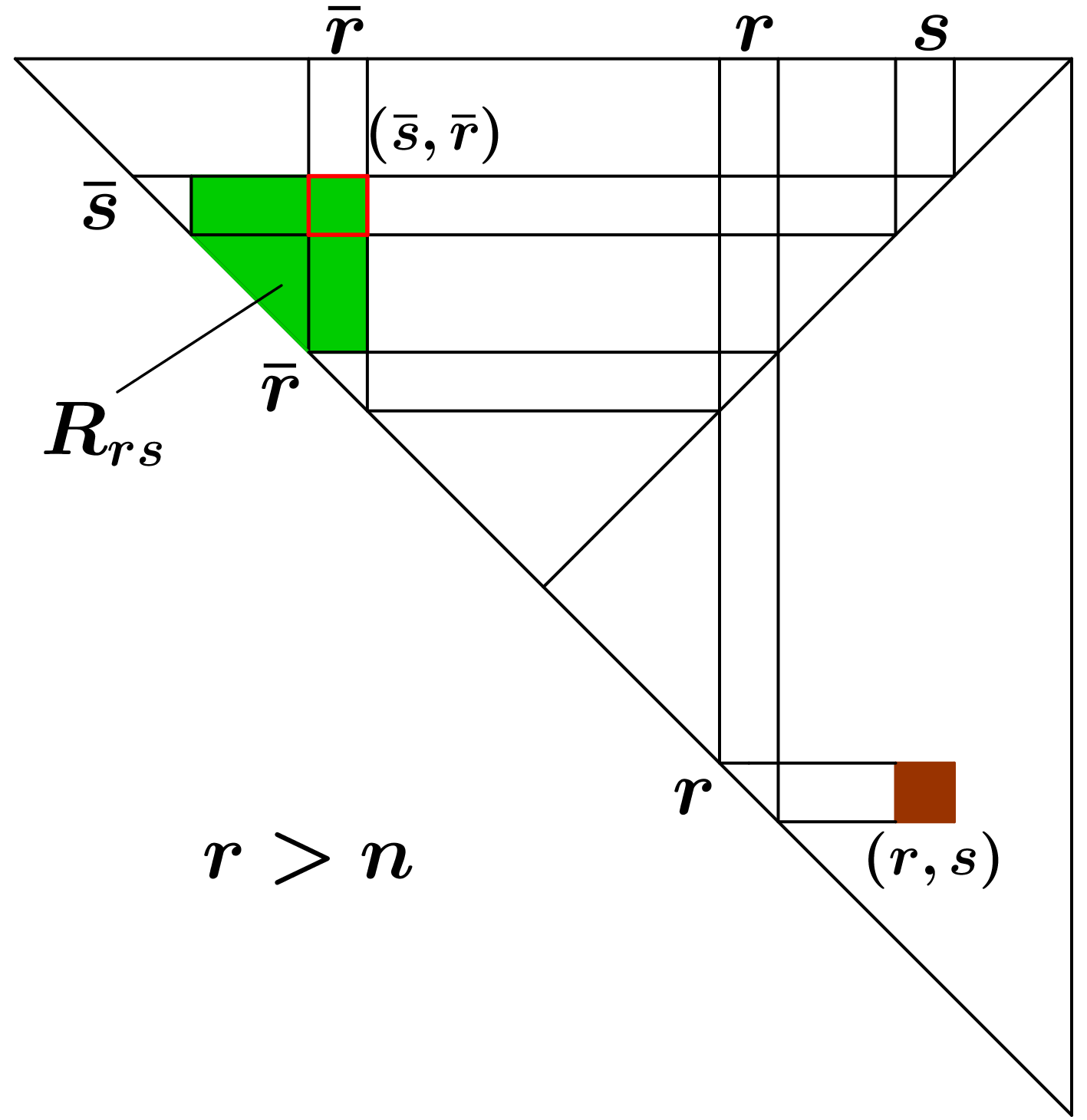}
\end{equation}

\begin{Notation}\label{typeArootsubgroups}
 We identify  the root system  of type $\fA_{N-1}$ with $\{(i,j)\,|\,1\leqslant i, j\leqslant N, i\neq j\}$ and denote it  by $\tilde \Phi$. Then $\tilde \Phi=\tilde \Phi^+\cup \tilde \Phi^-$, where $\tilde \Phi^+=\UR$ and $\tilde \Phi^-=\{(i,j)\in \tilde \Phi\,|\,i>j\}$. Let $\widetilde U = U_N(q)$ be the group of all upper unitriangular $N\times N$-matrices over $\F_q$ and $1 = 1_{\widetilde{U}}$ the identity element of $\widetilde U$. For $1\leqslant i<j \leqslant N$ and $\alpha\in \F_q$ let $\tilde x_{ij}(\alpha)=1+\alpha e_{ij}\in \widetilde U$.  Let $\tilde X_{ij}=\{\tilde x_{ij}(\alpha)\,|\, \alpha\in \F_q\}$, then $\tilde x_{ij}(\alpha)\tilde x_{ij}(\beta)=\tilde x_{ij}(\alpha+\beta)$ for $\alpha, \beta\in \F_q$, and hence $\tilde X_{ij}\cong (\F_q, +)$. These are the root subgroups of $\widetilde U$ of type $\fA_{n-1}$.\hfill$\square$
\end{Notation}
 For type $\fB_n, \fC_n$ and $\fD_n$, the  root subgroups of $U$ are given as follows: 
\begin{Defn}\label{rootsubgroupsU}
Let $(i,j) \in \pUP$. We define  \vspace{-.4cm}
\begin{center}\begin{tabular}{p{.14\textwidth}p{.49\textwidth}p{.33\textwidth}}
in type $\fB_n$: &  $x_{ij}(\alpha) = 1+\alpha e_{ij}- \alpha e_{\bar j\, \bar i} 
=\tilde x_{ij}(\alpha)\tilde x_{\bar j\, \bar i}(-\alpha)$ &  where $\alpha \in \F_q$,\; if $j \ne n+1$, \\[.2cm]
& $x_{i,n+1}(\alpha) = 1+\alpha e_{i, n+1} - \alpha e_{n+1,\bar i} - \frac{1}{2}\alpha^{2}e_{i, \bar i}$ &\\[.2cm]
& $\phantom{x_{i,n+1}(\alpha)}=\tilde x_{i,n+1}(\alpha)\tilde x_{n+1,\, \bar i}(-\alpha)\tilde x_{i \bar i} (-\frac{1}{2}\alpha^2)$ &   where $\alpha \in \F_q$, \\
& $\phantom{x_{i,n+1}(\alpha)}$\\[-.1cm]
in type $\fC_n$: &  $x_{ij}(\alpha) = 1+\alpha e_{ij}- \alpha e_{\bar j\, \bar i}=\tilde x_{ij}(\alpha)\tilde x_{\bar j\, \bar i}(-\alpha) $  &  where $\alpha \in \F_q$,\; if $j \leq n$, \\[.1cm]
&  $x_{ij}(\alpha) = 1+\alpha e_{ij} + \alpha e_{\bar j\, \bar i}=\tilde x_{ij}(\alpha)\tilde x_{\bar j\, \bar i}(\alpha)$ &  where $\alpha \in \F_q$,\; if $n < j <  \bar i$, \\[.1cm]
& $x_{i\bar i}(\alpha) = 1+\alpha e_{i\, \bar i}=\tilde x_{i \,\bar i}(\alpha)$ & where $\alpha \in \F_q$, \\[.3cm]
in type $\fD_n$: &  $x_{ij}(\alpha) = 1+\alpha e_{ij}- \alpha e_{\bar j\, \bar i}=\tilde x_{ij}(\alpha)\tilde x_{\bar j\, \bar i}(-\alpha) $ &  where $\alpha \in \F_q$.
\end{tabular}
\end{center}
We define $X_{ij}=\{x_{ij}(\alpha)\,|\, \alpha\in \F_q\}$. Then $X_{ij}\cong (\F_q, +)$ is the root subgroup of $U$ associated to the position $(i,j)\in \pUP$. \hfill$\square$
\end{Defn}

\begin{Defn}\label{Upattern}
 Let $J \subseteq \pUP$ be a set satisfying 
\begin{align*} \text{i}) && (i,j), (j,k) \in J &\quad\quad \Rightarrow \quad\quad (i,k) \in J  \\ \text{and} \quad \text{ii}) && (i,j),(\bar k, \bar j) \in J, \; (i,k) \in \pUP &\quad\quad \Rightarrow \quad\quad (i,k) \in J. \end{align*}
Then $J$ is a {closed subset} of $\pUP$.  In type $\fA_{n-1}$, $J\subseteq \UR$ is closed if condition i)  is satisfied. Note that  $(i,j), (j,k)\in \pUP$ implies $(i,k)\in \pUP$. Thus considered as type $\fA$-set, $\pUP$ is closed in $\UR= \tilde \Phi^+$.  \hfill $\square$
\end{Defn}

We have by Theorem 3.28 of [\cite{GJD}] :
 \begin{Theorem}\label{anyorder}\label{def of generator}
Let $J \subseteq \pUP$ (resp. $J\subseteq \UR$) be closed. Define the {\bfseries pattern subgroup} $U_J$ (resp. $\widetilde U_J$) to be the subgroup of $U$ (resp. $\widetilde U$) generated by all root subgroups $X_{ij}$ (resp. $\tilde X_{ij}$) with $(i,j) \in J$. For any fixed linear ordering on $J$,   each $u \in U_J$ (resp. $\widetilde U_J$) can be uniquely written as a product of ${x}_{ij}(\lambda)$'s (resp. $\tilde x_{ij}(\lambda)$'s), where $(i,j)$ runs through $J$ and $\lambda$ runs through $\F_q$ with the product taken in that fixed order. \hfill$\square$
\end{Theorem}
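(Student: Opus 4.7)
The plan is to proceed in two stages: first establish that the Chevalley-type commutator relations for the generators $x_{ij}(\alpha)$ close within the set of generators indexed by $J$; second, combine this with Theorem \ref{unique} and a counting argument to obtain both existence and uniqueness of the ordered factorisation.

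For the type $\fA_{N-1}$ case (the group $\widetilde U_J$), the commutator formula reads
\[
[\tilde x_{ij}(\alpha), \tilde x_{kl}(\beta)] = \begin{cases}\tilde x_{il}(\alpha\beta) & \text{if } j=k,\ i\ne l,\\ \tilde x_{kj}(-\alpha\beta) & \text{if } l=i,\ k\ne j,\\ 1 & \text{otherwise (and } (i,j),(k,l)\in \UR).\end{cases}
\]
Condition (i) of Definition \ref{Upattern} is exactly the closure property required to guarantee that every such commutator lies in $\widetilde U_J$. For types $\fB_n,\fC_n,\fD_n$, each generator $x_{ij}(\alpha)$ is, by Definition \ref{rootsubgroupsU}, a product of at most three type-$\fA$ root elements in $\widetilde U$ involving the mirror-pair of positions $(i,j)$ and $(\bar j,\bar i)$, plus in type $\fB_n$ the quadratic correction $\tilde x_{i\bar i}(-\tfrac12\alpha^2)$ when $j=n+1$. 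Expanding $[x_{ij}(\alpha),x_{kl}(\beta)]$ via the $\widetilde U$-commutator formulas produces contributions at positions $(i',j')$ together with their mirror-pair $(\bar j',\bar i')$. Condition (ii) is designed precisely to absorb these mirrored by-products into $J$, while the anti-diagonal terms (arising in type $\fC_n$ from the $X_{i\bar i}$ and in type $\fB_n$ from the $-\tfrac12\alpha^2$ summand) are handled by the convention for $\pUP$ in each type and the closure under $(i,j),(j,k)\mapsto (i,k)$.

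With closure under commutators in hand, existence of the ordered factorisation follows by the usual straightening argument: fix the linear ordering on $J$, take an arbitrary word in the generators, and repeatedly apply either the additive rule $x_{ij}(\alpha)x_{ij}(\beta)=x_{ij}(\alpha+\beta)$ or a commutation $x_{ij}(\alpha)x_{kl}(\beta)=x_{kl}(\beta)x_{ij}(\alpha)\cdot c$ where $c$ is a product of generators $x_{mn}(\gamma)$ with $(m,n)\in J$. A length-plus-height induction (height being, say, $j-i$ or a lexicographic refinement) shows termination, since each commutator by-product sits at a position closer to the anti-diagonal than its parents. The consequence is that the ordered-product map
\[
\varphi: \F_q^J \longrightarrow U_J, \qquad (\lambda_{ij})_{(i,j)\in J}\longmapsto \prod_{(i,j)\in J} x_{ij}(\lambda_{ij})
\]
is surjective, so $|U_J|\le q^{|J|}$.

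For uniqueness, I would use Theorem \ref{unique} together with a triangularity argument. Given the linear order on $J$, the $(i,j)$-entry of $\varphi\bigl((\lambda_{rs})\bigr)$ equals $\lambda_{ij}$ plus a polynomial in those $\lambda_{rs}$ with $(r,s)$ preceding $(i,j)$ in the ordering (concretely, contributions come from factorisations $x_{rs}(\lambda_{rs}) x_{s\,\cdot}(\cdots)\cdots$ landing at $(i,j)$). Inverting this triangular system recovers every $\lambda_{ij}$ from the matrix entries on $\pUP$, and those entries determine the element of $U$ uniquely by Theorem \ref{unique}. Hence $\varphi$ is injective and therefore a bijection, yielding $|U_J|=q^{|J|}$ and uniqueness of the ordered factorisation.

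The main obstacle will be the commutator-closure verification in type $\fB_n$, where the quadratic contribution in $x_{i,n+1}(\alpha)$ creates an entry on the anti-diagonal that must be controlled when two such long-root generators are commuted, and in type $\fC_n$ where the generators $x_{i\bar i}(\alpha)$ on $\CC$ interact with the mirror-coupled generators on $\UP$; in both cases one has to check that condition (ii) of Definition \ref{Upattern} (together with the convention $\pUP=\UPC$ in type $\fC_n$ and $\pUP=\UP$ in types $\fB_n,\fD_n$) really suffices. Once these case-by-case commutator computations are done, the rest of the argument (straightening plus counting) is uniform across types.
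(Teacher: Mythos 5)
The theorem you are proving is quoted in this paper from [\cite{GJD}, Theorem 3.28]; the present paper does not reprove it, so there is no internal proof against which to compare. That said, the route you take --- commutator closure from conditions (i) and (ii) of Definition \ref{Upattern}, straightening for surjectivity of the ordered-product map $\varphi$, and a triangularity argument combined with Theorem \ref{unique} for injectivity --- is the standard approach to such statements and is sound in outline.

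One step needs repair. You claim that for an \emph{arbitrary} linear ordering of $J$ the $(i,j)$-entry of $\varphi\bigl((\lambda_{rs})\bigr)$ equals $\lambda_{ij}$ plus a polynomial in those $\lambda_{rs}$ with $(r,s)$ \emph{preceding} $(i,j)$ in the chosen order. This is false in general: take $J=\{(1,2),(1,3),(2,3)\}$ with the lexicographic order, so $\varphi(\lambda)=\tilde x_{12}(\lambda_{12})\,\tilde x_{13}(\lambda_{13})\,\tilde x_{23}(\lambda_{23})$. Its $(1,3)$-entry is $\lambda_{13}+\lambda_{12}\lambda_{23}$, which depends on $\lambda_{23}$ even though $(2,3)$ comes \emph{after} $(1,3)$ lexicographically. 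The correct, order-independent statement is that for $(i,j)\in\pUP$ the entry $\varphi(\lambda)_{ij}$ equals $\pm\lambda_{ij}$ plus a polynomial in the $\lambda_{rs}$ with strictly smaller height $s-r<j-i$: contributions from products of two or more factors always split the height, and one checks that the mirror position $(\bar s,\bar r)$ of a factor $x_{rs}$ has the same height $s-r$, while in type $\fB_n$ the quadratic term of $x_{r,n+1}$ sits at $(r,\bar r)$ and cannot feed a chain ending at a position $(i,j)$ with $j<\bar i$. Inverting by increasing height then gives injectivity of $\varphi$ for \emph{every} linear order simultaneously, which is what the theorem actually asserts. With that correction your counting argument closes: surjectivity via straightening plus height-triangular injectivity yields $|U_J|=q^{|J|}$ and uniqueness of the ordered factorisation. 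The commutator-closure computations in types $\fB_n$ and $\fC_n$ that you flag as the main obstacle do work out --- condition (ii) of Definition \ref{Upattern} is exactly what absorbs the mirror by-products, while the anti-diagonal contributions either lie outside $\pUP$ (type $\fB_n$, $\fD_n$) or are covered by $\pUP=\UPC$ (type $\fC_n$) --- but they do have to be carried out case by case as you anticipate.
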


We briefly describe the monomial linearisation of the right regular representation of the group algebra $\C U$. We set $A.g = \pi(Ag)$, for $g\in \widetilde U$ and $A\in V=V_{\pUPs}$, where $Ag$ is  the ordinary matrix multiplication and again $\pi = \pi_{\pUPs}$. Then this defines an action of $\widetilde U$ on $V$ by vector space automorphisms. We fix once and for all a non-trivial linear character $\theta:\F_q \longrightarrow \C^*$ of the additive group of the field $\F_q$. With $V^* = \Hom_{\F_q}(V, \F_q)$ the space $\hat V = \Hom ((V,+),\C^*)$ of linear characters  of the additive group of $V$ is given as $\hat V = \{ \theta\circ \tau\,|\, \tau \in V^*\}$. From the action of $\widetilde U$ on $V$ we derive a permutation action $(\chi,g)\mapsto \chi.g$ of $\widetilde U$ on $\hat V$ defined by $\chi.g(A) = \chi(A.g^{-1})$ for $\chi\in\hat V, g\in\widetilde U$ and $A\in V$.

The coordinate functions $\epsilon_{ij}:V\longrightarrow \F_q: A\mapsto A_{ij}$ for $(i,j)\in \pUP$ are the basis elements  of $V^*$ dual to the natural basis of $V$ consisting of matrix units. Thus for every $\tau\in V^*$ we find a $B\in V$ such that $\tau= \sum_{(i,j)\in\pUPs}B_{ij}\epsilon_{ij}$. We then write $\tau = \tau_{_B}$ and define $\chi_{_B}\in \hat V$ to be $\theta\circ\tau_{_B}$.

In order to describe the action of $\widetilde U$ on $\hat V$ explicitly, we use the following standard bilinear form on matrices:

\begin{equation}\label{traceform}
\kappa: V_\square \times V_\square \rightarrow \F_q: (A, B)\mapsto tr(A^tB)=\sum_{(i,j)\in \square} A_{ij}B_{ij}=\sum_{(i,j)\in \supp A\cap \supp B} A_{ij}B_{ij}.
\end{equation}
Then  $\kappa|_{V_S\times V_S}$ is a non-degenerate symmetric bilinear form  for all $S\subseteq \square$. Moreover,
\[
\kappa(B^tA, C)=\kappa(A, BC)=\kappa(AC^t,B)
\quad\text{ for all }A,B, C\in V_\square.
\]
The linear character $\chi_{_A}$ for $A\in V$ is now given as
$$
\chi_{_A}=\theta\circ\kappa(A,-):V\longrightarrow \C^*:B\mapsto \theta(\kappa(A,B))\in \C^*.
$$
Moreover, for $g\in \widetilde U$ we have 
\begin{equation}\label{permacthatV}
\chi_{_A }.g (B) = \chi_{_A}(B.g^{-1}) = \theta\circ\kappa(A,B.g^{-1}) = \theta\circ\kappa(A.g^{-t},B),
\end{equation}
for all $B\in V$ and hence $\chi_{_A} .g = \chi_{A.g^{-t}}$.

The restriction $f$ of the map $\pi = \pi_{\pUPs}$ to $\widetilde U$ is a surjective right $1$-cocycle, that is it satisfies $f(gh) = f(g).h+f(h)$. Moreover its restriction to $U$ is bijective. We extend this map  by linearity to the respective group algebras and obtain a $\C$-linear map, which is not compatible with the action of the groups on its group algebras and their ``.''-action on $ \C \hat V$. In order to achieve this one has to deform the permutation action of $\widetilde U$  on $\hat V$ into a monomial action using the 1-cocycle  $f$:

\begin{Theorem}\label{monomial}[\cite{GJD}, 4.9].
For $g\in \widetilde U$ and $A\in V$ we define 
\[
\chi_{_A}g=z \chi_{_A}.g=z \chi_{_{A.g^{-t}}}=z {\chi_{\pi_{_{\pUPt}}(Ag^{-t})}},
\]
with $z=\theta(\kappa(A, \pi_{_{\pUPt}}(g^{-1})))=\chi_{_{A}}(f(g^{-1}))\in \C^*$. Extending this by linearity makes  $\C\hat V$ into a $\widetilde U$-module such that $f:\C \widetilde U \rightarrow \C\hat V$ is $\C \widetilde U$-linear. Moreover, the restriction of $f$ to $\C U$ defines a $\C U$-isomorphism from the right regular representation of $U$ onto $\C \hat V \cong \C^V$. \hfill$\square$
\end{Theorem}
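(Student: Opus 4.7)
My plan is to break the proof into three parts, each resting on the $1$-cocycle identity $f(gh) = f(g).h + f(h)$ recalled above from [GJD]: first to verify that the prescribed formula defines a right $\widetilde U$-module structure on $\C\hat V$, then that $f$ is equivariant, and finally to deduce the isomorphism statement for the restriction to $\C U$.

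For the module axiom, I would show $(\chi_A g)h = \chi_A(gh)$. Since the permutation action $\chi.g$ is already a right action of $\widetilde U$ on $\hat V$, the content reduces to checking that the scalars combine correctly. Expanding both sides via the definition of the new action, the equality becomes $\chi_A(f(g^{-1}))\cdot(\chi_A.g)(f(h^{-1})) = \chi_A(f((gh)^{-1}))$. Applying the $1$-cocycle to $(gh)^{-1} = h^{-1}g^{-1}$ yields $f(h^{-1}g^{-1}) = f(h^{-1}).g^{-1} + f(g^{-1})$, so multiplicativity of $\chi_A$ on sums in $V$ combined with $(\chi_A.g)(X) = \chi_A(X.g^{-1})$ closes the calculation.

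For the equivariance $f(gh) = f(g)h$, with $f(g) = \chi_{\pi(g)}$, the $1$-cocycle gives $\pi(gh) = \pi(g).h + \pi(h)$. Expanding $f(g)h = \chi_{\pi(g)}h$ using the monomial formula and viewing both sides as functions on $V$, I would use the identities $\kappa(X,YZ) = \kappa(XZ^t,Y)$ and $\kappa(A,\pi(X)) = \kappa(A,X)$ for $A\in V$ to rewrite everything in terms of $\kappa(\pi(g),\cdot)$; the scalar $z = \chi_{\pi(g)}(f(h^{-1}))$ then absorbs precisely the additive ``correction term'' $\pi(h)$ produced by the cocycle, and non-degeneracy of $\kappa|_{V\times V}$ identifies the resulting character with $\chi_{\pi(gh)}$.

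Finally, Theorem \ref{unique} provides a bijection $\pi|_U\colon U\to V$, so the $\C$-linear extension $f|_{\C U}\colon \C U\to\C\hat V$ is already a $\C$-vector-space isomorphism; combined with the equivariance just established, this upgrades automatically to a $\C U$-module isomorphism from the right regular representation of $U$ onto $\C\hat V\cong\C^V$. The main obstacle is the second step: one must verify that the specific scalar $z = \chi_A(f(g^{-1}))$ is exactly what is needed to cancel the additive shift $+\pi(h)$ forced by the cocycle. Once this matching is in place, both the module axiom and the isomorphism claim are essentially formal bookkeeping with the bilinear form $\kappa$ and the cocycle.
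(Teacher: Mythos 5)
Your verification of the module axiom (your first part) is fine: the cocycle law $f(h^{-1}g^{-1}) = f(h^{-1}).g^{-1} + f(g^{-1})$ together with multiplicativity of $\chi_A$ on the additive group $(V,+)$ gives precisely the scalar identity you need. The gap is in the second part, and it is not bookkeeping: it is a misidentification of the map $f$.

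The map $f:\C\widetilde U \to \C\hat V$ is the $\C$-linear extension of $\pi:\widetilde U\to V$, with target identified with $\C\hat V$ via the chain $\C V \cong \C^V = \C\hat V$ of Notation \ref{lide}. Under these identifications $f(g)$ corresponds to the basis element $\pi(g)\in V\subseteq \C V$, equivalently to the indicator function of $\{\pi(g)\}$ in $\C^V$, which is a full linear combination of all the characters $\chi_B$. It is \emph{not} the single character $\chi_{\pi(g)}$. With your reading $f(g)=\chi_{\pi(g)}$, equivariance actually fails: the monomial formula gives
\[
\chi_{\pi(g)}h = \theta\circ\kappa\big(\pi(g),\pi(h^{-1})\big)\,\chi_{\pi(\pi(g)h^{-t})},
\]
and in general $\pi\big(\pi(g)h^{-t}\big)\ne\pi(gh)$, because the cocycle reads $\pi(gh)=\pi(\pi(g)h)+\pi(h)$, i.e.\ with right multiplication by $h$, not by $h^{-t}$. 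No choice of scalar can repair this, since the character indices on the two sides belong to the distinguished basis $\hat V$ and do not match.

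The clean route (the one implicit in Remark \ref{star f}) is to verify instead that $f^*:\C^V\to\C^U$, $\tau\mapsto\tau\circ f$, intertwines, by evaluating on the basis $\hat V$. For $u\in U$ and $g\in\widetilde U$,
\[
\big(f^*(\chi_A)g\big)(u)=\chi_A\big(f(ug^{-1})\big)
=\chi_A\big(f(u).g^{-1}\big)\cdot\chi_A\big(f(g^{-1})\big)
=(\chi_A g)\big(f(u)\big)
=\big(f^*(\chi_A g)\big)(u),
\]
using the cocycle identity for $f(ug^{-1})$, multiplicativity of $\chi_A$ on $(V,+)$, and the definition of the monomial action. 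Since $f$ is bijective on $U$, $f^*$ is a linear isomorphism and hence a $\C U$-isomorphism, and $f=(f^*)^{-1}$ is the claimed equivariant isomorphism; the $\C\widetilde U$-linearity of $f$ is the same computation. Your third part then goes through, but it rests on this corrected version of the second.
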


\begin{Notation}\label{lide}
We use frequently the canonical $\C$-algebra isomorphism from $\C ^V$ to $\C V$ given by $\tau\mapsto \sum_{v\in V}\tau(v)v$. For $A\in V$ we may identify therefore
$\chi_{_{-A}} \in \C^V$ with $[A]:=\sum_{v\in V}\overline{ \chi_{_A}(v)} v\in \C V$, and set $[A]_{ij}=A_{ij}$ for $(i,j)\in \pUP$. Thus
$\hat V=\{[A]\,|\,A\in V\}$. 
 Let $A\in V, g\in \widetilde U$. Then by theorem \ref{monomial} we have $\chi_{_A}.g=\chi_{_B}$, where $B=\pi_{\pUPt}(Ag^{-t})$. Thus $[A].g=[\pi_{\pUPt}(Ag^{-t})]$.  \hfill$\square$
\end{Notation}

\begin{Remark}\label{star f}
 Using \ref{lide} it is not hard to check, that the inverse of the $U$-isomorphism $f:\C U_{\C U}\rightarrow \C\hat V\cong \C^V$ of \ref{monomial}  is given by $f^*:\tau \mapsto \tau\circ f \in \C^U\cong \C U$ for $\tau\in \C^V \cong \C V$. \hfill$\square$
\end{Remark}

\begin{Prop}\label{TruncatedColumnOperationG}[\cite{GJD}, 5.6].
Let $A\in V$ and let $\tilde x_{ij}(\alpha)\in U_N(q)$ with $1\leqslant i<j\leqslant N$ and $\alpha\in \F_q$. Then $[A].\tilde x_{ij}(\alpha)$ arises from $A$ by adding $-\alpha$ times column $j$ to column $i$ in $A$ and setting entries outside of $\pUP$ to zero. This action is called {\bf restricted column operation}. \hfill$\square$
\end{Prop}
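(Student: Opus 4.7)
The plan is to unwind the definition of the monomial $\widetilde U$-action on $\C\hat V$ given in Notation \ref{lide} and then perform the resulting matrix computation directly; no combinatorics of $\pUP$ is needed beyond the final application of $\pi_{\pUPs}$.

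First I would invoke Notation \ref{lide}: by construction
\[
[A].\tilde x_{ij}(\alpha)\;=\;\bigl[\pi_{\pUPs}\bigl(A\,\tilde x_{ij}(\alpha)^{-t}\bigr)\bigr],
\]
so everything reduces to computing the ordinary matrix product $A\,\tilde x_{ij}(\alpha)^{-t}$ and then throwing away entries outside $\pUP$. Since $\tilde x_{ij}(\alpha)=1+\alpha e_{ij}$, its inverse in $\widetilde U$ is $1-\alpha e_{ij}$, and transposing gives $\tilde x_{ij}(\alpha)^{-t}=1-\alpha e_{ji}$. Hence
\[
A\,\tilde x_{ij}(\alpha)^{-t}\;=\;A\;-\;\alpha\,A e_{ji}.
\]

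Next I would analyse the matrix $Ae_{ji}$. A direct entry computation gives $(Ae_{ji})_{k\ell}=A_{kj}\,\delta_{\ell i}$, so $Ae_{ji}$ is the matrix whose $i$-th column equals the $j$-th column of $A$ and whose other columns vanish. Consequently $A-\alpha A e_{ji}$ is exactly the matrix obtained from $A$ by adding $-\alpha$ times column $j$ to column $i$, leaving every other column of $A$ unchanged. Applying $\pi_{\pUPs}$ then sets to zero any entry lying outside of $\pUP$, which is precisely the procedure described as a restricted column operation.

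I expect no serious obstacle here: the only things to check are that $\tilde x_{ij}(\alpha)^{-1}=\tilde x_{ij}(-\alpha)$ (which follows from $e_{ij}^2=0$ because $i<j$) and the straightforward coordinate calculation $(Ae_{ji})_{k\ell}=A_{kj}\delta_{\ell i}$. The only point that requires attention is not confusing the two transposes/inverses — the action involves $g^{-t}$, not $g^{-1}$ or $g^t$ — so one must explicitly record that $(1-\alpha e_{ij})^{t}=1-\alpha e_{ji}$ before right-multiplying $A$.
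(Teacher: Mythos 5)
Your proof is correct. Note that the paper itself does not reprove this proposition: it is imported as [GJD, 5.6] and stated with a closing box but no argument, so there is no in-paper proof to compare against. Your computation is the natural direct verification. The key steps — that $\tilde x_{ij}(\alpha)^{-1}=\tilde x_{ij}(-\alpha)$ because $e_{ij}^2=0$ for $i<j$, that transposing gives $\tilde x_{ij}(\alpha)^{-t}=1-\alpha e_{ji}$, that $(Ae_{ji})_{k\ell}=A_{kj}\delta_{\ell i}$ so $Ae_{ji}$ copies column $j$ of $A$ into column $i$, and finally that $\pi_{\pUPs}$ truncates to $\pUP$ — are all exactly right, and you correctly work from the formula $[A].g=[\pi_{\pUPt}(Ag^{-t})]$ in Notation \ref{lide} rather than accidentally using $g^{-1}$ or $g^{t}$. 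This is almost certainly the same argument given in [GJD, 5.6]; there is no gap.
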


\begin{Remark}\label{4action}
We shall picture $[A]$ for  $A\in V$ by a triangle $\pUP$ filled with elements of $\F_q$. 
By lemma \ref{rootsubgroupsU} the elements of $X_{ij}$ with $(i,j)\in \pUP$ can be written as products of elements of certain subgroups $\tilde X_{st}\in U_N(q)$ with $(s,t)\in \tilde \Phi^+$. Combing this with
\ref{TruncatedColumnOperationG} we can illustrate the ``$.$''-action of the root subgroups $X_{ij}$  of $U$ on $\hat V$ in theorem \ref{monomial} by the figures below, where $\tilde n=n+1$ for type $\fB_n$, $\tilde n=n$ otherwise, $\epsilon=-1$ for type $\fC_n$ and $\epsilon=1$ otherwise:
\end{Remark}

\begin{figure}[H]
\begin{center}
    \subfigure{\includegraphics[width=0.43\textwidth]{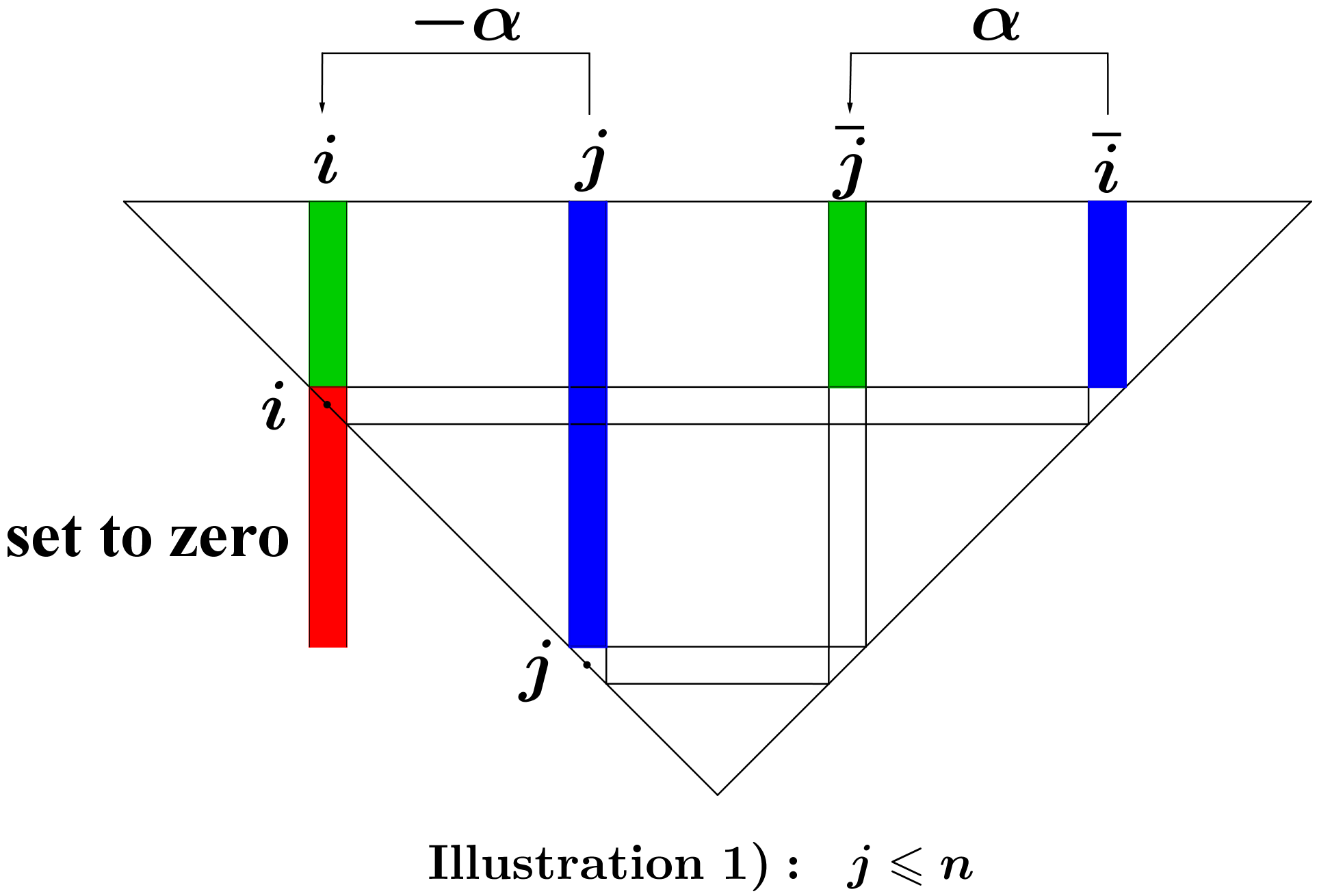}}
\quad
    \subfigure{\includegraphics[width=0.43\textwidth]{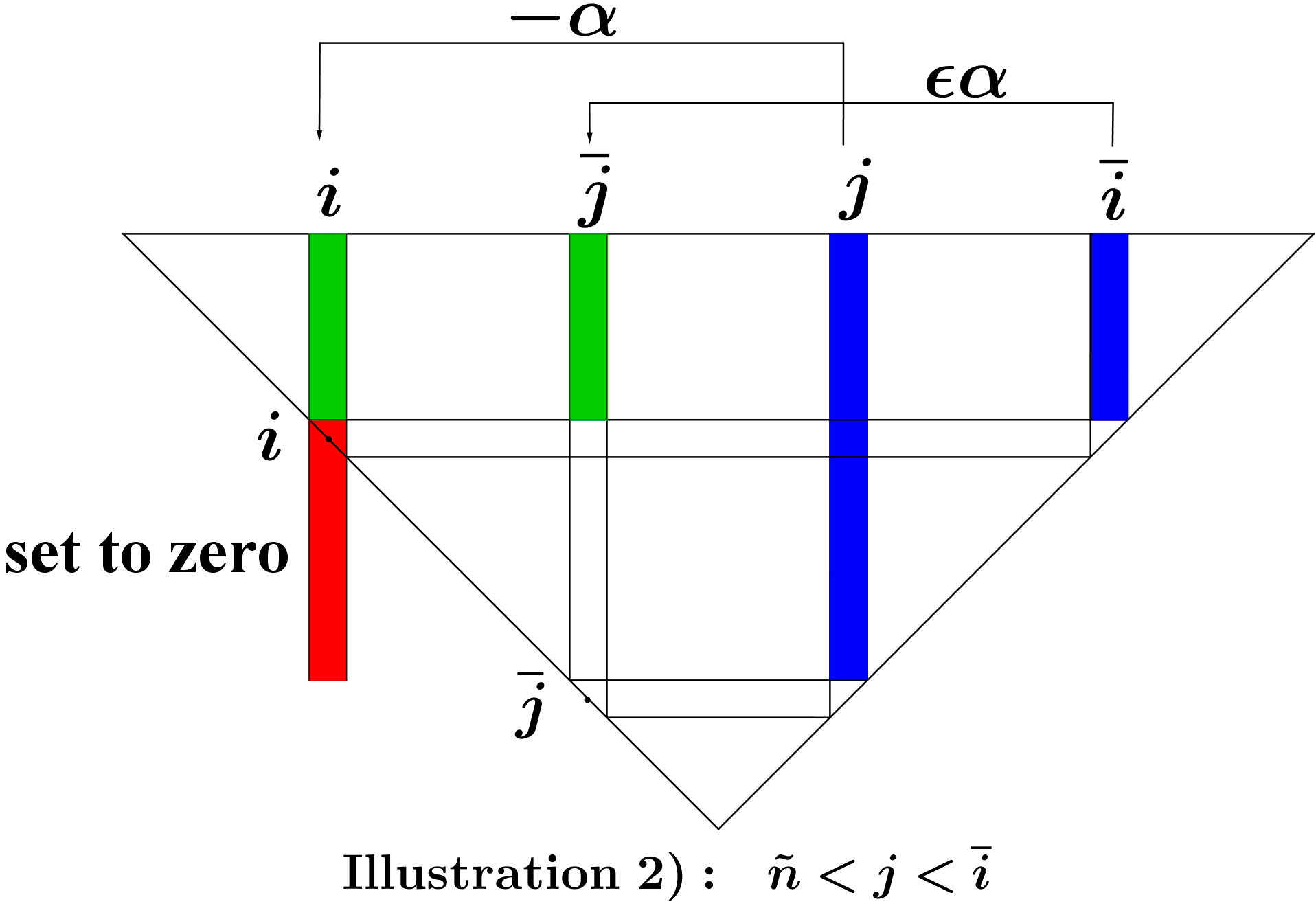}}
\end{center}
\end{figure}
\vspace{-.8cm}
\begin{figure}[H]
\begin{center}
    \subfigure{\includegraphics[width=0.42\textwidth]{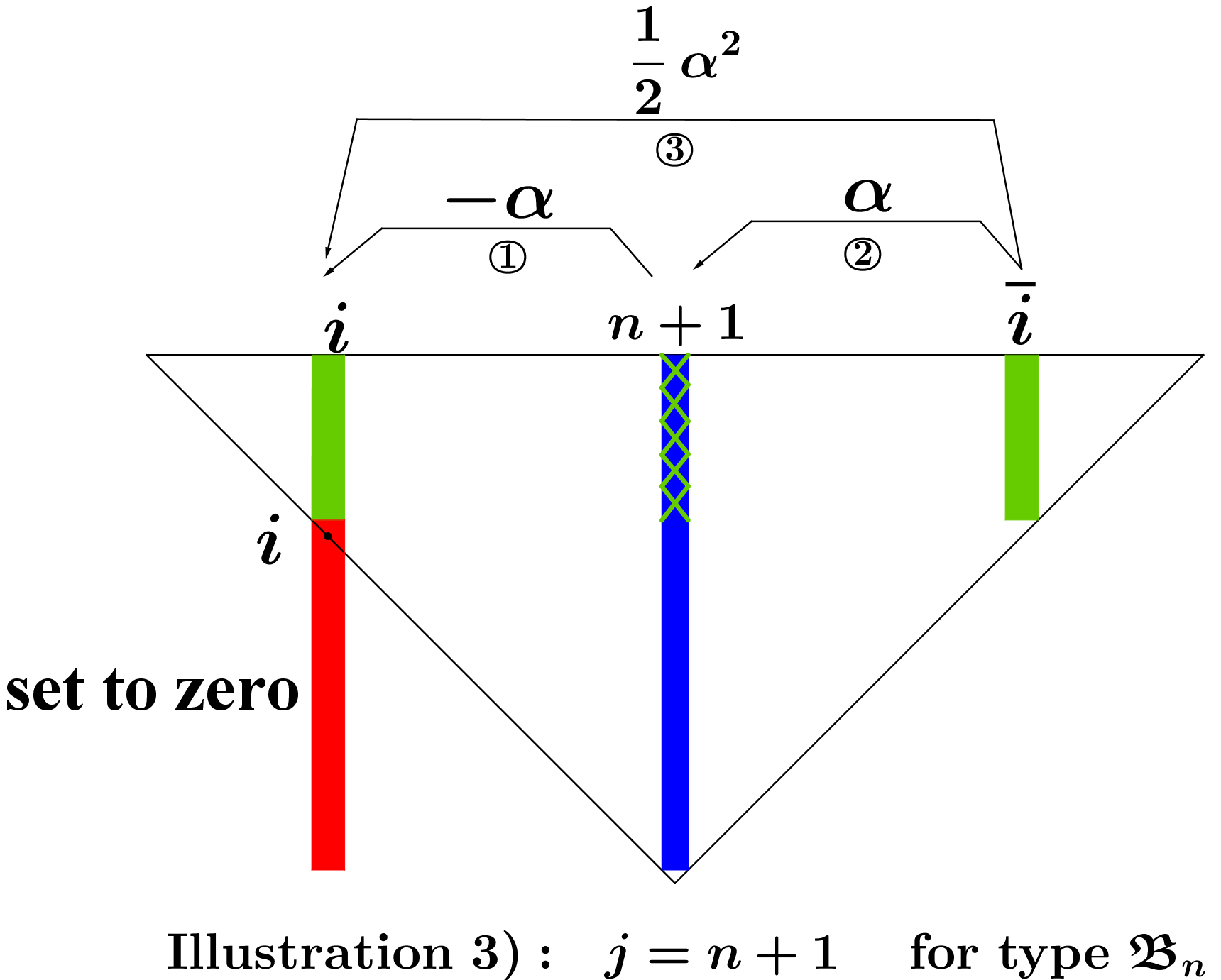}}
\quad
    \subfigure{\includegraphics[width=0.422\textwidth]{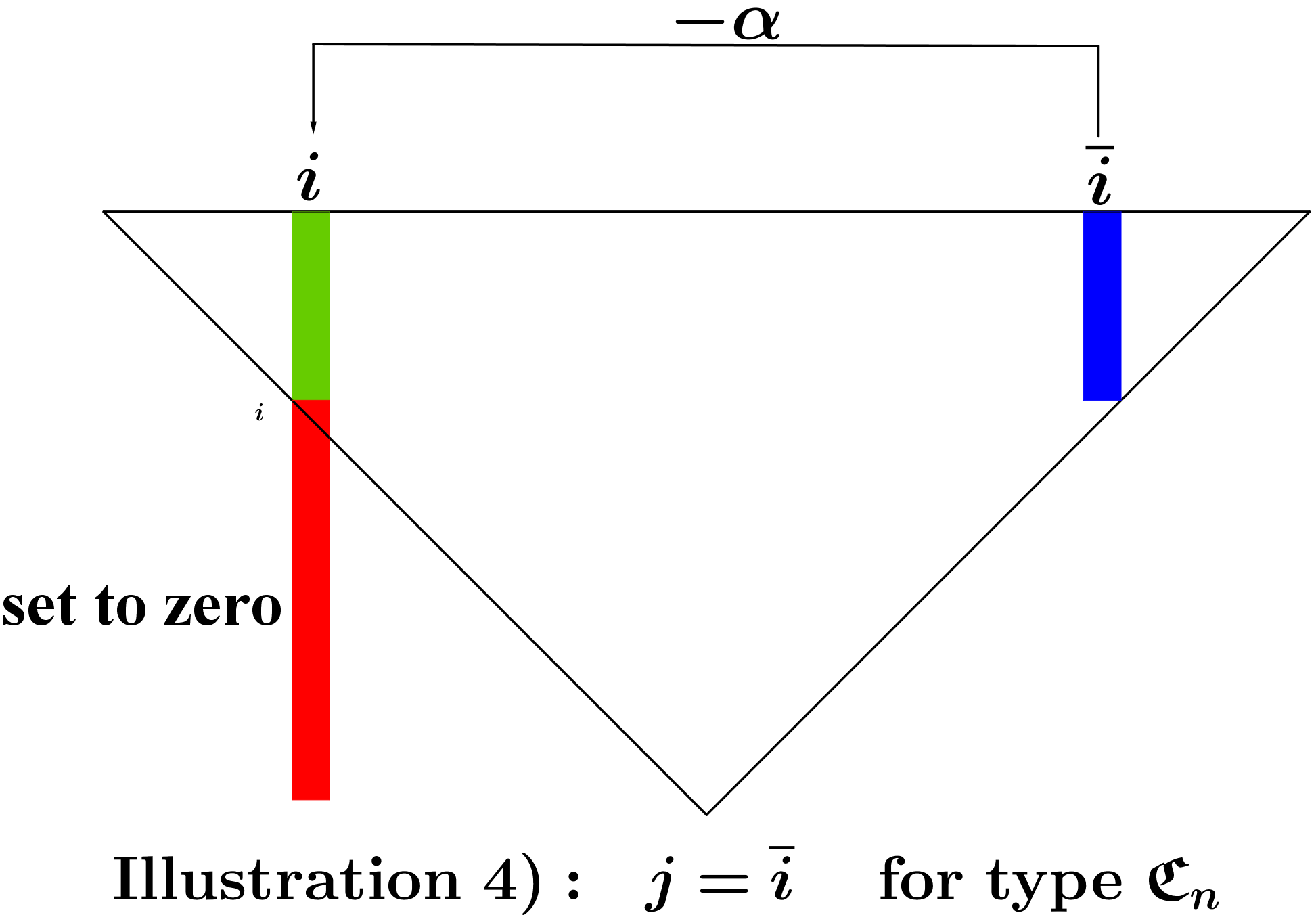}}
\end{center}
\end{figure}
\vspace{-1.5cm}
\begin{equation} \label{illTruncatedColumnOperationG}
\end{equation}

\begin{Remark}
All the illustrations above  are self-explaining, in view of lemma \ref{rootsubgroupsU} and proposition \ref{TruncatedColumnOperationG}, except maybe no. 3). Here we need to stick to the order given by \[x_{i, n+1}(\alpha)=\tilde x_{i,n+1}(\alpha)\tilde x_{n+1,\, \bar i}(-\alpha)\tilde x_{i \bar i} (-\frac{1}{2}\alpha^2),\]
labelled by \textcircled{1}, \textcircled{2} and \textcircled{3} in the illustration. \hfill$\square$
\end{Remark}

\begin{Remark}\label{actbyseq}
For $A\in V$ let $
\cO_A=\{[A].u\,|\,u\in U\}
$
be the {\bf $U$-orbit}
 of $[A]\in \hat V$ under the ``.''-action of $U$.
Note that in view of  \ref{rootsubgroupsU},  every $[B]\in \cO_{_A}$ ($A\in V$) can be obtained from $[A]$ by a sequence of restricted column operations in \ref{4action}. \hfill$\square$
\end{Remark}

Since for $1\leqslant i<j \leqslant N, A\in V$ and $\alpha\in \F_q$, the matrix $\tilde x_{ij}(\alpha) A$ is obtained by  adding $\alpha$ times row $j$ to row $i$ in $A$, the vector space $V=V_{\pUPs}$ is $\widetilde U$-invariant under left multiplication. 
Since $\pUP\subseteq \Phi^+$ is closed, $\widetilde U_{\pUPs}$ is a pattern subgroup of $\widetilde U$ with associated Lie algebra $V_{\pUPs} =\Lie(\widetilde U_{\pUPs})$ and right and left 1-cocycle $\pi_{\pUPt}=f:\widetilde U_{\pUPs}\rightarrow V_{\pUPs}:u\mapsto u-1$. By the left sided version of theorem \ref{monomial} $\C \hat V_{\pUPs}$ becomes a monomial left $\C \widetilde U_{\pUPs}$-module with monomial basis $\hat V_{\pUPs}$. The action of $\tilde x_{ij}(\alpha),\, (i,j)\in \pUP, \, \alpha\in \F_q$ on $\hat V_{\pUPs}$ is given by
\begin{equation}
\tilde x_{ij}(\alpha)[A]=\theta(\alpha A_{ij})[B]
\end{equation}
 where $B$ is obtained from $A$ by a {\bf restricted row operation}  which adds $-\alpha$ times row $i$ to row $j$ and projects the resulting matrix into $V_{\pUPs}$.

\begin{equation}\label{rowformula}
 \includegraphics[width=0.45\textwidth]{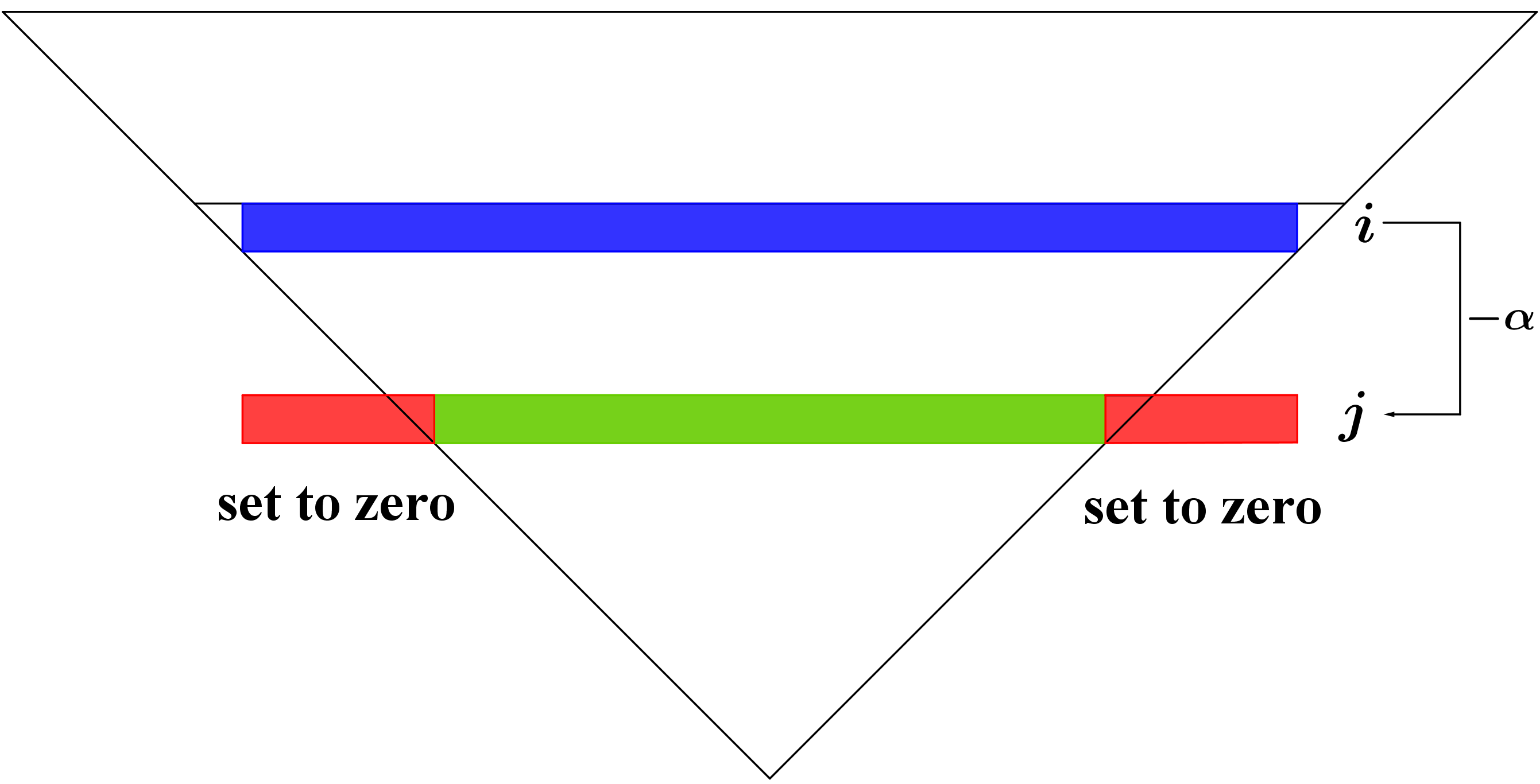}
\end{equation}
 
In general, the monomial left $\widetilde U_{\pUPs}$-action on $\hat V_{\pUPs}$ does not commute with the monomial right $U$-action, but there are special cases, where this holds.

 \begin{Notation}We set 
 \[
 \tril=\{(i,j)\in \pUP\,|\,j\leqslant \tilde n\}\quad \text{ and } \quad
 \trir=\{(i,j)\in \pUP\,|\,j>\tilde n\}.
 \]
Again $\tilde n=n \text{ for type } \fC_n, \fD_n\text{ and } \tilde n=n+1\text{ for type } \fB_n$. Note that $\pUP=\tril\quad\dot{\cup}\quad \trir$, and $\CC\subseteq \trir$ for type $\fC_n$.  \hfill$\square$
 \end{Notation}
 
\begin{Defn}
Suppose $A\in V$. We call $(i,j) \in \pUP$ a {\bfseries main condition} of $A$ (or of $[A]$) if $A_{ij}$ is the rightmost non-zero entry in the $i$-th row. We call a main condition $(i,j)$ {\bfseries left main condition} if $(i,j) \in \tril$,  and {\bfseries right main condition} if $(i,j)\in \trir$. Let
\begin{eqnarray*} 
\mc(A) & =& \big\{ (i,j) \in \pUP \; \big| \; \text{$(i,j)$ is a main condition of $A$} \big\}, \\
\lmc(A) & =& \big\{ (i,j) \in \pUP \; \big| \; \text{$(i,j)$ is a left main condition of $A$} \big\} \subseteq \tril, \\
\rmc(A) &= &\big\{ (i,j) \in \pUP \; \big| \; \text{$(i,j)$ is a right main condition of $A$}\big\}\subseteq \trir .
\end{eqnarray*} 
Note that we have $\mc(A)=\lmc(A)\;\dot\cup\; \rmc(A)$, and $\mc(A)=\mc(B)$ for all $[B]\in \cO_{_A}$ in view of remark \ref{actbyseq}.\hfill$\square$
\end{Defn}

\begin{Defn}
Let $[A]\in \hat V$.  We call $[A]$ a {\bfseries staircase character},  if the elements of $\mc(A)$  lie in different columns and adopt a similar notation for $U$- and $U_N(q)$-orbits $\cal{O}$, and for $U$- and $U_N(q)$-orbit modules $M$.\hfill$\square$
\end{Defn}

\begin{Defn}Let $[A]\in \hat V$ be a staircase character. 
\begin{itemize}
\item [1)]  The positions $(i, \bar j)$ with $(i,j)\in \rmc(A)$ are called {\bf minor conditions} of $A$ (or $[A]$, or $\cO_A$) and we denote the set of minor conditions of $[A]$ by $\minc(A)$ (or $ \minc(\cO_A)$). Obviously those depend only on $\rmc(A)$ and hence the minor conditions coincide for all $[B]\in \C \cO_{A}$ justifying the notation. Note that, $ (i, \bar j)\in \tril$ in types $\fB_n, \fD_n$, 	and for type $\fC_n$, $ (i, \bar j)$ is either in $ \tril$	or in $\diag$ if  $(i,\bar i )\in \rmc(A)$ and $j=\bar i$.		
\item [2)] A position $(i,j)\in \tril$ is  called {\bf supplementary condition} for $A$ or $\cO_{_A}$,  if $(i,j)$ is on the left of some minor condition or some left main condition of $A$, in the same column as some minor condition of $A$ and is not itself a minor or main condition.  For all Lie types the set of supplementary conditions is denoted by $\suppl (A)$. Note that $\suppl(A)\subseteq \tril\setminus\{\text{column $\tilde n$}\}$ where $\tilde n=n$ in types $\fC_n, \fD_n$ and  $\tilde n=n+1$ for type $\fB_n$. 
\item[3)] The {\bf core} of $A$ or $\cO_A$ is defined to be 
\[
\core( A)=\begin{cases}
\main(A)\cup \minc(A)\cup \suppl(A)  & \text{ if  $U$ is of type $\fB_n$ or $\fD_n$}\\
\main(A)\cup \suppl(A)  & \text{ if  $U$ is of type $\fC_n$}.
\end{cases}
\]
Note that $\core( A)$ is determined by  $\main(A)$.
\item [4)] We define the {\bf verge} of $A$ to be
$
\verge(A)=\sum_{(i,j)\in \main(A)} A_{ij}e_{ij}.
$
The linear character $[A]\in \hat V$ is called {\bf verge character}, if $A=\verge(A)$.
\item [5)]  $[A]\in \hat V$ is called {\bf core character}, if $\main(A)\subseteq \supp(A)\subseteq \core (A)$. \hfill$\square$ %Note that $A_{ij}=0$ is possible for $(i,j)\in\minc(A)\cup \suppl(A)$.
\end{itemize}
\end{Defn}

\begin{Defn}
Let $[A]\in \hat V$ be a staircase character, and let $(i,j)\in \pUP$. Then $(i,j)$ is called a {\bf place} of $A$ if $(i,j)\notin \core(A)$ and it is to the left of a main condition. 
%Thus $(i,j)$ is either a position between a minor and a main condition in row $i$, or  to the left of a minor  condition or  left main condition in row $i$ of $A$ and column $j$ does not contain a minor condition, or in case  of type $\fC_n$, position $(i,j)$ is a minor condition. 
Note the set of places of $A$ is determined by $\main(A)$ uniquely. It is hence denoted by $\Pl(A)=\Pl(\cO_A)=\Pl(\main(A))$.\hfill$\square$
\end{Defn}

\begin{Defn}\label{lowerhook}
%Let $[A]\in \hat V$ be a staircase character.
For $[A]\in \hat V$ and $(i,j)\in \pUP$ we define:
\begin{eqnarray*}
\Arm:\quad\cA(i,j)=\{(\bar j, a)\in \UP\};\quad\quad
\Leg:\quad\cL(i,j)=\{(a,j)\in \UP\,|\,a>i\}
\end{eqnarray*}
\[\Limb^\circ(A)=\bigcup\nolimits_{(i,j)\in \mc(A)}\cA(i,j)\cup \cL(i,j)\]
%Moreover we define for any $(a,b) \in \mc(A)$ the the {\bf reduced leg}  $\cL(a,b)^\circ$ to be obtained by removing from $\cL(a,b)$ all positions which are contained in an arm of $A$ attached to a right main condition. Then
%\[
%\Limb(A) = \bigcup\nolimits_{(i,j)\in \mc(A)}\cA(i,j)\cup \cL(i,j) = \bigcup\nolimits_{(i,j)\in \mc(A)}\cA(i,j)\cup \cL(i,j)^\circ
%\] 
%with the second union being disjoint. In [\cite{GJD}] we defined a bijection between $\Limb(A)$ and $\Pl(A)$.
In  type $\fC_n$, we set $\fC(A)=\{(\bar j, j)\in \CC\,|\, (i,j)\in \rmc(A)\cap \UP\}.$ In addition, we define
\[
\Limb(A)=\begin{cases}\Limb^\circ(A), &\text{ if } U \text{ is of type } \fB_n \text{ or } \fD_n,\\ \Limb^\circ(A)\,\dot \cup\, \fC(A), &\text{ if } U \text{ is of type } \fC_n, \end{cases}
\]
and $J(A)=\pUP\setminus \Limb(A)$. \hfill$\square$
\end{Defn}

Here is an illustration of  $\cL(i,j)$ and $\cA(i,j)$ with $(i,j)\in \mc(A)\cap \UP$, where $M$ stands for main and $m$ for minor condition: 
\begin{equation}\label{lowerhookpic}
\includegraphics[width=0.436\textwidth]{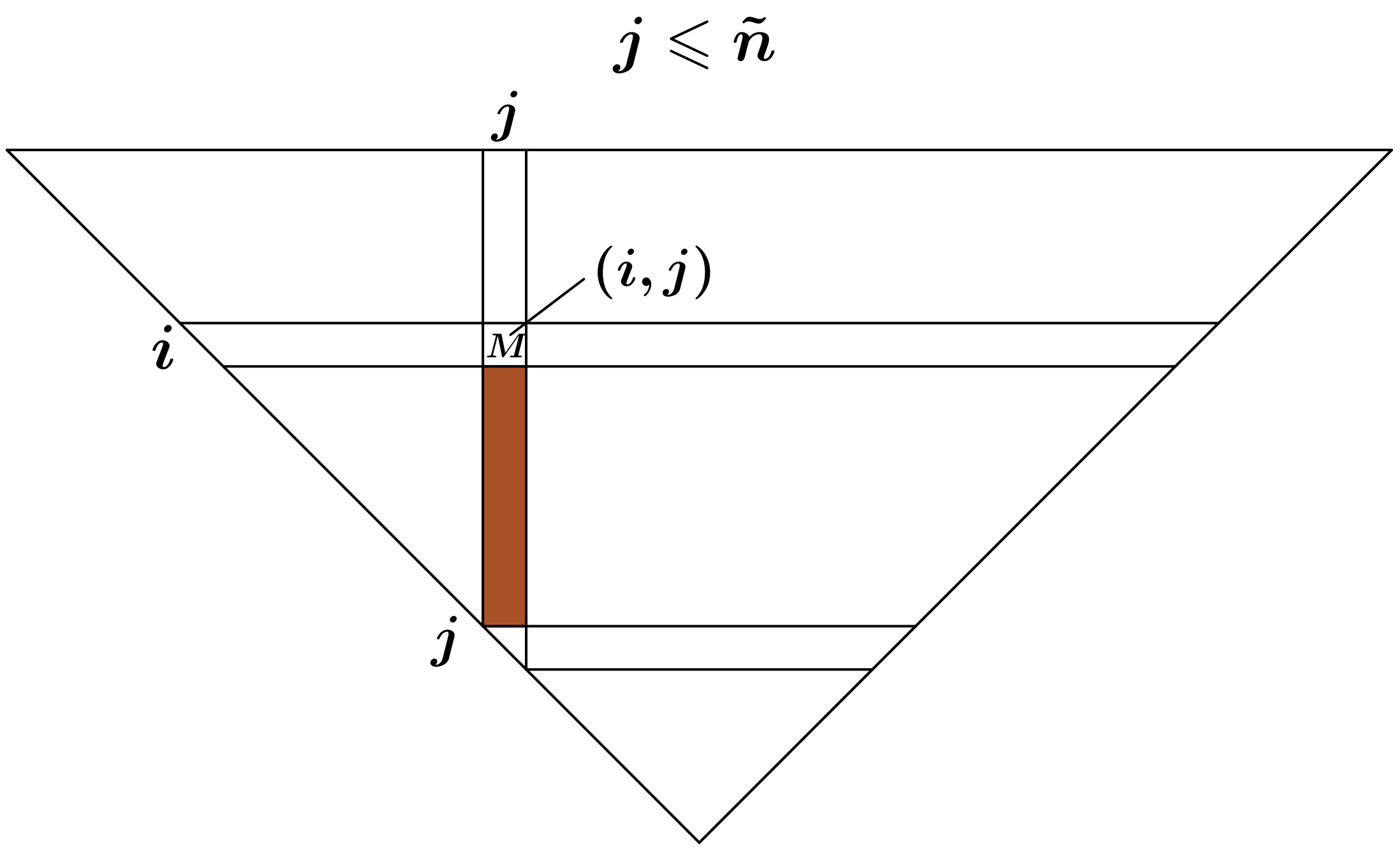}
\quad
\includegraphics[width=0.47\textwidth]{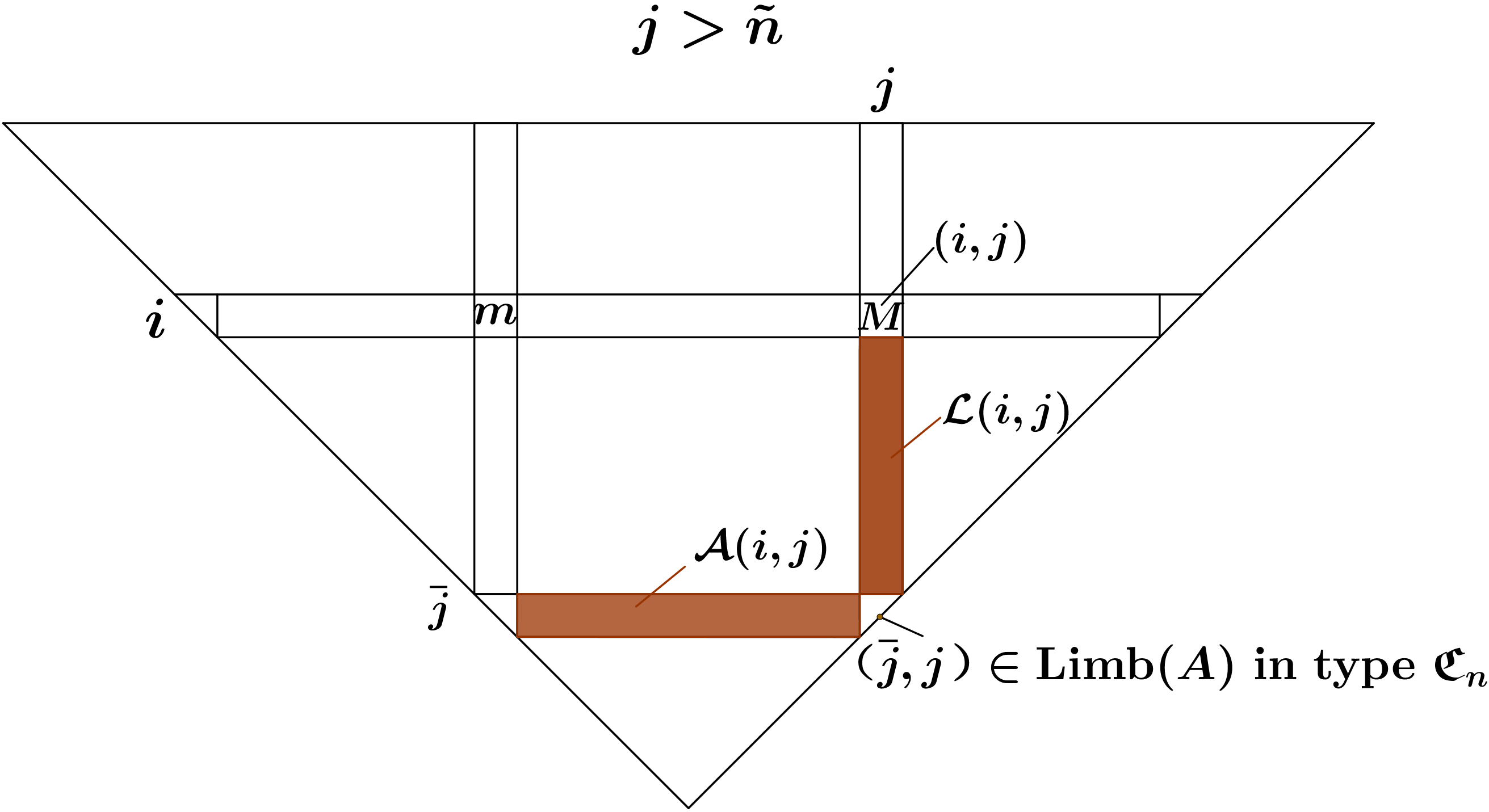}
\end{equation}
 
 %%%%%%%%%%%%%%%%%%%%%%%%%%%%%%%%%%%%
 \iffalse
\begin{Lemma}\label{def of J(A)}\label{J in stab}
Let $[A]\in \hat V$ be be a staircase character. Then $J(A)= \pUP \setminus \Limb(A)$ is a closed subset of $\pUP$ and hence $U_{J(A)}=\{u\in U\,|\, u_{ij}=0 \text{ for all }(i,j)\in \Limb(A)\}$ is a pattern subgroup of  $U$. Moreover if in addition $[A]$ is a verge character, then $U_{J(A)} =  \stab_{U}(A)$.
\end{Lemma}

\begin{Theorem}
Let $[A]\in \hat V$ be staircase. Then for $\Lambda:\Pl(A)\longrightarrow \F_q: (i,j)\mapsto \lambda_{ij}$ there exists precisely one $[B]=[B(\Lambda)]\in \cO_A$ with $B_{ij}=\lambda_{ij}$ for all $(i,j)\in \Limb(A)$. Moreover
$
\cO_{A}=\{[B(\Lambda)]\,|\,\Lambda:\Pl(A)\longrightarrow \F_q\}.
$
In particular, $\cO_A$ contains precisely one core character $[A_0]=[B(\Lambda_0)]$ with $\Lambda_0: \Pl(A)\longrightarrow \F_q: (i,j)\mapsto 0$ for all $(i,j)\in \Pl(A)$. As a consequence 
$
|\cO_A|=q^{|\Pl(A)|}=q^{|\Limb(A)|}=q^{|\Limb(\verge(A))|}=[U:U_{J(A)}].
$
\end{Theorem}
\fi
%%%%%%%%%%%%%%%%%%%%%%%%%%%%%%%%%%%%%

\section{Left $U$-action and separated main conditions}\label{secleftu}
In [\cite{GJD}] we have seen that every orbit module contained in $\C \hat V$ is isomorphic to some staircase orbit module. Since $\C \hat V$ is isomorphic to the right regular representation $\C U_{\C U}$ we conclude, that every irreducible $\C U$-module occurs as irreducible constituent in some staircase orbit module. In an analogous construction for $\widetilde U=U_N(q)$ (done in [\cite{andre}] and [\cite{yan}]), all orbits modules are as well isomorphic to staircase ones and all orbit modules are either isomorphic or afford orthogonal characters. We would like to have an analogous statement for our orbit modules in $\C \hat V\cong \C U_{\C U}$. Unfortunately this is not true in general, there exist non-isomorphic staircase orbit modules, which do have irreducible constituents in common. However, there exists a subclass of staircase orbits, called {\bf main separated}, satisfying that their orbit modules are either isomorphic or afford orthogonal characters, provided the main conditions are contained in $\UP$. Moreover, every irreducible $\C U$-module is constituent of one of those, if $U$ is of type $\fB_n$ or $\fD_n$. 
%{\color{red} Check type C.}
%We start by defining orbits with separated main conditions:
That main conditions are separated means that we have no main condition on arm $\cA(i,j)$ in illustration \ref{lowerhookpic} (where $i<\bar j$). We do, however allow position $(\bar j,j)$ to be a main condition, if column $j$ has no main condition $(i,j)$ with $i<\bar j$. More precisely: 

\begin{Defn}
 The the main conditions $\main(A)$ of a character $[A]\in \hat V$  are called {\bf separated} if %$[A]$ is a staircase character and 
${\Limb}(A)\cap \main(A)=\emptyset$. If so, we call $[A]$ and $\cO_A$ {\bf main separated}.\hfill$\square$
\end{Defn}

 Note, that if $[A]$ is main separated, then $[A]$ is staircase as well, since a character $[A]$ which is not staircase, contains at least one column with  two main conditions, such that the lower one is on the leg of the higher one or is on the anti-diagonal with the higher one being a right main condition. 
 
As a first application we have the following result, which later on will turn out to be very useful:

\begin{Lemma}\label{HookArmsZero} Let $[A]\in\hat V$ be main separated and let $1\leq i<\tilde n$ be such that column $\bar i$ contains a main condition $(k,\bar i)$ with $k< i$.   Then for all $[B]\in \cO_A$  the $i$-th row of $B$  is a zero row.\hfill$\square$
\end{Lemma}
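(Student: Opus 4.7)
The plan is to first show the $i$-th row of $A$ itself is zero by an obstruction argument, and then to propagate this to the whole orbit using the column-operation description of the right $U$-action.

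First I would locate the positions in row $i$ inside $\Limb(A)$. Since $i<\tilde n$, we have $\bar i>\tilde n$, so $(k,\bar i)\in\rmc(A)$; moreover $k<i$ gives $\bar i<\bar k$, and together with $k<\bar i$ this places $(k,\bar i)\in\rmc(A)\cap\UP$. From Definition \ref{lowerhook} the arm is
\[
\cA(k,\bar i)=\{(\bar{\bar i},a)\in\UP\}=\{(i,a)\in\UP\},
\]
i.e.\ exactly the row-$i$ positions lying in $\UP$. In types $\fB_n$ and $\fD_n$, where $\pUP=\UP$, every position of $\pUP$ in row $i$ is contained in $\cA(k,\bar i)\subseteq\Limb^\circ(A)=\Limb(A)$. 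In type $\fC_n$, the only additional row-$i$ position in $\pUP=\UPC$ is the anti-diagonal entry $(i,\bar i)\in\CC$; and because $(k,\bar i)\in\rmc(A)\cap\UP$, the definition of $\fC(A)$ puts $(i,\bar i)=(\bar{\bar i},\bar i)\in\fC(A)\subseteq\Limb(A)$. Thus, in every type, every row-$i$ position of $\pUP$ lies in $\Limb(A)$.

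Next I would deduce that row $i$ of $A$ is zero. Suppose for contradiction that $A_{ij}\ne 0$ for some $(i,j)\in\pUP$, and choose such $j$ maximal. Then $(i,j)$ is the rightmost non-zero entry in row $i$, so by definition $(i,j)\in\mc(A)$. But the previous paragraph shows $(i,j)\in\Limb(A)$, contradicting the main-separation hypothesis $\mc(A)\cap\Limb(A)=\emptyset$. Hence $A_{ij}=0$ for every $(i,j)\in\pUP$, i.e.\ the $i$-th row of $A$ vanishes.

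Finally I would propagate this to the orbit. By Remark \ref{actbyseq}, every $[B]\in\cO_A$ is obtained from $[A]$ by a sequence of restricted column operations, each of which (via Proposition \ref{TruncatedColumnOperationG} applied to the $\tilde x_{st}(\alpha)$-factors in the decompositions of the $x_{ij}(\alpha)$ in Definition \ref{rootsubgroupsU}) replaces some entry $A_{rs}$ by $A_{rs}-\alpha A_{rt}$ and then projects into $V_{\pUPs}$. If row $r$ is already zero then both $A_{rs}$ and $A_{rt}$ vanish, so the new entry is zero; the projection can only kill entries and never create them. Applied to $r=i$ inductively along the sequence, row $i$ remains zero at every step, so $B$ has zero $i$-th row for every $[B]\in\cO_A$. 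No step looks like a serious obstacle; the only subtlety is the type $\fC_n$ case, handled precisely by the extra summand $\fC(A)$ in $\Limb(A)$, which is exactly why that term is built into the definition.
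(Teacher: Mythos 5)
Your proof is correct and follows essentially the same approach as the paper's: identify row $i\cap\UP$ as the arm $\cA(k,\bar i)$, use main separation to conclude that row $i$ contains no main condition and hence is a zero row of $A$, and propagate this to the whole orbit via the column-operation description of the $U$-action. The only differences are cosmetic — the paper rules out $(i,\bar i)$ as a main condition via the staircase property (two main conditions would sit in column $\bar i$) while you instead observe $(i,\bar i)\in\fC(A)\subseteq\Limb(A)$ directly, and the paper invokes the already-noted fact $\mc(A)=\mc(B)$ for $[B]\in\cO_A$ where you re-derive the needed consequence by hand.
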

\begin{proof} Under the assumption of the lemma $\text{row } i\cap \UP$ of $A$ is the arm $\cA(k,\bar i)$. Since $k<i$, position $(i,\bar i)$ cannot be a main condition, since $[A]$ is staircase. Since $[A]$ is main separated, there can be no main condition in row $i$ of $B$ for any $[B]\in\cO_A$ and the result follows.
\end{proof}

By general theory, every $U$-linear map from any right ideal $I$ of $\C U$ into $\C U$ is obtained by left multiplication $\lambda_x: I\longrightarrow \C U: y\mapsto xy$ by some $x\in \C U$, since $\C U$ is a self-injective algebra. We may use the right $\C U$-module isomorphism $f: \C U\longrightarrow \C \hat V\cong \C^V$ of \ref{monomial} and its inverse $f^{-1}=f^*$ of \ref{star f} to induce  a right $U$-linear left action $\lambda_x$ for $x\in U$ on $\C \hat V$ through multiplication of $x$ on $\C U$. We obtain:
\begin{Lemma}\label{leftactionall}
Let $x\in U, A\in V$. Then, identifying  $\C U_{\C U}$ and $\C \hat V$ by $f$ and $f^*= f^{-1}$ we have:
\begin{equation}\label{leftmult1}
\lambda_x [A]=\sum\nolimits_{u\in U}{ \theta\circ \kappa(-x^{-t}A, u)}\pi(u).
\end{equation}
\end{Lemma}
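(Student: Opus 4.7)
The plan is to unwind the definitions directly: translate $[A]$ back to $\C U$ via $f^{-1}$, perform the left multiplication by $x$ inside $\C U$, and transport the result back to $\C\hat V$ via $f$. The calculation is essentially mechanical; the main point to get straight is the bookkeeping between the identifications $\C V \cong \C^V \cong \C\hat V$, in particular the sign convention in Notation \ref{lide} which identifies $[A]$ with the character $\chi_{-A}$. The final simplification then relies on only two elementary properties of the trace form $\kappa$ recorded in (\ref{traceform}).

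First I would compute $f^{-1}[A]\in\C U$ explicitly. By Remark \ref{star f}, $f^{-1}=f^{*}$ sends $[A]$ to the function on $U$ whose value at $u\in U$ is $[A](f(u)) = \chi_{-A}(\pi(u)) = \theta(-\kappa(A,\pi(u)))$, since the restriction of $f$ to $U$ agrees with $\pi$. Under the identification $\C^{U}\cong\C U$ this reads
\[
f^{-1}[A] \;=\; \sum_{u\in U}\theta\bigl(-\kappa(A,\pi(u))\bigr)\,u.
\]

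Left-multiplying by $x$ and applying $f$ then gives
\[
\lambda_x[A] \;=\; f\bigl(x\cdot f^{-1}[A]\bigr) \;=\; \sum_{u\in U}\theta\bigl(-\kappa(A,\pi(u))\bigr)\,\pi(xu),
\]
where each $\pi(xu)\in V$ is viewed as a basis element of $\C V\cong\C\hat V$. Re-indexing by $v:=xu$ (a bijection of $U$ onto itself) so that $u=x^{-1}v$ puts this into the form
\[
\lambda_x[A] \;=\; \sum_{v\in U}\theta\bigl(-\kappa(A,\pi(x^{-1}v))\bigr)\,\pi(v).
\]

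To finish, I would apply two facts about $\kappa$. Because $A\in V_{\pUPs}$, the defining formula of $\kappa$ in (\ref{traceform}) gives $\kappa(A,B) = \kappa(A,\pi(B))$ for every $B\in V_\square$, so that $\kappa(A,\pi(x^{-1}v)) = \kappa(A,x^{-1}v)$. The adjointness relation $\kappa(A,BC) = \kappa(B^{t}A,C)$ from (\ref{traceform}) then converts this into $\kappa(x^{-t}A,v)$. Combining,
\[
\lambda_x[A] \;=\; \sum_{v\in U}\theta\bigl(-\kappa(x^{-t}A,v)\bigr)\,\pi(v) \;=\; \sum_{u\in U}\theta\circ\kappa(-x^{-t}A,u)\,\pi(u),
\]
which is the asserted identity. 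There is no real obstacle; the only thing one might initially trip over is the sign introduced by the convention $[A]\leftrightarrow\chi_{-A}$, which is precisely what produces the $-x^{-t}A$ in the final answer.
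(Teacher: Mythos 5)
Your proof is correct and follows essentially the same route as the paper's: translate $[A]$ to $\C U$ via $f^{-1}=f^{*}$, multiply by $x$, re-index over $U$, and finish with the two stated properties of $\kappa$ (insensitivity of $\kappa(A,-)$ to the projection $\pi$ when $\supp A\subseteq\pUP$, and the adjointness $\kappa(A,BC)=\kappa(B^{t}A,C)$). The only cosmetic difference is that you apply $f$ immediately after the multiplication by $x$ and then manipulate $\kappa$, whereas the paper performs all the $\kappa$-manipulations in $\C U$ and applies $f$ only at the very end; since $f$ is linear and the coefficients are scalars this commutes, so the two arguments are the same.
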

\begin{proof}
Recall that $[A]=\sum_{B\in V}{ \chi_{_{-A}}(B)} B$. Then
\begin{eqnarray*}
\lambda_x f^*([A])&=&\lambda_x f^*(\sum_{B\in V}{ \chi_{_{-A}}(B)} B)
=\lambda_x \sum_{B\in V}{ \theta\circ \kappa(-A, B)} f^*(B)\\
&=&\lambda_x \sum_{u\in U}{ \theta\circ \kappa(-A, \pi(u))} u
= \sum_{u\in U}{ \theta\circ \kappa(-A, u)}x u\\
&=& \sum_{u\in U}{ \theta\circ \kappa(-A, x^{-1}u)}u=\sum_{u\in U}{ \theta\circ \kappa(-x^{-t}A, u)}u,
\end{eqnarray*}
and the claim follows by applying $f$ on both sides of the formula above.
\end{proof}

\begin{Remark}\label{TypeAleftaction}
Note that $V=V_{\pUPs}$ is invariant under left multiplication by elements of $\widetilde U$. Hence $\widetilde U$ acts on $\C^V$. Indeed it permutes $\hat V \subseteq \C^V$. More precisely $u.\tau$ with $ u \in \widetilde U, \tau\in\C^V$ acts on $V$ by $u.\tau(B) = \tau (u^{-1}B)$ for $B\in V$. For the character $[A]\in\hat V, A\in V$ we have then:
\begin{eqnarray*}
(u.[A])(B)&=&\theta\circ \kappa(-A,u^{-1}B)=\theta\circ \kappa(-u^{-t}A,B)\\
&=&\theta\circ \kappa(-\pi(u^{-t}A),B)=[u^{-t}.A](B)
\end{eqnarray*}
setting $u^{-t}.A=\pi(u^{-t}A)$.

However, note that in the bilinear form of equation \ref{leftmult1} we cannot replace  $u$ by $\pi(u)$, since the intersection of the support of $-x^{-t}A$ and of $u$ is not contained in $\pUP$ in general. In fact, the projection $\pi = \pi_{\pUPs}$ is not a left $1$-cocycle on $\widetilde U$, not even on $U$ (but on $U_{\pUPs}$, see [\cite{GJD}, Lemma 6.4]). The left action by $\lambda_x$ does not take $[A]$ to a multiple of $x.[A]=[u^{-t}.A]$ in general, but into a linear combination of many characters $[C]\in \hat V$. We shall see an example, but first we present a condition in \ref{leftactionSupp} below, such that $\lambda_u$ with $u\in U$ acts  monomially on $\hat V$. \hfill$\square$
\end{Remark}

\begin{Notation}
Let $\KL=\{(i,j)\in \square\,|\, 1\leqslant j <\bar i \}$. Thus $\KL$ consists of all positions in $\square$  above the anti-diagonal.  Set $\pKL=\KL$ for types $\fB_n$ and $\fD_n$,  and $\pKL=\KL\cup \CC$ for type $\fC_n$. \hfill$\square$
\end{Notation}

\begin{Cor}\label{leftactionSupp}
Let $x\in U, A\in V$ such that $\supp(x^{-t}A)\subseteq \pKL$. Then we have
\[
\lambda_x [B]=\chi_{_{-B}}(x^{-1})[x^{-t}.B] \quad \text{ for all } B\in \cO_A.
\]
\end{Cor}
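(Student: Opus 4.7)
The strategy is to reduce the identity to a support-disjointness condition on $x^{-t}B$ and then to verify this disjointness for every $B\in\cO_A$. By Lemma~\ref{leftactionall} combined with the bijection $\pi\colon U\to V$ of Theorem~\ref{unique},
\[
\lambda_x[B] \;=\; \sum_{w\in V}\theta\bigl(\kappa(-x^{-t}B,\pi^{-1}(w))\bigr)\,w,
\]
while $\chi_{-B}(x^{-1})[x^{-t}.B] = \theta(\kappa(-B,x^{-1}))\sum_{w\in V}\theta(\kappa(-x^{-t}B,w))\,w$ (using $[x^{-t}.B]=[\pi(x^{-t}B)]$ from Remark~\ref{TypeAleftaction} and $\supp w\subseteq\pUP$). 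Matching coefficients of each $w$ reduces the assertion to the identity $\kappa(-x^{-t}B,\pi^{-1}(w)-w)=\kappa(-B,x^{-1})$ for every $w\in V$.

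Next I would decompose $\pi^{-1}(w)=1+w+\eta(w)$, where $\eta(w)$ collects the off-diagonal entries of the group element $\pi^{-1}(w)\in U$ outside $\pUP$. By Definition~\ref{rootsubgroupsU} together with \eqref{prs+}, $\supp\eta(w)\subseteq\CC\cup\RP$ in types $\fB_n,\fD_n$ and $\supp\eta(w)\subseteq\RP$ in type $\fC_n$. Using the adjointness $\kappa(-x^{-t}B,1)=\kappa(-B,x^{-1})$ coming from the identities displayed after \eqref{traceform}, the claim collapses to $\kappa(-x^{-t}B,\eta(w))=0$ for all $w\in V$. A sufficient condition is that $\supp(x^{-t}B)$ avoid $\CC\cup\RP$ (types $\fB_n,\fD_n$) respectively $\RP$ (type $\fC_n$); note $\pKL$ is itself disjoint from these sets, so the target disjointness is implied by---but weaker than---$\supp(x^{-t}B)\subseteq\pKL$.

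The main step, which I expect to be the most delicate, is to verify this disjointness for every $B\in\cO_A$ rather than merely for $B=A$. Writing $B=\pi(Au^{-t})$ for some $u\in U$ and splitting $x^{-t}B = x^{-t}Au^{-t}-x^{-t}(Au^{-t}-\pi(Au^{-t}))$, an index computation using the hypothesis $\supp(x^{-t}A)\subseteq\pKL$ and the lower-unitriangularity of $u^{-t}$ gives $\supp(x^{-t}Au^{-t})\subseteq\pKL$, so the first summand vanishes on the critical positions. For the second summand, at each critical $(i,j)$ any contributing index $k\leq i$ must satisfy $(k,j)\notin\pUP$, which forces $j\geq\bar k$ (resp.\ $j>\bar k$); then the expansion $(Au^{-t})_{kj}=\sum_l A_{kl}(u^{-t})_{lj}$ is vacuous, because $A_{kl}\neq 0$ requires $l<\bar k$ (resp.\ $l\leq\bar k$) while $(u^{-t})_{lj}\neq 0$ requires $l\geq j$---incompatible. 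Hence $(x^{-t}B)_{ij}=0$ throughout the critical region, finishing the argument.
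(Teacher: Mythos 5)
Your proof is correct, and the $\kappa$-algebra reduction is essentially the one in the paper: the paper writes $u = 1 + (u-1)$ and observes $\supp(x^{-t}B)\cap\supp(u-1)\subseteq\pKL\cap\UR=\pUP$, while you write $\pi^{-1}(w)=1+w+\eta(w)$ with $\supp\eta(w)\subseteq\UR\setminus\pUP$ and isolate $\kappa(-x^{-t}B,\eta(w))=0$; these are the same decomposition, just with the $\pUP$- and non-$\pUP$-parts of $u-1$ kept together or separated.

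Where you genuinely diverge is in establishing the support claim for every $[B]\in\cO_A$, not just $B=A$. The paper disposes of this in two lines: $x^{-t}C$ adds multiples of rows to lower rows, the rightmost nonzero entries of the rows are the main conditions, and the main conditions and their values are identical for all $[B]\in\cO_A$, whence $\supp(x^{-t}B)\subseteq\pKL$ follows ``easily.'' That argument bounds the rightmost position in each row of $x^{-t}B$, but it glosses over the possibility of cancellation at position $(i,j^*)$ with $j^*=\max_k m_k$ over contributing rows $k$ (in which case $(x^{-t}A)_{ij^*}=0$ says nothing about whether $(i,j^*)\in\pKL$), which would require a further step to rule out. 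Your argument sidesteps this entirely: you expand $B=\pi(Au^{-t})$, split $x^{-t}B = x^{-t}Au^{-t}-x^{-t}\bigl(Au^{-t}-\pi(Au^{-t})\bigr)$, and kill each summand on $\UR\setminus\pUP$ by a pure index computation (using $\pKL$ closed under leftward column shifts for the first, and the incompatibility $l<\bar k\leq j\leq l$ for the second). That is airtight and does not rely on any invariance-of-main-conditions fact. One small point you left implicit but which is needed: when you say the contributing index $k$ ``must satisfy $j\geq\bar k$,'' you are using that $(k,j)\notin\pUP$ together with $k\leq i<j$ forces $j\geq\bar k$ (the alternative $k\geq j$ being excluded by $k\leq i<j$); this is correct but worth stating. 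Also note you only establish $\supp(x^{-t}B)\cap(\UR\setminus\pUP)=\emptyset$, which is weaker than $\supp(x^{-t}B)\subseteq\pKL$ — you correctly observe that the weaker statement suffices, so no harm done, but the paper proves the stronger containment and uses it elsewhere.
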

\begin{proof} Let $[B] \in \cO_A$. It is easy to see, that $x^{-t}B$ arises from $B$ by taking $B$ and adding scalar multiples of rows to lower rows. Since for each $[B] \in \cO_A$, the most right hand side non zero entries are on the main conditions with $B_{ij}=A_{ij}$ for all $(i,j)\in \main(A)=\main(B)$.  Hence  if $\supp(x^{-t}A)\subseteq \pKL$, then $\supp(x^{-t}B)\subseteq \pKL$ for all $[B] \in \cO_A$. By \ref{leftactionall}, we have:
\begin{eqnarray}
\lambda_x [B]&=&\sum\nolimits_{u\in U}{ \theta\circ \kappa(-x^{-t}B, u)}\pi(u)=\sum\nolimits_{u\in U}{ \theta\circ \kappa(-x^{-t}B, u-1+1)}\pi(u)
\nonumber\\&=&\theta\circ\kappa(-x^{-t}B,1)\cdot\sum\nolimits_{u\in U}{ \theta\circ \kappa(-x^{-t}B, u-1)\pi(u)}
\nonumber\\&=&\theta\circ\kappa(-B,x^{-1})\cdot\sum\nolimits_{u\in U}{ \theta\circ \kappa(-x^{-t}B, u-1)\pi(u)}
\nonumber\\&=&\chi_{_{-B}}(x^{-1})\cdot\sum\nolimits_{u\in U}{ \theta\circ \kappa\big(-\pi(x^{-t}B), \pi(u)\big)\pi(u)} \label{leftactionsupp1}
\end{eqnarray}
since $\supp(x^{-t}B)\cap\supp(u-1)\subseteq \pUP$ and $\pi(u)=\pi(u-1)$.
Then the statement holds observing the following equation:
\begin{equation*}%\label{leftactionsupp2}
\sum_{u\in U}\theta\circ \kappa\big(-\pi(x^{-t}B), \pi(u)\big)\pi(u)=\sum_{\pi(u)\in V}{ \chi_{-\pi(x^{-t}B)}\big(\pi(u)\big)\pi(u)}=[\pi(x^{-t}B)]=[x^{-t}.B].
\end{equation*}
\end{proof}

\begin{Remark}\label{isotostaircase}
 The left action  of $u\in \widetilde U_{\pUPs}=\{u\in \widetilde U\,|\, \supp(u-1)\subseteq \pUP\}\leqslant \widetilde U$ on $\cO_A, A\in V$ defined in \ref{rowformula} commutes with the right $U$-action provided $\supp(u^{-t}A)\subseteq \pKL$ by [\cite{GJD}, Lemma 6.4]. We used this to prove in  [\cite{GJD}, Corollary 6.8] that every orbit module $\C \cO_A$ is isomorphic to a staircase orbit module, for $[A]\in V$. Replacing $\widetilde U_{\pUPs}$ by $U$ and applying \ref{leftactionSupp} yields this result as well by an easy calculation basically following the steps in the proof of [\cite{GJD}, Lemma 6.4].
\end{Remark}

\begin{Remark}\label{LeftTildeUandUAction}
 For $(i,j)\in \pUP$ and $\alpha\in \F_q$ one sees easily that $x_{ij}(\alpha).[A]=\tilde x_{ij}(\alpha).[A]$. Thus the left action of $U$ on $\hat V $ is essentially the same as the left action of $\widetilde U_{\pUPs}$ on $\hat V$. More precisely, if
\[
u=\prod\nolimits_{(i,j)\in \pUPs} x_{ij}(\alpha_{ij})\in U, \quad \alpha_{ij}\in \F_q,
\]
we set 
\[
\tilde u=\prod\nolimits_{(i,j)\in \pUPs} \tilde  x_{ij}(\alpha_{ij})\in \widetilde U_{\pUPs}\leqslant U_N(q).
\]
Then $u.[A]=\tilde u.[A]$ for all $u\in U$. We define $u[A]=\lambda_u[A]=\chi_{_{-A}}(u^{-1})u.[A]$ if $\supp(u^{-t}A)\subseteq \pKL$. \hfill$\square$
\end{Remark}

We next use left multiplication $\lambda_x, x\in U$ on staircase orbit modules $\C \cO_A$. $[A]\in \hat V$ to embed $\C \cO_A$ into a direct sum of main separated orbit modules. Since $\C \hat V \cong \C U$ we have
\begin{equation}\label{E1}
\lambda_x [A]=\sum_{C\in V} \mu_{_C}[C]
\end{equation}
with uniquely determined $\mu_{_C}\in \C$ for all $C\in V$. 
Our first auxiliary result gives a formula for $\mu_{_C}$:
\begin{Lemma}\label{Aux1}
In \ref{E1} we have:
$
\mu_{_C}=|U|^{-1}\sum_{u\in U} \theta\kappa(-x^{-t}A, u)\chi_{_C}(\pi(u)).
$
\end{Lemma}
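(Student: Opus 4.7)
The statement is pure Fourier inversion on the finite abelian group $(V,+)$ applied to the explicit formula for $\lambda_x[A]$ already supplied by Lemma \ref{leftactionall}. The plan has three short steps: write $\lambda_x[A]$ as an element of $\C V$ in the natural basis $V$, expand $\sum_C\mu_{_C}[C]$ in the same basis, and then solve the resulting linear system using orthogonality of the irreducible characters $\chi_{_C}$ of $V$.

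\textbf{Step one.} I would begin with the identity
$$\lambda_x[A] \;=\; \sum_{u\in U}\theta\circ\kappa(-x^{-t}A,u)\,\pi(u)\;\in\;\C V$$
from Lemma \ref{leftactionall}. By Theorem \ref{unique}, the restriction $\pi|_U : U \to V$ is a bijection, so substituting $v=\pi(u)$ rewrites this as
$$\lambda_x[A] \;=\; \sum_{v\in V} a_v\,v,\qquad a_v := \theta\circ\kappa\bigl(-x^{-t}A,\,\pi|_U^{-1}(v)\bigr),$$
which is the expansion of $\lambda_x[A]$ in the $\C$-basis $V$ of $\C V$.

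\textbf{Step two.} Using Notation \ref{lide}, each basis element $[C]$ of the other basis expands as $[C]=\sum_{v\in V}\overline{\chi_{_C}(v)}\,v$, so the right-hand side of (\ref{E1}) becomes
$$\sum_{C\in V}\mu_{_C}[C] \;=\; \sum_{v\in V}\Bigl(\sum_{C\in V}\mu_{_C}\,\overline{\chi_{_C}(v)}\Bigr) v.$$
Comparing coefficients of $v$ with the expansion in Step one yields, for every $v\in V$,
$$a_v \;=\; \sum_{C\in V}\mu_{_C}\,\overline{\chi_{_C}(v)}.$$

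\textbf{Step three.} To isolate $\mu_{_C}$ I would apply the orthogonality relation
$$\sum_{v\in V}\overline{\chi_{_{C'}}(v)}\,\chi_{_C}(v) \;=\; |V|\,\delta_{C,C'},$$
valid because $\{\chi_{_C}\mid C\in V\}=\hat V$ is the full character group of the finite abelian group $(V,+)$. Multiplying the previous display by $\chi_{_C}(v)$ and summing over $v\in V$ collapses the right-hand side to $|V|\,\mu_{_C}$, giving $|V|\mu_{_C}=\sum_{v\in V}a_v\,\chi_{_C}(v)$. Re-substituting $v=\pi(u)$ on the left and using $|U|=|V|$ produces the asserted formula.

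\textbf{Obstacle.} There is essentially no genuine obstacle; the only thing to keep straight is the chain of identifications $\C U\cong \C\hat V\cong \C V$ delivered by $f$ and $f^*$ together with the bijection $\pi|_U$. The lemma is really the statement that the coefficients $\mu_{_C}$ are the Fourier coefficients of the function $v\mapsto a_v$ with respect to the characters $\chi_{_C}$ of $V$.
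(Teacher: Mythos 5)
Your proof is correct and follows essentially the same Fourier-inversion argument as the paper's: both expand $\lambda_x[A]$ and $\sum_C\mu_{_C}[C]$ in the basis $V$ of $\C V$, compare coefficients, and apply orthogonality of the characters $\chi_{_C}$ of the finite abelian group $(V,+)$ to solve for $\mu_{_C}$, using $|U|=|V|$ via $\pi|_U$.
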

\begin{proof}
Recall that $\chi_{_C}(\pi(u))=\theta\kappa(C,\pi(u))=\theta\kappa(C,u)$, since $\supp(C)\subseteq \pUP$. Replacing in \ref{E1} $C$ by $B\in V$ and calculating $[B]$ we get, using \ref{leftactionall}
\begin{eqnarray*}
\sum_{B\in V}\mu_{_B}[B]&=&\sum_{B\in V}\mu_{_B}\sum_{C\in V}{ \theta\kappa(-B,C)} C=\sum_{u \in U}\big(\sum_{B\in V}\mu_{_B} \theta\kappa(-B,u)\big) \pi(u)\\&=&\lambda_x [A]=\sum_{u\in U}{ \theta\kappa(-x^{-t}A, u)}\pi(u),
\end{eqnarray*}
and hence, comparing coefficients
\begin{equation}\label{E2}
\theta\kappa(-x^{-t}A, u)=\sum_{B\in V} \mu_{_B}\theta\kappa(-B,u)=\sum_{B\in V}\mu_{_B} \chi_{_{-B}}(\pi(u)).
\end{equation}
We apply orthogonality relations and obtain, where $\langle \chi_{_C}, \chi_{_B} \rangle$ denotes the standard inner product of the irreducible character $\chi_{_B}, \chi_{_C}, B, C\in V$.
\begin{eqnarray*}
\mu_{_C}&=&\sum_{B\in V}\mu_{_B}\langle \chi_{_C}, \chi_{_B} \rangle
=|V|^{-1}\sum_{B\in V} \sum_{D\in V} \mu_{_B}\chi_{_C}(D)\overline{\chi_{_B}(D)}\\
&=&|V|^{-1} \sum_{D\in V}\big(\sum_{B\in V} \mu_{_B}{\chi_{_{-B}}(D)}\,\big)\chi_{_C}(D)\\&=&|U|^{-1} \sum_{u\in U}\big(\sum_{B\in V} \mu_{_B}{\chi_{_{-B}}(\pi(u))}\,\big)\chi_{_C}(\pi(u))\\&\stackrel{\ref{E2}}{=}&
|U|^{-1} \sum_{u\in U} \theta\kappa(-x^{-t}A, u)\chi_{_C}(\pi(u)),
\end{eqnarray*}
as desired.
\end{proof}

\begin{Situation}\label{situation}
In the following we shall be concerned with the following special setup:
\begin{itemize}
\item $[A]\in \hat V$ is a staircase core character.
\item $(i,k)\in \rmc(A)$, hence in particular $\tilde n<k\leq N$. (We mark this position blue in \ref{leftmul2} below).
\item $\bar k<j<k,$ hence $(j,k)\in \RP$.
\item $x=x_{ij}(-\lambda A_{ik}^{-1})\in U$ for some $\lambda\in \F_q$. \hfill$\square$
\end{itemize}
\end{Situation}
Let $\tilde x=\tilde x_{ij}(-\lambda A_{ik}^{-1})$.  Then $( x^{-t}A)_{ab}=(\tilde x^{-t}A)_{ab}$ for all $(a,b)\in \UR \cup\diag$ and hence in \ref{Aux1} we can replace $\theta\kappa(-x^{-t}A, u)$ by $\theta\kappa(-\tilde x^{-t}A, u)$.  
Note that $\tilde x^{-t}A$  is obtained by adding $\lambda A_{ik}^{-1}$ times row $i$ of $A$ to row $j$ in $A$, as illustrated below:
\begin{equation}\label{leftmul2} 
\includegraphics[width=0.45\textwidth]{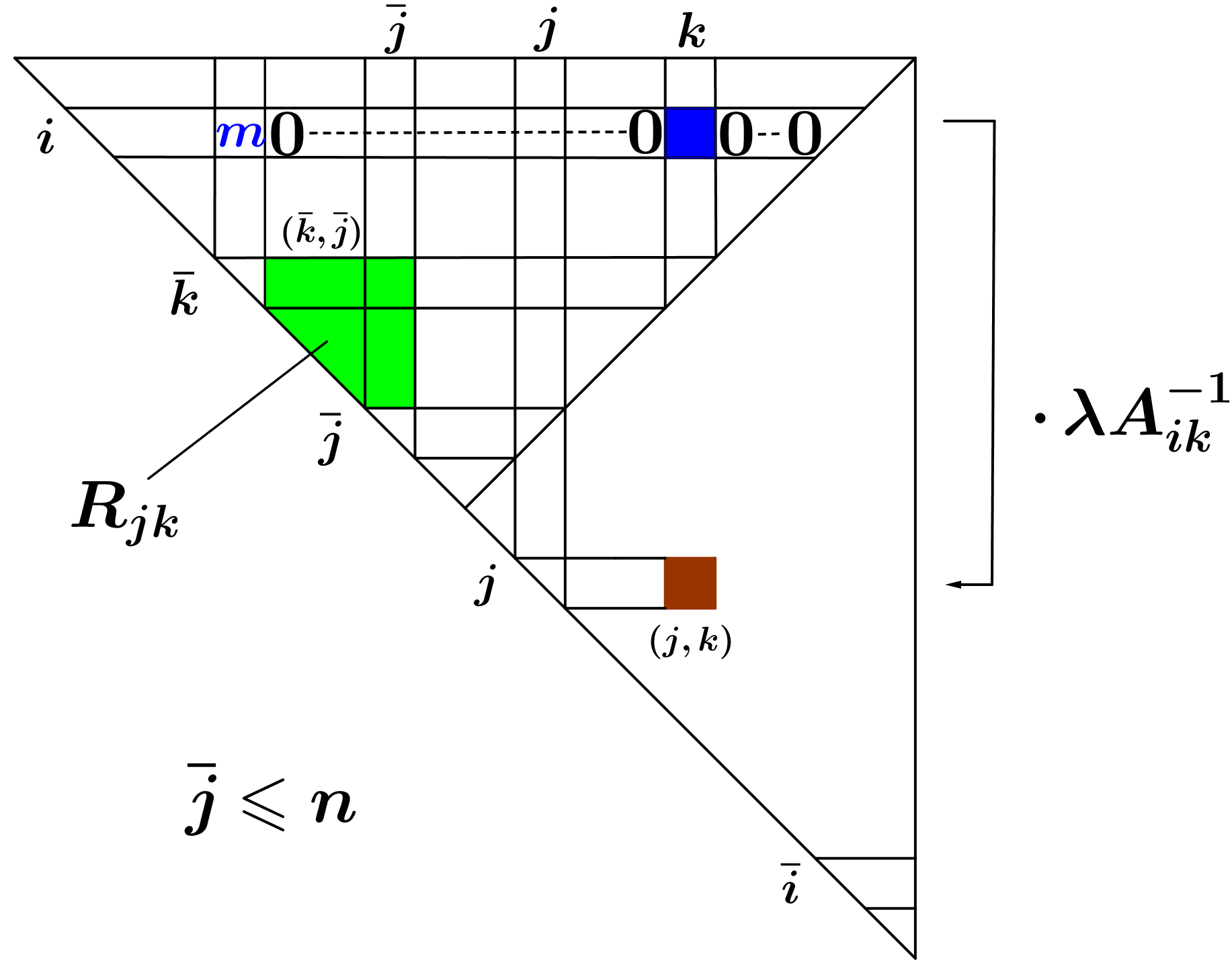}
\quad\quad
\includegraphics[width=0.442\textwidth]{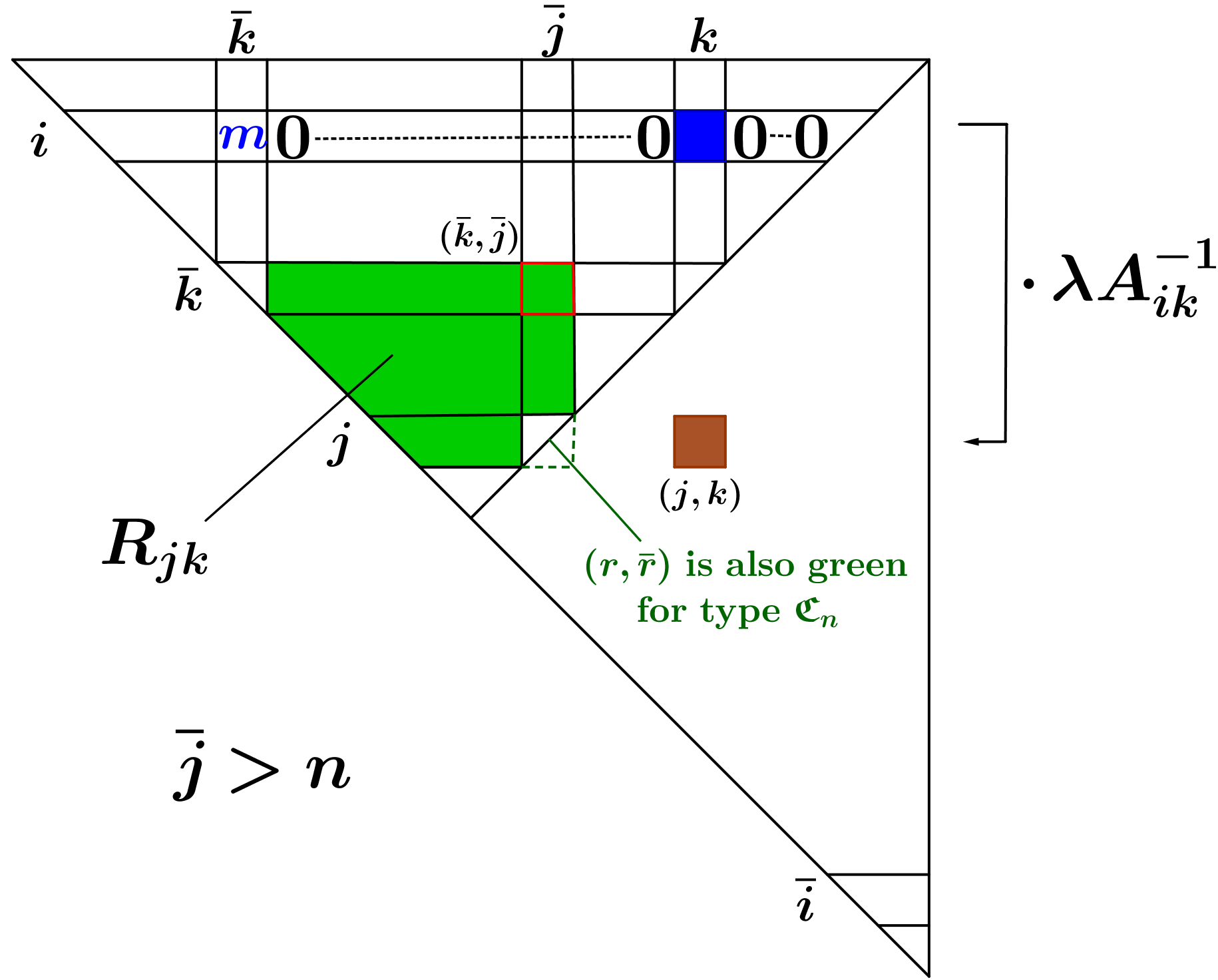}
\end{equation}

\medskip

Since $[A]$ is a core, $A$ and $x^{-t}A$ coincide at all positions in $\UR\cup \diag$ except the position  $(j,k)$ where
%\begin{equation}\label{leftmult3}
$(x^{-t}A)_{jk} = A_{jk}+ \lambda e_{jk}.$
%\end{equation}
Since $\supp(u)\in \UR\cup \diag$, we have in view of \ref{traceform}
% and the fact that $u_{j\bar k}=0$ since $\bar k < j$
\begin{equation}\label{leftmult4}
\theta\kappa(x^{-t}A,u) = \theta\kappa(A,u)\theta(\lambda u_{jk}) = \prod_{(a,b)\in\pUPs}\theta(A_{ab}u_{ab})\theta(\lambda u_{jk}).
\end{equation}

By \ref{unique} (see illustration \ref{dependleft}) the entry $u_{jk}$ of $u\in U$ at the brown position in  \ref{leftmul2} above can be expressed as $\pm u_{\bar k\bar j} + g(u)$, where $g: U \longrightarrow \F_q$ is a polynomial function with coefficients in $\F_q$ in the entries of $u\in U$ at positions in the set $R_{jk}\setminus \{(\bar k,\bar j)\}\subseteq \pUP$  marked green in \ref{leftmul2} (see also \ref{dependleft}). We shall denote this set now by $K$. Likewise we set $J= \pUP\setminus R_{jk}$. As a consequence we obtain:
\begin{equation}\label{leftmult5}
\theta(\lambda u_{jk}) = \theta(\pm\lambda u_{\bar k\bar j})\theta(\lambda g(u)).
\end{equation}

Observing that by \ref{traceform} 
\begin{equation}\label{leftmult6}
\chi_{_C}(u) = \prod_{(a,b)\in \pUPs} \theta(C_{ab}u_{ab}),
\end{equation}
and dividing up $\pUP$ into a disjoint union $\pUP = J\cup\{(\bar k,\bar j)\}\cup K$ equations \ref{leftmult4}, \ref{leftmult5} and \ref{leftmult6} imply immediately
\begin{eqnarray*}\label{leftmult7}
\theta\kappa(-x^{-t}A,u)\chi_{_C}(u) &=& \prod_{(a,b)\in\pUPs}\theta(-A_{ab}u_{ab})\theta(\lambda u_{jk})\cdot\prod_{(a,b)\in \pUPs} \theta(C_{ab}u_{ab})\\
&=& \Big(\prod_{(a,b)\in J}\theta\big((C-A)_{ab}u_{ab}\big)\Big)\Big(\theta\big((C_{\bar k\bar j}\pm\lambda-A_{\bar k\bar j})u_{\bar k\bar j}\big)\Big)\cdot \psi_K(u).
\end{eqnarray*}
 Here $\psi_K: V\longrightarrow \C$ is a map such that $\psi_K(u)$ for $u\in U$ depends only on the entries of $u$ at positions in $K=R_{jk}\setminus\{(\bar k, \bar j)\}$, and hence only on $V_K$.
Applying  \ref{Aux1} we have shown:

\begin{Lemma}\label{Aux2}
Under the assumptions of \ref{situation} and for $C\in V$ we have: 
\begin{eqnarray}%\label{coeffmuC}
\mu_{_C} &=&|U|^{-1}\sum_{u\in U} \theta\kappa(-x^{-t}A, u)\chi_{_C}(\pi(u))\nonumber\\
&=&|U|^{-1} \sum_{u\in U}\Big(\prod_{(a,b)\in J}\theta\big((C-A)_{ab}u_{ab}\big)\Big)\Big(\theta\big((C_{\bar k\bar j}\pm\lambda-A_{\bar k\bar j})u_{\bar k\bar j}\big)\Big)\cdot \psi_K(u) \label{leftmult9}.
\end{eqnarray}

\end{Lemma}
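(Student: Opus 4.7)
The first equality is exactly Lemma \ref{Aux1} specialised to the data of \ref{situation}. For the second, the plan is to assemble the three computational steps already prepared in the paragraphs preceding the statement: the reduction of $\theta\kappa(-x^{-t}A,u)$ to a pairing against $A$ plus a single extra character at $(j,k)$; the elimination of $u_{jk}$ using the dependence relation \ref{prs+}; and finally the expansion of $\chi_{_C}(\pi(u))$ as a product over $\pUP$, regrouped along the partition $\pUP = J \cup \{(\bar k,\bar j)\} \cup K$.

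First I would replace $x^{-t}A$ by $\tilde x^{-t}A$ in $\kappa(-x^{-t}A,u)$, where $\tilde x = \tilde x_{ij}(-\lambda A_{ik}^{-1})$. Since $x$ and $\tilde x$ agree at every position of $\UR\cup\diag$ and $\supp u \subseteq \UR\cup\diag$ for $u\in U$, the two pairings coincide. Now $\tilde x^{-t}A$ arises from $A$ by adding $\lambda A_{ik}^{-1}$ times row $i$ to row $j$; because $[A]$ is a core character and $\bar k < j < k$, this changes exactly the entry at $(j,k)$, giving $(x^{-t}A)_{jk} = A_{jk} + \lambda$. Combining with \ref{traceform} yields equation \ref{leftmult4}.

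Second, using the expression $u_{jk} = \pm u_{\bar k\bar j} + g(u)$ supplied by \ref{prs+}, where $g$ is a polynomial in the entries of $u$ at positions in $K := R_{jk}\setminus\{(\bar k,\bar j)\}$, I split $\theta(\lambda u_{jk})$ as in \ref{leftmult5}. Expanding $\chi_{_C}(\pi(u)) = \prod_{(a,b)\in\pUPs} \theta(C_{ab} u_{ab})$ by \ref{leftmult6} and regrouping along $\pUP = J \cup \{(\bar k,\bar j)\} \cup K$ collects the $J$-factors into $\prod_{(a,b)\in J}\theta((C-A)_{ab}u_{ab})$, assembles the single $(\bar k,\bar j)$-factor by absorbing $\pm\lambda$, and packages the remaining $K$-factors together with $\theta(\lambda g(u))$ into one function $\psi_K(u)$ depending only on the entries of $u$ at positions in $K$.

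The main thing to verify carefully is the claim in step one that adding a multiple of row $i$ to row $j$ disturbs only position $(j,k)$ inside $\UR\cup\diag$. The potentially nonzero entries of row $i$ sit at the main condition $(i,k)$, the minor condition $(i,\bar k)$ and at most some supplementary positions; by the staircase hypothesis, supplementary positions in row $i$ lie in columns $\bar k'$ for right main conditions $(i',k')\in\rmc(A)$ with $k < k'$, hence in columns $\leq \bar k$. Since $\bar k < j$, all source columns other than $k$ produce target positions $(j,l)$ with $l \leq \bar k < j$, which lie outside $\UR$ and therefore contribute nothing to the pairing against $u$. Once this is recorded, the rest of the argument is bookkeeping on the factorisation.
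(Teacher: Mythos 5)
Your proof is correct and follows essentially the same route as the paper's: the first equality is Lemma \ref{Aux1}, and the second is obtained by replacing $x^{-t}A$ by $\tilde x^{-t}A$ using $\supp u\subseteq\UR\cup\diag$, isolating the single perturbed entry at $(j,k)$, eliminating $u_{jk}$ via \ref{prs+}, and regrouping the product over $\pUP = J\cup\{(\bar k,\bar j)\}\cup K$. The extra care you take in justifying that adding a scaled row $i$ to row $j$ disturbs only $(j,k)$ among positions in $\UR\cup\diag$ — noting that the minor condition $(i,\bar k)$ and any supplementary conditions in row $i$ sit in columns at most $\bar k < j$, hence land below the diagonal — is exactly the content behind the paper's shorter remark that a core's row $i$ contributes nothing else in the upper triangle, so this is a faithful reconstruction rather than a different argument.
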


We can now formulate and prove our third and last auxiliary result:

\begin{Lemma}\label{Aux3}
In the situation of \ref{situation} the coefficient $\mu_{_C}$ in $\lambda_x[A]=\sum_{C\in V}\mu_{_C}[C]$ is given as:
\begin{equation}\label{leftmult10}
\mu_{_C} = \Big(\prod_{(a,b)\in J}q^{-1}\sum_{\alpha\in\F_q}\theta\big((C-A)_{_{ab}}\alpha\big)\Big)\cdot\Big(q^{-1}\sum_{\alpha\in\F_q}\theta\big((C_{\bar k\bar j}\pm\lambda-A_{\bar k\bar j})\alpha\big)\Big)\cdot\Big(|V_K|^{-1}\sum_{X\in V_K}\psi_K(X)\Big),
\end{equation}
and hence using the Kronecker delta
\begin{equation}\label{leftmult11}
\mu_{_C} = \Big(\prod_{(a,b)\in J}\delta_{C_{ab},A_{ab}}\Big)\cdot\delta_{C_{\bar k\bar j},A_{\bar k\bar j}\pm\lambda}\cdot\Big(|V_K|^{-1}\sum_{X\in V_K}\psi_K(X)\Big).
\end{equation}
\end{Lemma}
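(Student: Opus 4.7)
The plan is to start from equation \ref{leftmult9} in Lemma \ref{Aux2} and use the parametrization of $U$ by $V = V_{\pUPs}$ from Theorem \ref{unique} to factor the sum over $u \in U$ into independent sums over three sets of entries. By \ref{unique}, the map $\pi:U \to V$ is a bijection and, for any fixed linear order on $\pUP$, Theorem \ref{def of generator} gives that the entries $u_{ab}$ for $(a,b) \in \pUP$ range independently over $\F_q$ as $u$ ranges over $U$. Consequently, the cardinality splits as $|U| = q^{|J|} \cdot q \cdot |V_K|$ along the disjoint decomposition $\pUP = J \,\dot\cup\, \{(\bar k,\bar j)\}\,\dot\cup\, K$ that was fixed before Lemma \ref{Aux2}.

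Next I would verify that the three factors in the integrand of \ref{leftmult9} depend on disjoint sets of entries of $u$: the product over $(a,b) \in J$ depends only on the $u_{ab}$ with $(a,b) \in J$; the middle factor depends only on $u_{\bar k \bar j}$; and $\psi_K(u)$ depends by construction only on the entries of $u$ at positions in $K = R_{jk}\setminus \{(\bar k,\bar j)\}$. This independence, together with the bijection $\pi$, lets me rewrite the sum over $u \in U$ as a product of a sum over $V_J$, a sum over $\F_q$ (for position $(\bar k,\bar j)$), and a sum over $V_K$. Distributing the factor $|U|^{-1}$ as $q^{-|J|}\cdot q^{-1}\cdot |V_K|^{-1}$ across these three factors yields \ref{leftmult10} immediately.

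To pass from \ref{leftmult10} to \ref{leftmult11} I would invoke the orthogonality of the nontrivial additive character $\theta$: for any $c \in \F_q$, $q^{-1}\sum_{\alpha \in \F_q}\theta(c\alpha) = \delta_{c,0}$. Applying this to each factor indexed by $(a,b) \in J$ and to the factor at $(\bar k,\bar j)$ turns them into the Kronecker deltas $\delta_{C_{ab},A_{ab}}$ and $\delta_{C_{\bar k\bar j}, A_{\bar k\bar j}\pm\lambda}$ respectively, while the $V_K$-sum is untouched since $\psi_K$ is a general function of $V_K$.

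The only mildly delicate point is the disjointness of $J$, $\{(\bar k,\bar j)\}$ and $K$ inside $\pUP$ and the assertion that $\psi_K$ really is a function of the entries in $K$ alone; both are already recorded in the discussion preceding Lemma \ref{Aux2} (using formula \ref{prs+} for $u_{jk}$ in terms of entries in $R_{jk}$). Once these are in hand, the computation is a purely formal factorization, so I do not expect any genuine obstacle.
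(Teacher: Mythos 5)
Your proposal is correct and takes essentially the same route as the paper: the paper verifies the equality of \ref{leftmult9} and \ref{leftmult10} by multiplying out the product of sums and observing (via the bijection $\pi|_U: U \to V$ of Theorem \ref{unique} and the decomposition $\pUP = J \,\dot\cup\, \{(\bar k,\bar j)\} \,\dot\cup\, K$) that this reproduces the sum over $u \in U$, whereas you run the identical factorization in the opposite direction; the passage to \ref{leftmult11} via orthogonality of $\theta$ is likewise identical.
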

\begin{proof} Note that in the sums in \ref{leftmult10}, each combination of $\alpha\in\F_q$ for each $(a,b)\in J$ and $\beta\in\F_q$ together with an $X\in V_K$  provides precisely one  entry for each position in $\pUP$ and hence gives precisely one $u\in U$. Therefore, multiplying out the products of sums in \ref{leftmult10} gives altogether a summation over $u\in U$ and, as can be seen by direct inspection, the right hand side of equation \ref{leftmult9}. 

%Note that in the sums entry $\alpha$ runs through $\F_q$ for every position in $\pUP$, hence %altogether the summation is over $u\in U$. Thus, multiplying out the products of sums in %(\ref{leftmult10}) gives the right hand side of equation (\ref{leftmult9}). 

Now applying the well known formula $\sum_{\alpha\in\F_q}\theta(\alpha)=0$, (saying, that the trivial character of the additive group of $\F_q$ and $\theta$ are orthogonal) and observing, that $\theta(0) = 1$, we conclude, that the first two sums in \ref{leftmult10} are zero and hence $\mu_{_C}=0$ unless the summands in those sums itself are all zero, yielding equation \ref{leftmult11}.
\end{proof}

This implies immediately:
\begin{Cor} \label{clearconnectedmain}
In the situation of \ref{situation} let $\lambda_x[A]=\sum_{C\in V}\mu_{_{_C}}[C]$. Then $\mu_C\neq 0$ implies:
\begin{enumerate}
\item[i)] $A$ und $C$ coincide at all positions $(a,b)\in J$.
\item[ii)] $C_{\bar k\bar j} = A_{\bar k\bar j}\pm\lambda$.
\end{enumerate}
Thus if $\mu_{_C}\neq 0$, then $A$ and $C$ may differ only at positions in $R_{jk}$ (marked green in illustration \ref{leftmul2}).\hfill$\square$
\end{Cor}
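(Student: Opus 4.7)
The plan is to deduce the corollary as a direct consequence of Lemma \ref{Aux3}. Since we are in the situation of \ref{situation}, the expansion $\lambda_x[A]=\sum_{C\in V}\mu_{_C}[C]$ already has its coefficients computed by equation \ref{leftmult11}, so the work is essentially to read off the vanishing conditions.

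First, I would recall that equation \ref{leftmult11} expresses $\mu_{_C}$ as a product of three factors: a Kronecker-$\delta$ product over positions in $J=\pUP\setminus R_{jk}$, a single Kronecker-$\delta$ at the position $(\bar k,\bar j)$, and a scalar $|V_K|^{-1}\sum_{X\in V_K}\psi_K(X)$ which depends only on $A$, $\lambda$ and $k,j$ (not on $C$). The hypothesis $\mu_{_C}\neq 0$ therefore forces each of the first two factors to be nonzero. The product $\prod_{(a,b)\in J}\delta_{C_{ab},A_{ab}}$ is nonzero precisely when $C_{ab}=A_{ab}$ for every $(a,b)\in J$, which is statement (i). Similarly, $\delta_{C_{\bar k\bar j},A_{\bar k\bar j}\pm\lambda}\ne 0$ forces $C_{\bar k\bar j}=A_{\bar k\bar j}\pm\lambda$, which is statement (ii).

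For the last assertion, I would observe that $\pUP$ decomposes as the disjoint union $J\,\dot{\cup}\,\{(\bar k,\bar j)\}\,\dot{\cup}\,K=J\,\dot{\cup}\,R_{jk}$. Since (i) pins down $A$ and $C$ to agree on all of $J$, they can differ only on $R_{jk}$, exactly the set of green positions in illustration \ref{leftmul2}. Moreover, within $R_{jk}$, position $(\bar k,\bar j)$ is further constrained by (ii), while positions in $K$ remain unconstrained by these two conditions.

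There is no real obstacle here: Lemma \ref{Aux3} has already absorbed all the combinatorics (namely the expansion via the bilinear form, the use of \ref{prs+} to express $u_{jk}$ through $\pm u_{\bar k\bar j}+g(u)$, and the orthogonality-of-characters argument that collapses the sums in \ref{leftmult10} to Kronecker deltas). The only subtlety worth flagging explicitly is that the scalar factor $|V_K|^{-1}\sum_{X\in V_K}\psi_K(X)$ is independent of $C$, so it does not contribute any further constraint on $C$ when $\mu_{_C}\neq 0$; it only affects whether the whole expression is zero for reasons unrelated to (i) and (ii).
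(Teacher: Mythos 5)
Your argument is correct and matches the paper's: the corollary carries a terminal $\square$ and is introduced by ``This implies immediately'', so the intended proof is exactly the read-off from the two Kronecker-delta factors in equation \ref{leftmult11}, together with the observation that $\pUP$ is the disjoint union of $J$ and $R_{jk}$. One factual slip in your closing remark: the factor $|V_K|^{-1}\sum_{X\in V_K}\psi_K(X)$ is \emph{not} independent of $C$, since by its defining equation $\psi_K$ absorbs the contributions $\theta\big((C-A)_{ab}u_{ab}\big)$ for $(a,b)\in K$, and only the piece $\theta(\lambda g(u))$ is $C$-independent. This does not affect the validity of your proof, because the implication in the corollary is one-directional and needs only the first two factors, but the third factor can in fact impose further vanishing constraints on $C$ over the positions in $K$.
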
 

%We define a total order $\prec$ on

For $(i,j), (k,l) \in \pUP$ we define $(i,j)\preceq (k,l)$ if either $i>k$ or $i=k$ and $j\leqslant l$. Then $\preceq$ is a total ordering on $\pUP$.
For $[A]\in \hat V$ not main separated define $M(A)=M(\cO_A)$ to be the {\bf maximal} position in $\Limb^\circ(A)\cap \main(A)$.

\begin{Lemma} 
Let $[A]\in \hat V$ be not main separated. Then there exists $u\in U$ such that 
$\lambda_u [A]=\sum_{C\in V} \mu_{_C}[C]$
satisfies: If $\mu_{_C}\neq 0$ then $[C]$ is main separated or $M(C)\prec M(A)$.
\end{Lemma}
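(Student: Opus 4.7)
The plan is to treat two cases according to whether $M(A)=(i_0,j_0)$ sits on the leg $\cL(i',j_0)$ of a main condition $(i',j_0)\in\main(A)$ with $i'<i_0$, or on the arm $\cA(i',\overline{i_0})$ of a right main condition $(i',\overline{i_0})\in\rmc(A)$ with $i'<i_0$. In each case I would construct $u\in U$ so that left-multiplication effectively annihilates the entry $A_{i_0,j_0}$, killing the bad main condition at $M(A)$, while not disturbing the part of $A$ that could create new bad main conditions $\succeq M(A)$.

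For the leg case, take $u=x_{i'i_0}\bigl(A_{i_0,j_0}A_{i',j_0}^{-1}\bigr)\in U$, using $(i',i_0)\in\UP\subseteq\pUP$. Since $(i',j_0)$ is the rightmost nonzero entry of row $i'$, subtracting a multiple of row $i'$ from row $i_0$ only touches columns $\leq j_0<\overline{i_0}$, so $\supp(u^{-t}A)\subseteq\pKL$; the mirror contribution on row $\overline{i'}$ vanishes because row $\overline{i_0}$ of $A$ is zero. Corollary \ref{leftactionSupp} then collapses $\lambda_u[A]$ into the single character $\chi_{_{-A}}(u^{-1})[u^{-t}.A]$. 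Its main conditions coincide with those of $A$ outside row $i_0$, and in row $i_0$ the rightmost nonzero entry has moved strictly left of $j_0$ (or disappeared). For any $(i'',j'')\in\main(A)$ with $i''<i_0$, both the rows above $i''$ (controlling leg-badness) and the column $\overline{i''}>\overline{i_0}>j_0$ (controlling arm-badness) are untouched by the modification, so $(i'',j'')$ cannot have become newly bad. Every bad main condition of $u^{-t}.A$ therefore lies in row $i_0$ strictly left of $j_0$ or in some row $>i_0$, and is $\prec M(A)$.

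For the arm case, apply Situation \ref{situation} with $(i,k)=(i',\overline{i_0})$, $j=\overline{j_0}$ and $\lambda=\mp A_{i_0,j_0}$, so that $A_{i_0,j_0}\pm\lambda=0$; the chain $i'<i_0<\overline{j_0}<\overline{i_0}<\overline{i'}$ together with $\overline{i_0}>\tilde n$ verifies $(i',\overline{j_0})\in\pUP$, $(\overline{j_0},\overline{i_0})\in\RP$ and $(i',\overline{i_0})\in\rmc(A)$. Taking $u=x=x_{ij}(-\lambda A_{ik}^{-1})$, Corollary \ref{clearconnectedmain} implies that any $C$ with $\mu_C\neq 0$ coincides with $A$ outside the rectangle $R_{jk}=\{(a,b)\in\pUP\mid i_0\leq a\leq\overline{j_0},\,b\leq j_0\}$ and satisfies $C_{i_0,j_0}=0$, while its entries at the free positions $K=R_{jk}\setminus\{(i_0,j_0)\}$ are arbitrary. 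The main obstacle is to check the conclusion uniformly over all such $C$; the decisive input is that $R_{jk}$ is bounded on the right by column $j_0<\overline{i_0}$. In row $i_0$ of $C$ the rightmost nonzero entry lies strictly left of $j_0$ (positions right of $(i_0,j_0)$ are outside $R_{jk}$ and were already zero in $A$); in each row $a\in(i_0,\overline{j_0}]$ either the old main condition $M_A(a)>j_0$ survives or a new one appears at column $\leq j_0$; rows outside $[i_0,\overline{j_0}]$ are unchanged. For any $(i'',j'')\in\main(A)=\main(C)$ with $i''<i_0$, the column $\overline{i''}>\overline{i_0}>j_0$ and the rows $<i''<i_0$ both lie outside $R_{jk}$, so $(i'',j'')$ remains non-bad in $C$. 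Hence every element of $\Limb^\circ(C)\cap\main(C)$ lies in row $i_0$ strictly left of $j_0$ or in some row $>i_0$, and so $\prec M(A)$.
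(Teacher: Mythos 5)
Your proof follows the same overall strategy as the paper, but with one structural difference and one shared imprecision worth pointing out.

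The paper's proof first uses one left action (restricted row operations) to replace $[A]$ by a staircase character in a way that preserves or decreases $M(A)$; after that, any remaining bad main condition must sit on an arm, and only then does it invoke Situation \ref{situation} and Corollary \ref{clearconnectedmain}. You instead branch immediately on whether $M(A)$ sits on a leg or an arm. Your leg case is a localized version of the paper's staircase reduction (you only annihilate the single entry $A_{i_0,j_0}$ rather than all leg--main collisions), and your verification that $\supp(u^{-t}A)\subseteq\pKL$, that Corollary \ref{leftactionSupp} collapses $\lambda_u[A]$ to a scalar multiple of one character, and that $M$ drops, is correct. Your arm case is precisely the paper's.

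However, your arm case reaches for Situation \ref{situation} and Corollary \ref{clearconnectedmain} without verifying the standing hypothesis that $[A]$ is a \emph{staircase core} character. The derivation of Lemma \ref{Aux2} explicitly uses core-ness: it is what guarantees that row $i'$ of $A$ is supported only at the main condition $(i',\overline{i_0})$, the minor condition $(i',i_0)$ and supplementary positions in columns $<i_0<\overline{j_0}$, so that $x^{-t}A$ differs from $A$ inside $\UR\cup\diag$ at the single position $(\overline{j_0},\overline{i_0})$. If $[A]$ is not a core (or not even staircase, which your case split permits: $M(A)$ can sit on an arm while some \emph{other} column carries two main conditions in lower rows), row $i'$ may have nonzero entries at columns $c$ with $\overline{j_0}\le c<\overline{i_0}$, and then the analysis producing \ref{leftmult4}--\ref{leftmult9} and hence Corollary \ref{clearconnectedmain} no longer applies to your chosen $x$. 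The paper itself reduces only to ``staircase'' rather than ``staircase core'' and so shares part of this imprecision, but it at least eliminates the non-staircase possibility before entering the arm case. The fix is easy and worth stating: first replace $[A]$ by the unique core $[A_0]$ in $\cO_A$ (this does not change $\main$ or $M$, and since $[A]=[A_0].w$ and $\lambda_u$ is right-$U$-linear, any $u$ that works for $[A_0]$ works for $[A]$), or at least reduce to staircase as the paper does before applying the arm argument.

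Two small imprecisions in the write-up of your arm case: the claim ``$\main(A)=\main(C)$'' should read ``the main conditions of $A$ and $C$ in rows $<i_0$ coincide'' (rows $i_0,\dots,\overline{j_0}$ of $C$ can have different main conditions), and ``its entries at the free positions $K$ are arbitrary'' overstates Corollary \ref{clearconnectedmain}, which only constrains $C$ on $J$ and at $(i_0,j_0)$; neither affects the logic since you only use those two constraints.
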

\begin{proof}
If $[A]$ is not staircase, we may use \ref{isotostaircase} and \ref{LeftTildeUandUAction} to make $[A]$ staircase by some restricted row operations. In particular this removes among possibly others all main conditions sitting on hook legs. From \ref{maximalposition} below one sees immediately, that this preserves $M(A)$ or makes it smaller. 

%%%%%%%%%%%%%%%%%%%%%%%%%%%%%%%%%%%%%%%%%%%

\iffalse
Let  $(i,j)\in \Limb^\circ(A)\cap\mc(A)$. 
Assume $(i,j)\in \cL(k,j)$ for some $(k,j)\in \mc(A)$. Then $k<i$ and $A_{kj}\neq 0$. Let $u=x_{ik}(\gamma)$ with $\gamma=A^{-1}_{kj}(A_{ij}-A_{ik})$. Then by \ref{isotostaircase}:
\[
u[A]=\tilde x_{ik}(\gamma)[A]=\theta(\gamma A_{ik})[C]\quad \text{ where } \quad C=\pi(\tilde x_{ki}(-\gamma)A).
\]
Since $\tilde x_{ki}(-\gamma)A$ is obtained from $A$ by adding $-\gamma$ times row $k$ to row $i$ and all the values to the right of position$(k,j)$ are zero,   $[C]$ can differ from $[A]$ only on positions $(i,r)$ with $r\leqslant j$. In particular,
\[
C_{ij}=A_{ij}-\gamma A_{kj}=A_{ij}-A^{-1}_{kj}(A_{ij}-A_{ik}) A_{kj}=0.
\]
Thus $\main(C)\subseteq ( \main(A)\setminus \{(i,j)\})\cup\{(i,r)\,|\,r<j\}$. Therefore
 $[C]$ is  either separated or $M(C)\prec M(A)$. Doing this procedure step by step, we can reduce $[A]$ to a staircase character $[C]$ satisfying $M(C)\prec M(A)$. 
\fi

%%%%%%%%%%%%%%%%%%%%%%%%%%%%%%%%%%%%%%%%%%%%

So from now on we may assume that $[A]$ is a staircase character but not main separated.
 Then $M(A)$ sits on an arm of some right main condition. More precisely,
Let $(\bar k, \bar j)=M(A)$ with $(i,k)\in \rmc(A)$. Then $\bar k<j<k$ and $A_{ik}\neq 0$
 Let $u=x_{ij}(-\lambda A_{ik}^{-1})$ and $\lambda_u[A]=\sum_{C\in V} \mu_{_C}[C]$:  
 \begin{equation}\label{maximalposition} 
\includegraphics[width=0.7\textwidth]{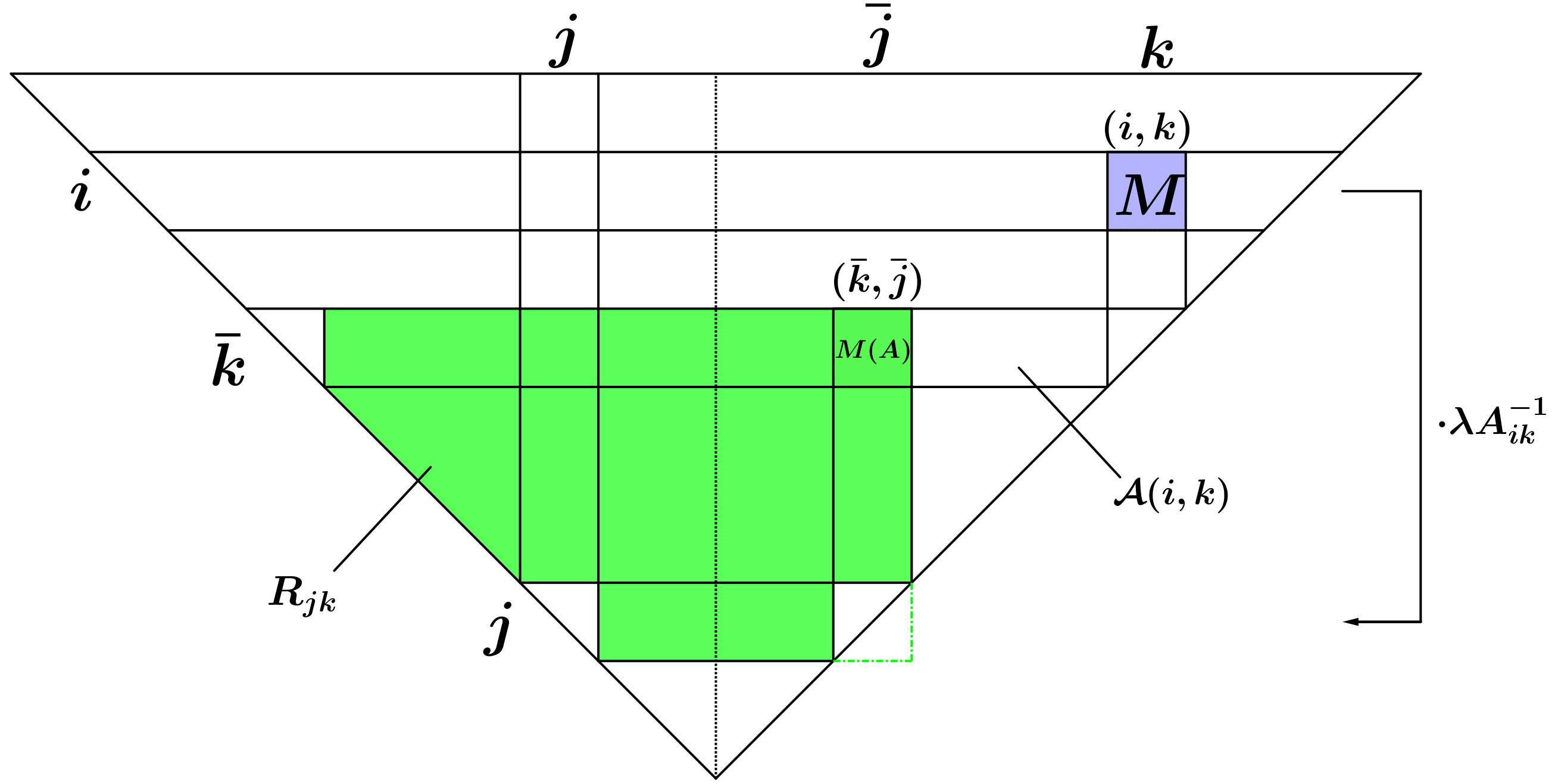}
\end{equation}
 
 Then this meets the situation \ref{situation}  and hence by \ref{clearconnectedmain} we have:
$\mu_C\neq 0$ implies:
\begin{enumerate}
\item[i)] $A$ und $C$ coincide at all positions $(a,b)\in J=\pUP\setminus R_{jk}$.
\item[ii)] $C_{\bar k \bar j} = A_{\bar k \bar j}\pm\lambda$.
\end{enumerate}
Hence choosing $\lambda=\mp A_{\bar k \bar j}$ we get $C_{\bar k \bar j}=0$. Therefore 
\[
M(C)\in \main(C)\subseteq (\main(A)\cup R_{jk})\setminus M(A).
\]
Since the positions in $R_{jk}\setminus M(A)$ are all to the left  of or lower of position $M(A)=(\bar k, \bar j)$, we derive that $M(C)$ is either main separated or $M(C)\prec M(A)$.
\end{proof}

\begin{Cor}
Let $A\in V$. Then $\C \cO_A$ is isomorphic to a submodule of a direct sum of orbit modules $\C \cO_C$ with $[C]\in \hat V$ either main separated or $M(C)\prec M(A)$.
\end{Cor}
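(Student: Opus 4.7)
The plan is to deduce the corollary almost directly from the preceding lemma by exploiting that, for $u\in U$, the left multiplication map $\lambda_u$ is a bijective right $\C U$-module endomorphism of $\C\hat V$.

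First, I would dispose of the trivial case: if $[A]$ is itself main separated, the identity map on $\C\cO_A$ witnesses the claim with $C=A$. Otherwise, the previous lemma supplies $u\in U$ with
\[
\lambda_u[A]=\sum\nolimits_{C\in V}\mu_{_C}[C]
\]
such that $\mu_{_C}\neq 0$ forces $[C]$ to be main separated or to satisfy $M(C)\prec M(A)$. Since $u$ is a unit in $\C U$, with two-sided inverse $u^{-1}$, the map $\lambda_u:\C U\to \C U$ is a right $\C U$-module automorphism; transporting through the $\C U$-isomorphism $f:\C U\to\C\hat V$ of Theorem \ref{monomial} yields a right $\C U$-module automorphism of $\C\hat V$. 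Restricting this automorphism to $\C\cO_A$ gives an injection whose image is the cyclic right $\C U$-submodule generated by $\lambda_u[A]$, so $\C\cO_A$ is isomorphic to that image.

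To finish, I would invoke that $\hat V$ is the disjoint union of its $U$-orbits, so that $\C\hat V=\bigoplus_\cO\C\cO$ as right $\C U$-modules (the sum running over all $U$-orbits $\cO\subseteq\hat V$; each $\C\cO$ is a $\C U$-submodule because the monomial action of $u\in U$ sends $[B]$ to a scalar multiple of $[B].u\in\cO$). The element $\lambda_u[A]$ lies in the finite subsum $\bigoplus_{\cO'}\C\cO'$ taken over those orbits $\cO'$ that meet $\{C\in V\,|\,\mu_{_C}\neq 0\}$, and therefore so does the cyclic submodule it generates. Since $\mc(C)$ is an orbit invariant (already noted in the paper), so are $\Limb^\circ(C)$, the value $M(C)$, and the property of being main separated; hence each contributing $\cO'$ contains a representative of the required form, completing the proof.

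There is no serious obstacle here: the combinatorial heavy lifting has been carried out in the preceding lemma, so only two essentially formal points need verification, namely the invertibility of $\lambda_u$ (immediate because $u\in U$ is a unit of $\C U$) and the orbit-invariance of the two conditions appearing in the conclusion (immediate from the orbit-invariance of $\mc$).
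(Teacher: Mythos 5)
Your proof is correct and takes essentially the same route as the paper's: apply the preceding lemma to obtain $u\in U$, note that $\lambda_u$ restricted to $\C\cO_A$ is an injective right $\C U$-homomorphism whose image lies in the direct sum of the orbit modules meeting $\{C\,|\,\mu_{_C}\neq 0\}$, and conclude via the orbit-invariance of $\main$. You simply spell out the two formal points (invertibility of $\lambda_u$ as $u$ is a unit of $\C U$; orbit-invariance of ``main separated'' and of $M(\cdot)$) that the paper leaves implicit.
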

\begin{proof}
Choose $u\in U$ such that $\lambda_u$ satisfies previous lemma. Then
$\lambda_u:\C \cO_A\longrightarrow \bigoplus_{C\in I}\C \cO_C$ is injective $\C U$-homomorphism, where $\lambda_u[A]=\sum_{C\in I} \mu_{_C}[C]$
with $I=\{C\in V\,|\, \mu_{_C}\neq 0\}$ and $[C]$ main separated or $M(C)\prec M(A)$ by previous corollary.
\end{proof}

We now can formulate and prove our main result of this section and our first main result of the paper:
\begin{Theorem}\label{Main1}
Every irreducible $\C U$-module is constituent of some orbit module $\C \cO_A$ with $[A]\in \hat V$ main separated.
\end{Theorem}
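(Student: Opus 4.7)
The plan is a short descent argument using the total order $\preceq$ on $\pUP$, built on top of the descent mechanism already established in the preceding lemma and corollary. First, because $f:\C U_{\C U}\longrightarrow \C \hat V$ of Theorem \ref{monomial} is a $\C U$-module isomorphism and $\C\hat V$ decomposes as a direct sum of orbit modules $\C\cO_A$ for $[A]\in\hat V$, any irreducible $\C U$-module $M$ must arise as an irreducible constituent of some $\C\cO_A$. By Remark \ref{isotostaircase}, every such orbit module is isomorphic to a staircase orbit module, so we may assume from the outset that $[A]$ is staircase.

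If $[A]$ is already main separated, there is nothing to prove. Otherwise the maximal element $M(A)\in \Limb^\circ(A)\cap\main(A)$ with respect to $\preceq$ is defined, and the preceding corollary supplies an injective $\C U$-homomorphism $\lambda_u:\C\cO_A \hookrightarrow \bigoplus_{C\in I}\C\cO_C$ in which every $[C]$ ($C\in I$) is either main separated or satisfies $M(C)\prec M(A)$. Since $M$ is a constituent of $\C\cO_A$ and $\C U$ is semisimple, $M$ is a constituent of at least one summand $\C\cO_{C}$ on the right hand side.

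From here I would induct with respect to $\preceq$ on $\pUP$. The base case is precisely when the chosen $[C]$ is main separated, in which case we are done. In the inductive step the non-separated $[C]$ strictly decreases $M(C)$; since $\preceq$ is a total order on the finite set $\pUP$, this descent terminates after finitely many steps and must end with a main separated character $[C']$ whose orbit module has $M$ as a constituent.

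The only potentially subtle step is the transition at the very start: knowing only that $M$ is a constituent of $\C\cO_A$, I must still pass $M$ to a constituent of a summand on the right hand side of the embedding from the preceding corollary. Because $\C U$ is semisimple by Maschke's theorem and $\lambda_u$ is an injective $\C U$-homomorphism, $\C\cO_A$ splits as a direct $\C U$-summand of $\bigoplus_{C\in I}\C\cO_C$, so every irreducible constituent of $\C\cO_A$ is an irreducible constituent of $\bigoplus_{C\in I}\C\cO_C$ and hence of some $\C\cO_C$. All of the real work sits in the preceding lemma and corollary, so beyond this sanity check the proof is purely a termination-of-descent statement.
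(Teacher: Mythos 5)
Your proposal is correct and takes essentially the same route the paper intends: the paper's proof is just the terse phrase ``This follows immediately by induction on $M(A)$,'' and you have fleshed out exactly the descent argument via the preceding corollary, the maximal-position order $\preceq$, and semisimplicity of $\C U$ that the authors are silently invoking. The only small (and harmless) redundancy is your initial reduction to staircase characters via Remark \ref{isotostaircase}: the preceding lemma and corollary already apply to arbitrary $[A]\in\hat V$, with the reduction to staircase built into the lemma's proof, so you could skip that step.
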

\begin{proof}
This follows immediately by induction on $M(A)$.
\end{proof}

\section{Core stabilizers}\label{sectionstab}
 In this section we shall determine the stabilizer $\Stab_{U}[A]=\{u\in U\,|\, [A].u=[A]\}$ for  staircase cores $[A]\in \hat V$. In [\cite{GJD}, 8.4] we did this already in the special case that  $[A]$ is a verge and showed in addition  [\cite{GJD}, 8.5]  that 
 \begin{equation}\label{stabsize}
 |\Stab_{U}[A] |=|\Stab_{U}[\verge(A)]|
 \end {equation}
 for all $A\in V$. More precisely, for a verge character $[A]\in \hat V$ we showed that 
 $\Stab_{U}[A]=U_{J(A)}$ where $J(A)=\pUP\setminus\Limb(A)$ is defined in \ref{lowerhook}.
 Now $U_{J(A)}$ is a pattern subgroup of $U$ and hence it is in particular the product of its intersections with the row groups consisting of matrices in $U$ whose only non-zero entries in $\pUP$ are on a particular fixed row. Recall that $\tilde n=n$ for types $\fC_n, \fD_n$ and $\tilde n = n+1$ for type $\fB_n$. It is easy to see that for $1\leq i<\tilde n$, the set 
 $\{(i,i+1), \ldots, (i, \widetilde i)\}\subseteq \Phi^+$ is closed, where $\widetilde i=\bar i-1$ for types $\fB_n, \fD_n$ and  $\widetilde i=\bar i$ for type $\fC_n$. Indeed
 \begin{equation}
 \cR_i=\langle  X_{ij}\,|\, i<j\leq \widetilde i\,\rangle
 \end{equation}
  is a pattern  subgroup of $U$, which is abelian provided $U$ is of type $\fB_n$ or $\fD_n$. For type $\fC_n$ inspecting definitions \ref{rootsubgroupsU} and \ref{Upattern} we see, that the root subgroups $X_{ij}$ and $X_{i\bar j}$ have the non trivial commutator subgroup $X_{i\bar i}$. However, every subgroup of $\cR_i$ generated by root subgroups including $X_{i\bar i}$ is a pattern subgroup in type $\fC_n$.

For a verge $[A]\in \hat V$ we have
  \begin{equation}\label{rowproduct}
 \Stab_{U}[A]=\prod\nolimits_{i=1}^n \big(\Stab_{U}[A]\cap  \cR_i\big)
 \end {equation}

 For staircase cores $[A]\in \hat V$, the stabilizer $\Stab_{U}[A]$ is not  a pattern subgroup of $U$ in general. However \ref{rowproduct} is still true. Our strategy to prove this consists of describing $\Stab_{U}[A]\cap \cR_i$ explicitly (as solution set of a homogeneous system of linear equations) and proving a row wise version of \ref{stabsize}, that is
\begin{equation}\label{stabsizerow}
 |\Stab_{U}[A]\cap \cR_i |=|\Stab_{U}[\verge(A)]\cap \cR_i|.
 \end {equation}
 As a consequence we conclude that 
 \begin{equation}\label{stabsizerowpro}
\left |\prod_{i=1}^n\big(\Stab_{U}[A]\cap \cR_i\big)\right |=\left|\prod_{i=1}^n\big(U_{J(A)}\cap  \cR_i\big)\right |=\left|\Stab_{U}[\verge(A)]\right|= |\Stab_{U}[A]|
 \end {equation}
 proving  \ref{rowproduct} for the staircase core $[A]\in \hat V$.
 
 For the moment let $[A]\in \hat V$ be arbitrary staircase character. We investigate $\Stab_{U}[A]\cap  \cR_i, 1\leq i <\tilde n$. Suppose there is  $1<k<i$ such that $A_{k\bar i}\neq 0$. Let $\alpha\in \F_q$, then by \ref{4action} we have $[A].x_{il}(\alpha)=[B]$ for $i<l\leq  \widetilde i$, where $B$ coincides with $A$ in all columns except column $i$ and $\bar l$. Indeed $B_{k \bar l}=A_{k \bar l}\pm \alpha A_{k \bar i}\not=A_{k \bar l}$ if  $\alpha\neq0$.   From this we conclude immediately that $\Stab_{U}[A]\cap  \cR_i=(1)$.

% See illustration as follows:
% \begin{center}
%\includegraphics[width=0.65\textwidth]{./GeoHom/notinstab}
%\end{center}

Assume that $U$ is of type $\fC_n$ and that $A_{k\bar i}=0$ for $1\leq k\leq  i-1$ and $A_{i \bar i}\neq 0$. Inspecting \ref{illTruncatedColumnOperationG} we see that $X_{i \bar i}$ acts on $[A]$ as linear character and hence is contained in $\Stab_{U}[A]$. Moreover if $i<l<\bar i, \alpha\in \F_q$ and $[B]=[A].x_{il}(\alpha)$, then $B$ differs from $A$ only on column $i$ and $\bar l$, again by \ref{illTruncatedColumnOperationG}. In particular $B_{i \bar l}=A_{i \bar l}\pm \alpha A_{i \bar i}\not=A_{i \bar l}$, provided $\alpha\neq 0$. Thus we conclude $\Stab_{U}[A]\cap  \cR_i=X_{i \bar i}$. We have shown:
 \begin{Lemma}\label{typeCstab}
 Let $[A]\in \hat V$ and $1\leq i<\tilde n$. Suppose column $\bar i$ of $A$ is not a zero column. Then $\Stab_{U}[A]\cap \cR_i=(1)$, if $A_{k \bar i}\neq 0$ for some 
$1\leq k\leq  i-1$. If $U$ is of type $\fC_n$ and  $A_{i \bar i}\neq 0$ is the only non-zero entry in column $\bar i$ of $A$, then  $\Stab_{U}[A]\cap \cR_i=X_{i \bar i}$. \hfill$\square$
 \end{Lemma}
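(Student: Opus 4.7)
The plan is to combine the unique ordered-product decomposition of $\cR_i$-elements afforded by Theorem \ref{anyorder} with the explicit column-operation pictures in \ref{illTruncatedColumnOperationG}. The key structural observation is that the ``$.$''-action of a generator $x_{il}(\alpha)$ with $i<l<\bar i$ modifies only columns $i$ and $\bar l$ of the matrix, while in type $\fC_n$ the generator $x_{i\bar i}(\alpha)$ modifies only column $i$. Consequently column $\bar i$ itself is left invariant by every element of $\cR_i$, so the rank-one update produced at column $\bar l$ by $x_{il}(\alpha_l)$ always involves the \emph{original} entries of column $\bar i$ of $A$.

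For the first assertion, I would write a candidate $u\in \cR_i$ uniquely as $u = \prod_{i<l\leq \widetilde i}x_{il}(\alpha_l)$ in order of increasing $l$, placing the $x_{i\bar i}$-factor last in type $\fC_n$. If some $\alpha_{l_0}$ with $i<l_0<\bar i$ is nonzero, choose $l_0$ maximal with this property; then the factor $x_{il_0}(\alpha_{l_0})$ changes position $(k,\bar{l_0})$ by $\pm\alpha_{l_0}A_{k\bar i}\neq 0$, and the later factors are either trivial or (in type $\fC_n$) lie in $X_{i\bar i}$ and touch only column $i$. Hence $[A].u\neq [A]$. The remaining possibility is $u = x_{i\bar i}(\alpha_{\bar i})$ with $\alpha_{\bar i}\neq 0$ (only in type $\fC_n$); then the change at $(k,i)$ is $-\alpha_{\bar i}A_{k\bar i}\neq 0$, and again $u\notin \Stab_U[A]$. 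This proves $\Stab_U[A]\cap \cR_i=(1)$.

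For the second assertion, assume $U$ is of type $\fC_n$, column $\bar i$ of $A$ vanishes above row $i$, and $A_{i\bar i}\neq 0$. I first check $X_{i\bar i}\subseteq \Stab_U[A]$: the action of $x_{i\bar i}(\alpha)$ only touches column $i$ at rows $c<i$, and there column $\bar i$ of $A$ is zero by hypothesis. For the reverse inclusion I take $u = \prod x_{il}(\alpha_l)\in \Stab_U[A]\cap \cR_i$ and repeat the maximal-index argument, now inspecting row $i$ in column $\bar{l_0}$: the change there would be $\pm\alpha_{l_0}A_{i\bar i}\neq 0$, contradicting $u\in \Stab_U[A]$. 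Hence every $\alpha_l$ with $l<\bar i$ must vanish, so $u\in X_{i\bar i}$.

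The only real subtlety is the non-abelianness of $\cR_i$ in type $\fC_n$; this is neutralised by placing the central $X_{i\bar i}$-factor last in the ordered product, after which the obstruction at column $\bar{l_0}$ cannot be cancelled by any further $\cR_i$-action. Everything else is a direct reading of the column-operation illustrations.
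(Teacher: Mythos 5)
Your proof is correct and follows essentially the same route as the paper's: both read off the effect of $\cR_i$-elements from the restricted column operations of \ref{illTruncatedColumnOperationG}, noting that column $\bar\imath$ is never altered and that column $\bar l$ is touched by exactly one generator $x_{il}(\alpha_l)$, so each $\alpha_l$ is forced to vanish in turn (leaving only $X_{i\bar\imath}$ in type $\fC_n$). The only cosmetic difference is that you make the product decomposition via Theorem \ref{anyorder} explicit and pick $l_0$ maximal, whereas the paper checks single generators and concludes at once; your maximality choice is actually unnecessary, since the column-isolation observation already shows that no other factor can interfere at column $\bar{l_0}$, regardless of where $l_0$ sits.
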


%%%%%%%%%%%%%%%%%%%%%%%%%%%%%%%%%%%
\iffalse
 Then in type $\fC_n$ we have in particular $A_{i \bar i}=0$ and $X_{i \bar i}$ acts trivially on $[A]$. 
 %Thus $\Stab_{U}[A]\cap  \hat \cR_i=\big(\Stab_{U}[A]\cap   \cR_i\big)\times X_{i \bar i}$. 
  Note that for type $\fC_n$ the set $\CC$ is closed in $\pUP$ (see \ref{Upattern})  and $U_{\CC} = \prod_{1\leq i <n}X_{i\bar i}$ is an abelian subgroup of $U$. Moreover $\Stab_U[A]\cap U_{\CC}$ is a direct product  of the root subgroups $X_{i\bar i}$, such that $A_{i\bar i}\neq 0$ or column $\bar i$ is the zero column in $A$. We hence define:
 \fi
 %%%%%%%%%%%%%%%%%%%%%%%%%%%%%%%%%%%
 
\begin{Defn}\label{AntDiagStab} 
 For $1\leq i <\tilde n$ define $\Stab^i_U[A]=\Stab_U[A]\cap \cR_i.$\hfill $\square$

 %%%%%%%%%%%%%%%%%%%%%%%%%%%%%%%%
 \iffalse
Define $s_{ad}=|J(A)\cap \CC|$ in type $\fC_n$ and $s_{ad}=0$ in types $\fB_n, \fD_n$ and the group $T(A)$ of order $q^{s_{ad}}$ to be $T(A)= \Stab_U[A]\cap U_{\CC}$. 
(So in types $\fB_n, \fD_n$ we have $T(A)=(1)$).\hfill$\square$
\fi
%%%%%%%%%%%
\end{Defn}

 Fix $1\leq i<\tilde n$ such that column $\bar i$ of $A$ is a zero column. Let $i<l\leq\widetilde i$ and $\alpha\in \F_q$. Observing \ref{rootsubgroupsU} and \ref{illTruncatedColumnOperationG}  we get immediately (using notation \ref{typeArootsubgroups}):
 
 \begin{Lemma}\label{sameasuni}
 $[A].x_{il}(\alpha)=[A].\tilde x_{il}(\alpha)$. Hence in particular $[A].x_{il}(\alpha)=[B]\in \hat V$ differs from $A$ only in column $i$ which is obtained from $A$ by subtracting $\alpha$ times column $l$ from column $i$ and projecting the resulting matrix into $V.$\hfill$\square$
 \end{Lemma}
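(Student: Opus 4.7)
The plan is to substitute the product decomposition of $x_{il}(\alpha)$ from Definition \ref{rootsubgroupsU} into $[A].x_{il}(\alpha)$ and apply the ``$\cdot$''-action factor by factor, using the restricted column-operation description of Proposition \ref{TruncatedColumnOperationG}. Since the ``$\cdot$''-action of $\widetilde U = U_N(q)$ on $\hat V$ is a right group action, the lemma will reduce to the observation that every factor other than the leading $\tilde x_{il}(\alpha)$ has column $\bar i$ as its source column, and therefore acts trivially on any character whose $\bar i$-th column is zero.

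Concretely, I would organise the argument by case. In types $\fB_n$ and $\fD_n$ with $l\neq n+1$, and in type $\fC_n$ with $l\leq n$ or with $n<l<\bar i$, Definition \ref{rootsubgroupsU} gives a two-factor decomposition $x_{il}(\alpha)=\tilde x_{il}(\alpha)\,\tilde x_{\bar l\,\bar i}(\gamma)$ for an explicit $\gamma\in\F_q$ depending on type and on $\alpha$. In type $\fB_n$ with $l=n+1$ it is the three-factor product $\tilde x_{i,n+1}(\alpha)\,\tilde x_{n+1,\bar i}(-\alpha)\,\tilde x_{i\bar i}(-\tfrac{1}{2}\alpha^{2})$, and in type $\fC_n$ with $l=\bar i$ there is nothing to prove since $x_{i\bar i}(\alpha)=\tilde x_{i\bar i}(\alpha)$. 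By Proposition \ref{TruncatedColumnOperationG} the leading factor $\tilde x_{il}(\alpha)$ modifies only column $i$ of the matrix representing $[A]$, subtracting $\alpha$ times column $l$ and projecting into $V$; in particular column $\bar i$ of the intermediate character $[A].\tilde x_{il}(\alpha)$ is still the zero column.

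Every remaining factor $\tilde x_{rs}(\beta)$ in each decomposition satisfies $s=\bar i$, as is visible by direct inspection of the cases above. By Proposition \ref{TruncatedColumnOperationG} it acts by adding $-\beta$ times column $\bar i$ to column $r$ followed by projection into $V$, which is the identity operation whenever column $\bar i$ of the current character is zero. Composing the factors therefore yields $[A].x_{il}(\alpha)=[A].\tilde x_{il}(\alpha)$, and the explicit description of $[B]$ in the second sentence of the lemma is then precisely what Proposition \ref{TruncatedColumnOperationG} says about the single factor $\tilde x_{il}(\alpha)$. There is no real obstacle here beyond bookkeeping of the handful of decomposition cases in Definition \ref{rootsubgroupsU}; the essential point is the single observation that the zero column $\bar i$ of $A$ is preserved by the leading restricted column operation and absorbs all subsequent ones.
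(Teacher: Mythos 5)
Your proof is correct and takes exactly the route the paper intends by its remark that the claim follows ``immediately'' from Definition \ref{rootsubgroupsU} and the restricted column operations: decompose $x_{il}(\alpha)$ into its $\tilde x$-factors, note that the leading factor only changes column $i$ (leaving the zero column $\bar i$ intact), and observe that every trailing factor $\tilde x_{rs}(\beta)$ has $s=\bar i$ and therefore acts trivially. You have simply supplied the case-by-case bookkeeping that the paper leaves to the reader.
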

 
 We define the submatrix $A_i$ of $A$ to be the rectangular $(i-1)\times (\widetilde i-i)$-matrix 
\begin{equation}
A_i=\sum_{1\leq \nu<i<\mu\leq\widetilde i} A_{\nu \mu}e_{\nu \mu}.
\end{equation}
That is $A_i$ is obtained by cutting off rows $i,i+1,\ldots, \tilde n, \tilde n+1, \ldots, N$
 and columns $1, 2,\ldots,i, \widetilde i+1, \widetilde i+2, \ldots, N$ from $A$:
\begin{center}
\includegraphics[width=0.75\textwidth]{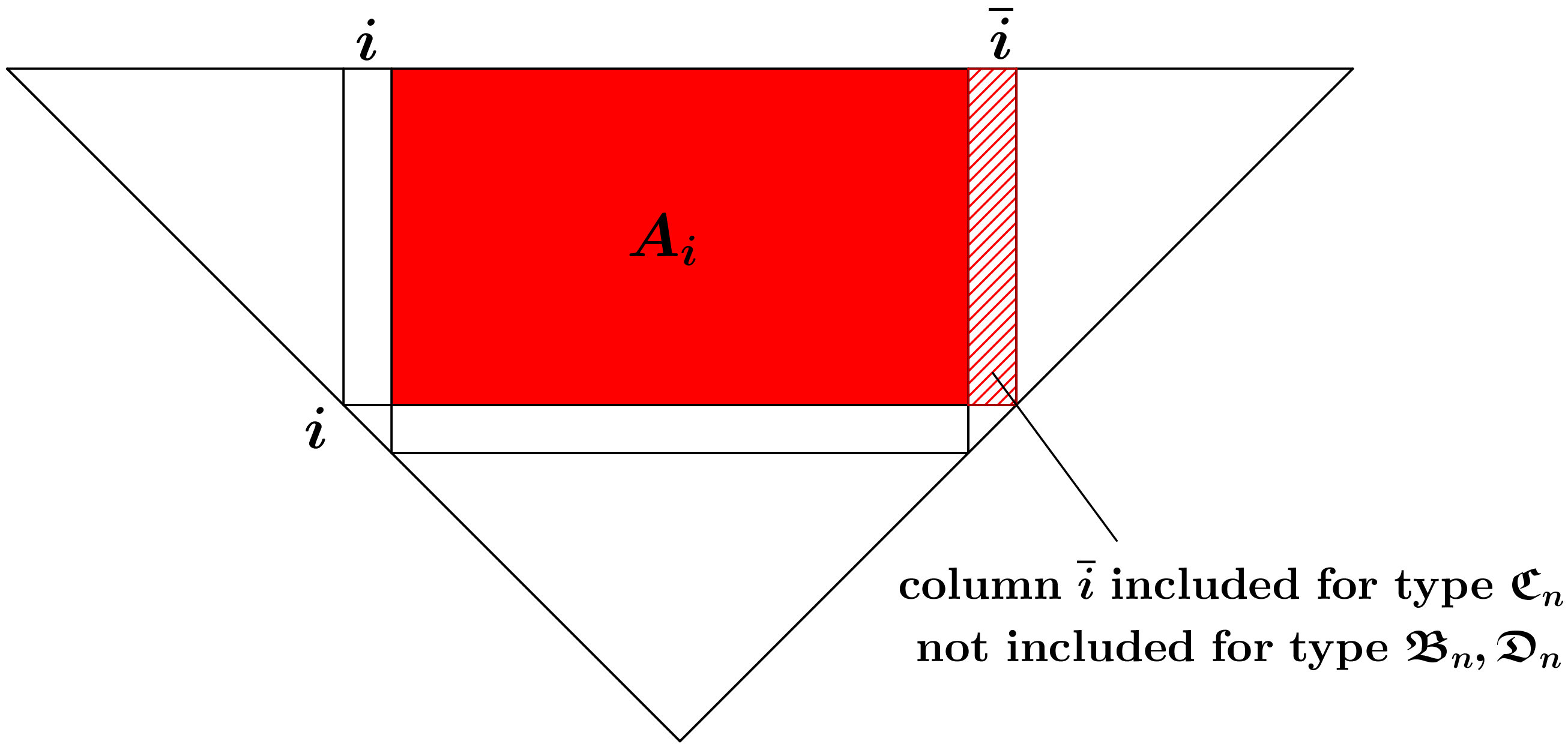}
\end{center}
\vspace{-0.4cm}
 \begin{equation}
\text{\footnotesize{Illustration of the submatrix $A_i$}.}\quad\quad\quad
\end{equation}

Let $\mathfrak a_{i+1}, \ldots, \mathfrak a_{\,\widetilde i}$ be the columns of $A_i$, and $\alpha_{i+1}, \ldots, \alpha_{\,\widetilde i}\in \F_q$. Define $\underline{\alpha}\in \F_q^{\widetilde i-i}$ to be the column vector $\underline{\alpha}=(\alpha_{i+1}, \ldots, \alpha_{\,\widetilde i})^t$. Then by basic linear algebra the column vector 
$\mathfrak c_i:= A_i \underline\alpha \in \F_q^{i-1}$ is obtained as linear combination of the columns of $A_i$ with coefficients 
$\alpha_j, i+1 \leq j\leq \widetilde i$, that is    
\begin{equation}\label{LinAlg}
\mathfrak c_i= A_i \underline\alpha=\alpha_{i+1}\mathfrak a_{i+1}+\cdots +\alpha_{\,\widetilde i}\mathfrak a_{\,\widetilde i}
\end{equation}
If $\mathfrak x_i=(\gamma_{1}, \ldots, \gamma_{i-1})^t\in \F_q^{i-1}$ set $\hat {\mathfrak x}_i=(\gamma_{1}, \ldots, \gamma_{i-1}, 0, \ldots,0)^t\in \F_q^N$. Moreover, for $\underline{\alpha}\in \F_q^{\widetilde i-i}$ as above we set 
$x(\underline\alpha)=x_{i, i+1}(\alpha_{_{i+1}})\cdots x_{i, \widetilde i}(\alpha_{\,_{\widetilde i}})\in \cR_i$, and $\tilde x(\underline\alpha)=\tilde x_{i, i+1}(\alpha_{_{i+1}})\cdots  \tilde  x_{i, \widetilde i}(\alpha_{\,_{\widetilde i}})$. Then  \ref{illTruncatedColumnOperationG} and \ref{sameasuni} imply:
\begin{Lemma}\label{columnvector}
$[A].x(\underline\alpha)=[A].\tilde x(\underline\alpha)=[B],$ where $B$ coincides with $A$ everywhere besides in column $i$. This is  given as $\hat {\mathfrak b}_i=\hat{\mathfrak a}_i-\hat{\mathfrak c}_i
$
where ${\mathfrak a}_i = (A_{1i},\ldots , A_{i-1,i})^t$. Thus $\hat{\mathfrak a}_i$ is the $i$-th column of $A$, and $\mathfrak c_i$ is defined as in \ref{LinAlg}.
\end{Lemma}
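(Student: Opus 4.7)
The plan is to prove the lemma by iterating Lemma \ref{sameasuni} and then tracking the cumulative column operations via Proposition \ref{TruncatedColumnOperationG}.

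\textbf{Step 1: Reducing to the $\widetilde U$-action.} First I would show $[A].x(\underline\alpha) = [A].\tilde x(\underline\alpha)$ by induction on the number of factors. Lemma \ref{sameasuni} gives $[A].x_{i,i+1}(\alpha_{i+1}) = [A].\tilde x_{i,i+1}(\alpha_{i+1}) = [A^{(1)}]$, and moreover tells us that $A^{(1)}$ differs from $A$ only in column $i$. In particular, column $\bar i$ of $A^{(1)}$ is still a zero column, so the hypothesis of Lemma \ref{sameasuni} is preserved and I can apply it again with $A^{(1)}$ in place of $A$ to replace $x_{i,i+2}(\alpha_{i+2})$ by $\tilde x_{i,i+2}(\alpha_{i+2})$. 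Continuing this process along all factors in the product defining $x(\underline\alpha)$ yields the first equality.

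\textbf{Step 2: Computing $B$ explicitly.} Next, by Proposition \ref{TruncatedColumnOperationG}, each factor $\tilde x_{i,l}(\alpha_l)$ acts by subtracting $\alpha_l$ times column $l$ from column $i$, then truncating outside $\pUP$. Since every factor acts only on column $i$, the columns indexed $i+1, \ldots, \widetilde i$ are never altered during the sequence of operations; the columns being subtracted are therefore the original columns $\mathfrak a_{i+1}, \ldots, \mathfrak a_{\widetilde i}$ of $A_i$. The $\pUP$-positions in column $i$ are precisely the rows $1, \ldots, i-1$, so summing contributions gives
\[
\mathfrak b_i \;=\; \mathfrak a_i - \alpha_{i+1}\mathfrak a_{i+1} - \cdots - \alpha_{\widetilde i}\mathfrak a_{\widetilde i} \;=\; \mathfrak a_i - \mathfrak c_i,
\]
which is equivalent to $\hat{\mathfrak b}_i = \hat{\mathfrak a}_i - \hat{\mathfrak c}_i$ after padding rows $i, \ldots, N$ with zeros (as dictated by truncation to $V$).

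\textbf{Potential obstacles.} The only real thing to check is that the range $i < l \leq \widetilde i$ behaves uniformly across types. For types $\fB_n$ and $\fD_n$ we have $\widetilde i = \bar i - 1$ so $l < \bar i$; for type $\fC_n$ we have $\widetilde i = \bar i$, and the extra case $l = \bar i$ is harmless since $\mathfrak a_{\bar i}$ is the zero column of $A_i$ (as column $\bar i$ of $A$ is zero) and the extra factors appearing in the expansion of $x_{il}(\alpha)$ beyond $\tilde x_{il}(\alpha)$ in Definition \ref{rootsubgroupsU} (including the $l = n+1$ factor in type $\fB_n$) always contribute column operations that add multiples of the zero column $\bar i$ elsewhere, hence act trivially. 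This is exactly the content of Lemma \ref{sameasuni} applied factor by factor, so no further case analysis is required. The bookkeeping for the projection to $V_{\pUPs}$ is built into the notation $\hat{\,\cdot\,}$.
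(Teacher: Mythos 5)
Your proof is correct and takes essentially the same route as the paper: iterate Lemma \ref{sameasuni} factor by factor to reduce to the $\widetilde U$-action, then read off $\mathfrak b_i=\mathfrak a_i-A_i\underline\alpha$ via \ref{LinAlg} and \ref{TruncatedColumnOperationG}. The paper's proof is just the one-line assertion that \ref{sameasuni} and \ref{LinAlg} imply the claim; you have simply made explicit the bookkeeping (that column $\bar i$ stays zero along the iteration and that columns $i+1,\dots,\widetilde i$ are never touched) which the paper leaves tacit.
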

\begin{proof}
Lemma \ref{sameasuni} and \ref{LinAlg}  imply immediately, that $B$  coincides with $A$ except in column $i$, whose first $i-1$ entries  (the only non trivial ones) are given as 
\begin{equation}
{\mathfrak b}_i =
\begin{pmatrix}
B_{1 i}\\B_{2 i}\\ \vdots \\B_{i-1,i}
\end{pmatrix}=
\begin{pmatrix}
A_{1 i}\\A_{2 i}\\ \vdots \\A_{i-1,i}
\end{pmatrix}-
A_i
\begin{pmatrix}
\alpha_{_{i+1}}\\\alpha_{ _{i+2}}\\ \vdots \\ \alpha_{\,_{\widetilde i}}
\end{pmatrix}.
\end{equation}
\end{proof}

We wanted to exhibit elements of $\Stab_{U}[A]$. These arise in row  group $\cR_i$ by requiring in lemma \ref{columnvector}   that $B=A$. Thus we get immediately:
\begin{Cor}\label{stabRi}
Let $[A]\in \hat V$ be staircase and $1\leq i <\tilde n$ such that column $\bar i$ of $A$ is zero. Then 
\[
 \Stab^i_U[A] = \Stab_{U}[A]\cap \cR_i=\{x(\underline{\alpha})\,|\,A_i \underline{\alpha}=0, \, \underline{\alpha}\in\F_q^{\widetilde i-i}\}.
\]
\end{Cor}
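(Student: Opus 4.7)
The plan is to read off the corollary directly from Lemma \ref{columnvector}, using the unique factorization of elements of $\cR_i$ provided by Theorem \ref{anyorder}.

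First I would observe that $\cR_i$ is the pattern subgroup of $U$ associated to the closed subset $\{(i,i+1),\ldots,(i,\widetilde i)\}\subseteq\pUP$, so by Theorem \ref{anyorder} every element of $\cR_i$ has a unique expression of the form $x(\underline\alpha)=x_{i,i+1}(\alpha_{i+1})\cdots x_{i,\widetilde i}(\alpha_{\widetilde i})$ for some $\underline\alpha\in\F_q^{\widetilde i-i}$. Thus the map $\underline\alpha\mapsto x(\underline\alpha)$ is a bijection from $\F_q^{\widetilde i-i}$ onto $\cR_i$, and the only remaining question is which values of $\underline\alpha$ fix $[A]$ under the monomial right action.

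Next I would apply Lemma \ref{columnvector}: for every $\underline\alpha\in\F_q^{\widetilde i-i}$ we have $[A].x(\underline\alpha)=[B]$, where $B$ agrees with $A$ outside column $i$ and the non-trivial part of column $i$ is given by $\mathfrak b_i=\mathfrak a_i-A_i\underline\alpha$. Hence $x(\underline\alpha)\in\Stab_U[A]$ if and only if $\mathfrak b_i=\mathfrak a_i$, if and only if $A_i\underline\alpha=0$, which is exactly the description in the statement.

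There is no real obstacle here; all the work was packaged into Lemma \ref{columnvector}. The only mild subtlety worth verifying is the type $\fC_n$ case, in which $\widetilde i=\bar i$ and so $\cR_i$ contains the extra root subgroup $X_{i\bar i}$. Under the standing hypothesis that column $\bar i$ of $A$ is zero, the $\bar i$-th column of the submatrix $A_i$ is identically zero, so the coefficient $\alpha_{\bar i}$ is automatically unconstrained by $A_i\underline\alpha=0$; this matches the fact that $X_{i\bar i}$ acts trivially on $[A]$ under our hypothesis, as already visible from illustration \ref{illTruncatedColumnOperationG}. Hence the formula for $\Stab^i_U[A]$ holds uniformly across types $\fB_n,\fC_n,\fD_n$.
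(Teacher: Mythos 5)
Your argument is correct and is essentially the same one-line argument the paper uses: apply Lemma \ref{columnvector} and require $B=A$, i.e.\ $A_i\underline\alpha=0$, with the unique normal form of Theorem \ref{anyorder} guaranteeing that each element of $\cR_i$ is of the form $x(\underline\alpha)$ for exactly one $\underline\alpha$. Your side-check for type $\fC_n$ (that the $\bar i$-th column of $A_i$ vanishes under the hypothesis, so $\alpha_{\bar i}$ is free, consistent with $X_{i\bar i}\subseteq\Stab_U[A]$) is also the content of the remark the paper places immediately after the corollary.
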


\begin{Remark}
We remark in passing that in type $\fC_n$ column $\bar i$ of $A_i$ is a zero column by our assumption on $i$. As a consequence we obtain that $A_i\underline{\alpha}=0$ for $\underline{\alpha}=(0,0, \ldots,0,\alpha)^t, \alpha\in \F_q$, and hence $x(\underline{\alpha})=x_{i,\bar i}(\alpha)$ is contained in $\Stab_U^i[A]$, as already stated in \ref{typeCstab}. \hfill $\square$
\end{Remark}

 Let
$\{(i_1, j_1), (i_2, j_2), \ldots, (i_r, j_r)\}$
be the set of main conditions contained in the set of positions of $A_i$, ordered from left to right. Thus $1\leq i_\nu<i, (\nu=1, \ldots, r)$ and $i<j_1<j_2<\cdots<j_r<\bar i$.
Since all entries in rows $i_\nu$ ($\nu=1, \ldots, r$) to the right of the main conditions $(i_\nu, j_\nu)$ are zero, the rank of the matrix $A_i$ is at least $r$ and hence the dimension of the solution space of $A_i \underline\alpha=0$ is at most $\widetilde i-i-r$.

We now turn to the special case, where $[A]\in \hat V$ is a staircase core character. Note that then the condition of column $\bar i$ being a zero column in $A$ is equivalent to the assumption, that there is no right main condition on column $\bar i$ or equivalently, that row $i$ is not an arm $\cA(k, \bar i)$ for some main condition $(k, \bar i)\in \rmc(A)$.
Recall that all entries between right main and minor conditions are zero. Thus for $1\leq i<\tilde n$ such that $(k,\bar i)\notin \rmc(A)$ for all $1\leq k<i$, we have the following situation :
\begin{equation}\label{upcore}
\includegraphics[width=0.7\textwidth]{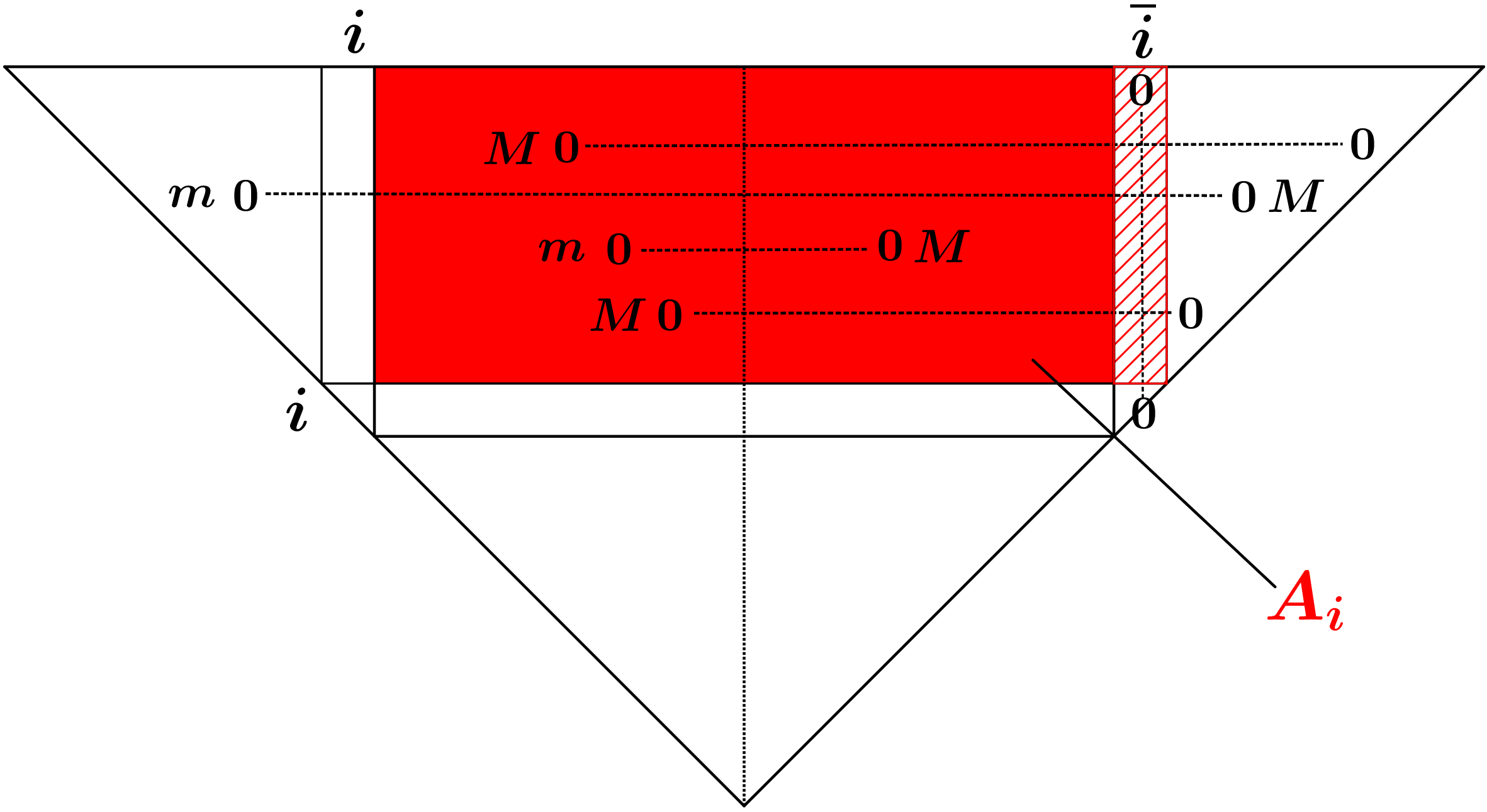}
\end{equation}

Obviously all rows of $A_i$ not containing a main condition are zero rows. Since $[A]\in \hat V$ is staircase, the main conditions in $A$ are in pairwise different columns and hence the rank of $A_i$ is precisely $r$ where again $r$ is the number of main conditions in $A_i$. Consequently the dimension of the solution space of the homogeneous matrix equation $A_i\underline\alpha=0$ in \ref{upcore} is $\widetilde i-i-r$. Note that this depends only on $\verge(A)$ not on the values of matrix entries of $A$ on minor or supplementary conditions. This implies
\begin{equation}\label{stabiajai}
\left |\Stab^i_{U}[A]\right |=\left|\Stab^i_{U}[\verge(A)]\right|=\left|U_{J(A)}\cap \cR_i\right |=q^{\widetilde i-i-r}.
\end{equation}

\begin{Theorem}\label{describestabRi}
Let $[A]\in \hat V$ be a staircase core. For $1\leq i<\tilde n$, let $$A_i=\sum_{\stackrel{1\leq a<i}{i<b\leq\widetilde i}} A_{ab} e_{ab}$$
and let $\fS_i$ be the solution space in $\F_q^{\widetilde i-i}$ of systems $A_i \underline\alpha=0$, the vector $\underline \alpha$ in $\F_q^{\widetilde i-i}$ labelled as $\underline\alpha=(\alpha_{i+1}, \ldots, \alpha_{\,\widetilde i})^t$.
Setting $x(\underline\alpha)=x_{i, i+1}(\alpha_{_{i+1}})\cdots x_{i, \widetilde i}(\alpha_{\,_{\widetilde i}})\in \cR_i$, we obtain:
\[
\Stab^i_U[A]=\Stab_U[A]\cap \cR_i=\begin{cases}
(1),& \text{ for row } i\subseteq \Limb(A) \text{ and $U$ is of type $\fB_n$ or $\fD_n;$}\\
X_{i\,\bar i},&\text{ $(i,\bar i)\in \main(A)$ and $U$ is of type $\fC_n;$}\\
\{x(\underline\alpha)\,|\, \underline\alpha\in \fS_i\}, & \text{ otherwise}.
\end{cases}
\]
Moreover every element $x\in \Stab_U[A]$ can be uniquely written as product $\prod_{i=1}^{\tilde n-1} y_i$ with $y_i\in \Stab_{U}^i[A]$.
% and $z\in T(A)$, where $T(A)$ is defined in \ref{AntDiagStab}. 
\end{Theorem}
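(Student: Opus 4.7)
The plan is to establish the three case formulas by invoking the results developed earlier in this section, and then to derive the unique product decomposition by a counting argument.

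First, for each $1\leq i<\tilde n$ I would observe that in a staircase core $[A]$ the only possible nonzero entries of column $\bar i$ arise from main conditions: because $i<\tilde n$ forces $\bar i>\tilde n$, neither minor nor supplementary positions can occur in column $\bar i$ (their columns sit weakly below $\tilde n$). The three alternatives in the statement are therefore exhaustive, determined by whether column $\bar i$ (i) contains a main $(k,\bar i)$ with $k<i$, (ii) contains $(i,\bar i)\in\main(A)$ in type $\fC_n$, or (iii) is a zero column. In case (i), $A_{k\bar i}\neq 0$ for some $k<i$, so Lemma \ref{typeCstab} gives $\Stab^i_U[A]=(1)$. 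In case (ii), the staircase property together with the core assumption forces $A_{i\bar i}$ to be the unique nonzero entry of column $\bar i$, and the second half of Lemma \ref{typeCstab} gives $X_{i\bar i}$. In case (iii), Corollary \ref{stabRi} directly yields the $\fS_i$-formula.

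For the unique product decomposition, by Theorem \ref{anyorder} each $x\in U$ writes uniquely as $x=y_1y_2\cdots y_{\tilde n-1}$ with $y_i\in\cR_i$, once we fix a linear order on $\pUP$ that lists the positions of each row consecutively. Since products of stabilisers stabilise, the set product $\prod_i\Stab^i_U[A]$ is contained in $\Stab_U[A]$, and Theorem \ref{anyorder} makes the map $(y_1,\dots,y_{\tilde n-1})\mapsto y_1\cdots y_{\tilde n-1}$ injective. To promote this inclusion to equality I would compare orders: combining \eqref{stabsize} with the identification $\Stab_U[\verge(A)]=U_{J(A)}$ from [\cite{GJD}, 8.4] gives
\[
|\Stab_U[A]|=|\Stab_U[\verge(A)]|=|U_{J(A)}|=\prod_{i=1}^{\tilde n-1}\bigl|U_{J(A)}\cap\cR_i\bigr|,
\]
the last equality because $U_{J(A)}$ is a pattern subgroup and hence decomposes row-wise by Theorem \ref{anyorder}. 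All that then remains is the per-row identity $|\Stab^i_U[A]|=|U_{J(A)}\cap\cR_i|$ in each of the three cases.

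This per-row size match is the main obstacle. Case (i) is immediate, since the whole of row $i$ in $\pUP$ is covered by the arm $\cA(k,\bar i)\subseteq\Limb(A)$, forcing $J(A)\cap\text{row}\,i=\emptyset$ and both sides to equal $1$. Case (iii) is exactly \eqref{stabiajai}. Case (ii) is the most delicate and is where attention to the type-$\fC_n$ anti-diagonal is required: one verifies that the arm $\cA(i,\bar i)$ covers all of $\{(i,j):i<j<\bar i\}\subseteq\UP$ while $(i,\bar i)$ itself escapes both $\Limb^\circ(A)$ (being in $\CC$ rather than $\UP$) and $\fC(A)$ (since $\fC(A)$ sees only right mains in $\UP$, whereas the unique main in column $\bar i$ is $(i,\bar i)$ on the anti-diagonal). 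Hence $U_{J(A)}\cap\cR_i=X_{i\bar i}$ has order $q$, matching the Lemma \ref{typeCstab} conclusion. Once these three verifications are in place, the counting assembles the desired decomposition.
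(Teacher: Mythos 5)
Your proof is correct and follows essentially the same route as the paper: compute the row stabilizers $\Stab^i_U[A]$ case by case from Lemma~\ref{typeCstab} and Corollary~\ref{stabRi}, then show that the set product $\prod_i\Stab^i_U[A]\subseteq\Stab_U[A]$ already has the right cardinality $|U_{J(A)}|=|\Stab_U[A]|$, using the pattern-subgroup row decomposition and the per-row size match of \ref{stabiajai}. Your write-up is somewhat more explicit than the paper's (which simply cites \ref{stabiajai} and \ref{typeCstab} without spelling out the trichotomy), and your opening observation — that for $i<\tilde n$ the column $\bar i$ lies entirely in $\trir$ and hence can carry at most one nonzero entry, namely a main condition — is a worthwhile clarification that makes the exhaustiveness of the three cases transparent. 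Two very small imprecisions: in your case~(i) size check, for type $\fC_n$ the arm $\cA(k,\bar i)$ misses the anti-diagonal position $(i,\bar i)$, which is instead covered by $\fC(A)\subseteq\Limb(A)$ (the conclusion $J(A)\cap\row i=\emptyset$ still holds); and in type $\fC_n$ the row $i=n$ of $\pUP$ contains the single position $(n,\bar n)\in\CC$, so strictly speaking the running product should be taken over all nonempty rows (as in equation \ref{rowproduct}), not just $1\leq i\leq\tilde n-1$ — this matches what the counting actually requires.
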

\begin{proof}
Since $U$ is the product  $\prod_{(i,j)\in \pUPs}X_{ij}(\alpha), $ we see that
\[
\Stab_U[A]\supseteq \prod_{1\leq i <\tilde n} \Stab^i_U[A]=\{u_1 \cdots u_{\tilde n-1}\,|\, u_i\in  \Stab^i_U[A], 1\leq i<\tilde n\} ,
\]
which contains $ \prod_{1\leq i <\tilde n}\left| \Stab^i_U[A]\right| =  \prod_{1\leq i <\tilde n}\left|U_{J(A)}\cap \cR_i\right|=|U_{J(A)}|$ many elements by \ref{stabiajai} and \ref{typeCstab}. Since  $|\Stab_{U}[A]|=|U_{J(A)}|$ by [\cite{GJD}, 8.5], we obtain the equality.

The uniqueness claim follows, since every element of $U$ can be written uniquely as product of root subgroup elements $x_{ij}(\alpha), (i,j)\in \pUP, \alpha\in \F_q$, where the product can be taken in an arbitrary fixed ordering of the position  $(i,j)\in \pUP$. From this the theorem follows by the discussion above.
\end{proof}

By a special choice of a basis of the solution space of $A_i \underline \alpha=0$ we can even do better. Indeed, the basis of the solution space given below is obtained by Gaussian elimination. 
We produce a $(\,\widetilde i-i)\times (\,\widetilde i-i)$-matrix $\tilde A_i$ as follows:

First we reorder the columns of $A_i$ such that we have columns $j_1, j_2, \ldots, j_r$ first (in that order) and then columns $s_1, s_2, \ldots, s_k\in S_i$, $(k=\widetilde i-i-r)$ where $S_i=\{l\,|\, i<l\leq \widetilde  i, l\notin \{j_1, \ldots, j_r\}\}=\{s_1<s_2<\cdots<s_k\}$. 
Note that in type $\fC_n$ we have automatically $s_k=\bar i$ by our assumption and column $s_k$ of $A_i$ is a zero column.
 Then we remove all zero rows, and take the remaining rows labelled by the row indices of main conditions in order $i_1, i_2, \ldots, i_r$. Thus since $j_1< j_2<\ldots< j_r$, the main conditions are on the diagonal of the upper left hand sided  $r\times r$-submatrix, which is lower triangular with main conditions on the diagonal. Then we use row operations to bring this $r\times r$-submatrix into diagonal form. More precisely, this is done by adding  multiples of rows to the lower ones from top down. Thus all the zero entries on the positions $(i_\nu, t)$ with $t>j_\nu$ ($\nu=1, \ldots, r$) are preserved as well as  the entries on the main conditions sitting on the diagonal of the upper left hand sided $r\times r$-submatrix, (keeping in mind  that the natural reordering of  $(j_1,j_2,\ldots,j_r,s_1,s_2,\ldots,s_k)$ intertwines $j_1< j_2<\ldots< j_r$ and  $s_1<s_2<\cdots<s_k$).

 Next we divide rows $i_1,\ldots, i_r$ by the diagonal values making this $r\times r$-submatrix the identity matrix $E_{r}$. We change the indexing of the rows of the resulting matrix to $j_1, j_2,\ldots, j_r$ and add $k$ many zero rows at the bottom with row indices $s_1, \ldots,s_k$ (in that order). Finally we replace the  south east $k\times k$ zero matrix by $-E_k$, where $E_k$ is the $k\times k$ identity matrix.  Thus we have:
\begin{equation}\label{tildeAi}
\includegraphics[width=0.4\textwidth]{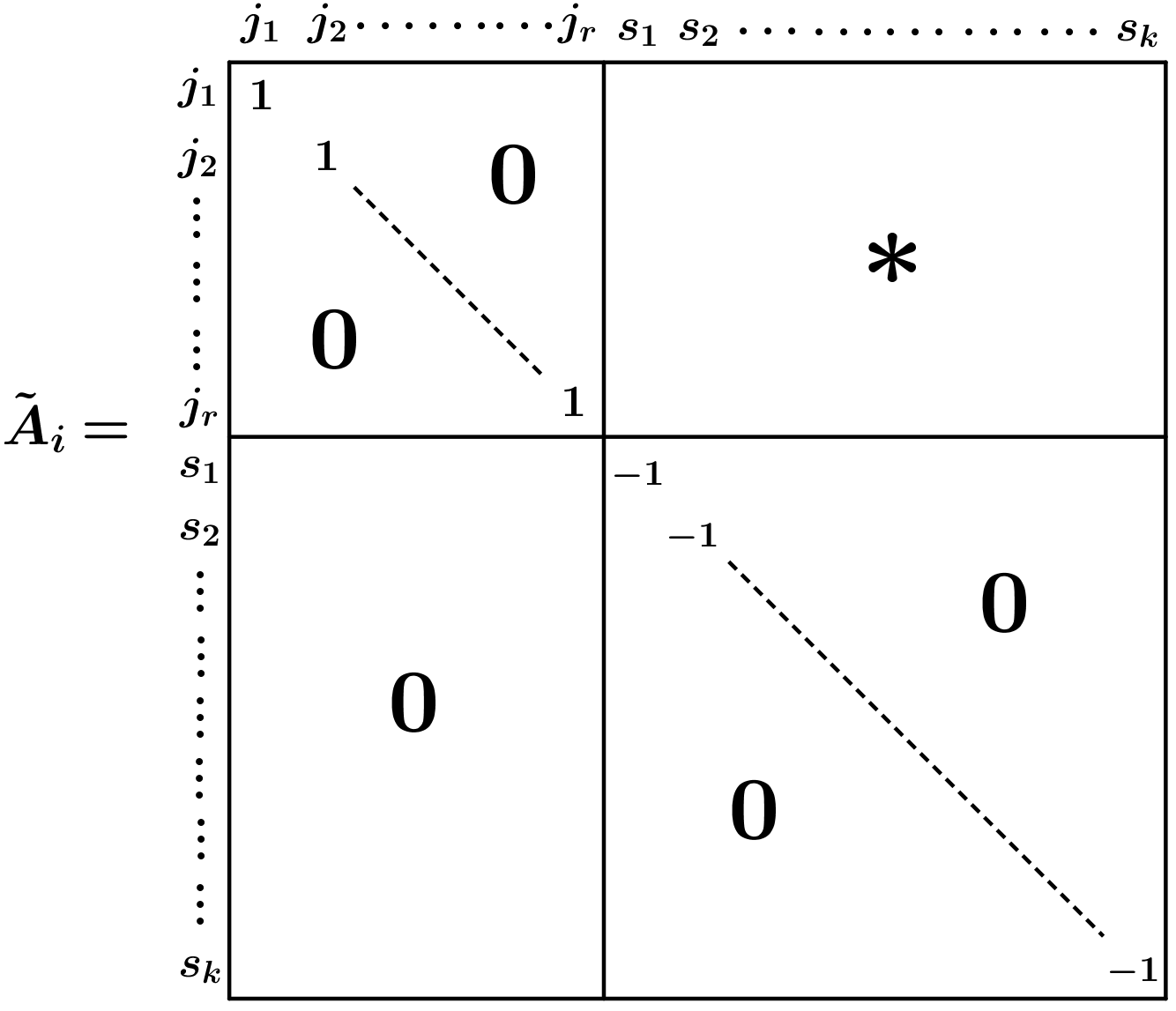}
\end{equation}
Let $\underline\alpha_{s_{_1}}, \underline\alpha_{s_{_2}}, \ldots, \underline\alpha_{s_{_k}}$ be the column vectors of $\tilde A_i$ of columns $s_{_1}, s_{_2},\ldots, s_{_k}=\{i+1, \ldots, \widetilde i\,\}\setminus\{j_1, \ldots, j_r\}$. We reorder the components of these column vectors back into the natural ordering of their labels (reintertwining $j_1< j_2<\ldots< j_r$ and  $s_1<s_2<\cdots<s_k$) and denote the resulting vectors again by $\underline\alpha_{s_\nu}$, ($\nu=1,\ldots, k$). Gaussian elimination tells us and it is easy to check, that $\{\underline\alpha_{s_{_1}}, \ldots, \underline\alpha_{s_{_k}}\}$ is a basis of the solution space of $A_i \underline \alpha=0$.

%f we reorder the row indices $\{j_1,\ldots, j_r, s_1,\ldots, s_r\}$ of the column vectors into its %natural order $i+1,i+2 \ldots, \bar i-1$ again denoted by $\underline\alpha_{s_\nu}$, %($\nu=1,\ldots, k$). 

Recall that to the right of main conditions in $A_i$ all entries of $A_i$ are zero. For any $1\leq \nu \leq r$, we find $1\leq \mu<k$ such that $s_\mu<j_\nu<s_{\mu+1}$ and then all entries of row $j_\nu$ of $\tilde A_i$ to the right of column $s_\mu$ are zero.

For $s\in S_i$, suppose
$\underline \alpha_s=(\alpha^s_{i+1}, \ldots, \alpha^s_{\,\widetilde i})^t\in \F_q^{\widetilde i-i}$. Then by our construction of $\underline\alpha_s$ we have $\alpha^s_s=-1$ and $\alpha^s_t=0$ for $s\neq t\in S_i$. In addition $\alpha^s_\mu=0$ if $\mu<s$ and $\mu\in \{j_1, \ldots, j_r\}$. Thus for $s\in S_i$, 
\begin{equation}\label{basisstab}
x(\underline\alpha_{s})=\prod_{\mu=i+1}^{\widetilde i} x_{i\mu} (\alpha^s_\mu)=x_{is}(-1)y\quad\text{ with }\quad
y:=\prod_{\rho=\nu}^{r}x_{ij_\rho}(\alpha^s_{j_\rho}) 
\end{equation}
where $\nu$ satisfies $j_{\nu-1}<s<j_{\nu}$.
Consequently, 
$[A].x(\underline\alpha_{s})=[A].x_{is}(-1)y$, where $[A].x_{is}(-1)$ adds column $s$ to column $i$ of $A$ and projects into $V_{\pUPs}$. Then $y$ readjusts the entries in column $i$ of $A$ back to the original column using the main conditions in $A_i$ further to the right of column $s$. To achieve this we have to add $-\alpha^s_{j_\rho}$ times column $j_\rho$ (containing main condition $(i_\rho, j_\rho)$) to column $i$ of $A$ for $\rho=\nu, \ldots, r$ to obtain on position $(i, i_\rho)$ of $A$ the original entry.

Let $[A]\in \hat V$ be a core and fix a row $i$, $1\leq i<\tilde n$, such that $\bar i$ is a zero column in $A$. We define
$U_{J(A)}^i=U_{J(A)}\cap \cR_i=U_{J(A)^i}$ with $J(A)^i=J(A)\cap\{(i,k)\,|\,i<k\leq \widetilde i\}$ and $\Limb^i(A)=\Limb(A)\cap\{(i,k)\,|\,i<k<\bar i\}$. Then obviously $\Limb^i(A)=\{(i, j_1), \ldots, (i, j_r)\}$ is the set of positions on row $i$ such that there is a main condition in $A_i$ above $(i, j_\nu)$, ($\nu=1, \ldots, r$). Likewise $J(A)^i=\{(i,k)\,|\,k\in S_i\}$ consists of all positions on row $i$ which do not have a main condition to the north of it.

Inspecting illustration \ref{tildeAi} we see that we can divide every linear combination of the column vectors $\underline \alpha_s, (s\in S_i)$ into two parts, namely those to indices $(j_1, \ldots, j_r)$ and $(s_1, \ldots, s_k)$. We obtain a row wise version of part of [\cite{GJD},7.20], stating that the permutation action by any element of $U_{J(A)^i}$ can be compensated by the action of some uniquely determined element of $U_{\Limb^i(A)} = \prod_{\mu=j_1, \ldots, j_r} X_{i\mu}$.

More precisely:
 For each $s \in S_i$ choose $\lambda_s\in \F_q$ and set  $\underline\alpha=\sum_{s\in S_i} \lambda_s \underline\alpha_s$, $x=x(\underline\alpha)$. 
Then 
$x=\left(\prod_{s\in S_i }x_{i s}(-\lambda_s)\right)y_{\underline\alpha}$ for a uniquely determined $y_{\underline \alpha}\in \prod_{(i,\mu)\in \Limb^i(A)}X_{i\mu}$. The coefficient $\lambda_\mu$ in $y_{\underline \alpha}=\prod_{(i,\mu)\in \Limb^i(A)}x_{i\mu}(\lambda_\mu)$ is given as coefficient
$\lambda_\mu=\sum_{s\in S_i}\lambda_s\alpha^s_{\mu}$
at position $\mu$ in $\underline \alpha_s=(\alpha^s_{i+1}, \alpha^s_{i+2}, \ldots, \alpha^s_{\,\widetilde i})^t$.

From this one obtains now the following description of stabilizer of cores:
\begin{Theorem}
Let $[A]\in \hat V$ be a core, $\Stab^i[A]=\Stab_U[A]\cap \cR_i$ for $1\leq i<\tilde n$. Then every $u\in \Stab_U[A]$ can be written uniquely as 
$u=u_1\cdots u_{\tilde n-1}$ with $u_i\in \Stab^i_U[A]$.
%,and $\Stab^i_U[A]=1, u_i=1,$ if $(k,\bar i)\in \rmc(A)$ for some $i<k<\bar i$.
 Moreover, if column $\bar i$ is a zero column in $A$ (or equivalently 
$\{\text{row } i \}\cap \Limb^\circ(A)=\emptyset$), then $\Stab^i_U[A]$ depends only on $A_i$, not on entries on or below row $i$ in $A$. Moreover for types $\fB_n, \fD_n$ the stabilizer $\Stab^i_U[A]$ in row $\cR_i$ is of the form
\[
\Stab^i_U[A]=\prod_{s\in S_i}X_{is}\times H_i, \quad\text{ (direct product)}
\]
where $H_i$ is a subgroup of $\prod_{\mu=j_1, \ldots, j_r} X_{i\mu}$
such that for every $u\in \prod_{s\in S_i}X_{is} $ we have precisely one element $y_u\in H_i$ with $uy_u\in \Stab^i_U[A]$.
If $U$ is of type $\fC_n$, we denote for any set $T$ of $\cR_i$ its image in the abelian group $\overline{\cR_i}=\cR_i/X_{i\,\bar i}$ by $\overline{T}$. Then 
\[
\overline{\Stab^i_U[A]}=\prod_{\bar i\not=s\in S_i}\overline{X_{is}}\times \overline{H_i}, \quad\text{ (direct product)}
\]
where $\overline{H_i}$ is a subgroup of $\prod_{\mu=j_1, \ldots, j_r} \overline{X_{i\mu}}$ (direct product),
such that for every $u\in \prod_{s\in S_i}X_{is} $ we have modulo $X_{i\,\bar i}$ exactly one element $\overline{y_u}\in \overline{H_i}$ with $\overline{u}\,\overline{y_u}\in \overline{\Stab^i_U[A]}$.
\end{Theorem}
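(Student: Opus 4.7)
The uniqueness of the factorisation $u=u_1\cdots u_{\tilde n-1}$ with $u_i\in\Stab^i_U[A]$ is essentially already in Theorem~\ref{describestabRi}: the cardinality computation there forces $\Stab_U[A]$ to coincide with the product $\prod_{i=1}^{\tilde n-1}\Stab^i_U[A]$, and uniqueness of the decomposition descends from the global unique factorisation of elements of $U$ as products of root-subgroup elements along any fixed ordering of $\pUP$ (Theorem~\ref{anyorder}), restricted to the ordering that takes rows $1,2,\ldots,\tilde n-1$ in succession. The claim that $\Stab^i_U[A]$ depends only on $A_i$ when column $\bar i$ of $A$ is zero (equivalently $\{\text{row }i\}\cap\Limb^\circ(A)=\emptyset$) is read off directly from Corollary~\ref{stabRi}, since only $A_i$ appears in the defining system $A_i\underline\alpha=0$.

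For the structural description in types $\fB_n,\fD_n$, the plan is to promote the Gaussian-elimination basis $\{\underline\alpha_s\mid s\in S_i\}$ of $\fS_i$ built in \ref{tildeAi} into a group-theoretic splitting. Since $\cR_i$ is abelian in these types, it decomposes as the internal direct product $\cR_i=\prod_{s\in S_i}X_{is}\times\prod_{\rho=1}^r X_{i,j_\rho}$. For $\underline\alpha=\sum_{s\in S_i}\lambda_s\underline\alpha_s$, formula \ref{basisstab} rewrites
\[
x(\underline\alpha)=\Bigl(\prod_{s\in S_i}x_{is}(-\lambda_s)\Bigr)\cdot\prod_{\rho=1}^r x_{i,j_\rho}\Bigl(\sum_{s\in S_i}\lambda_s\alpha^s_{j_\rho}\Bigr).
\]
As $\lambda_{j_\rho}=\sum_s\lambda_s\alpha^s_{j_\rho}$ is $\F_q$-linear in $(\lambda_s)_{s\in S_i}$, the assignment $u=\prod_s x_{is}(-\lambda_s)\mapsto y_u:=\prod_\rho x_{i,j_\rho}(\lambda_{j_\rho})$ is a group homomorphism $\phi\colon\prod_{s\in S_i}X_{is}\to\prod_\rho X_{i,j_\rho}$. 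Taking $H_i:=\im(\phi)$ identifies $\Stab^i_U[A]$ with the graph $\{u\cdot\phi(u)\mid u\in\prod_{s\in S_i}X_{is}\}$ of $\phi$, which yields existence and uniqueness of the correction $y_u=\phi(u)$ attached to each $u$; this is precisely the asserted direct-product description.

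For type $\fC_n$ the obstacle is the non-abelianness of $\cR_i$ coming from $[X_{ij},X_{i\bar j}]=X_{i\bar i}$. However, $X_{i\bar i}$ is central in $\cR_i$ and lies inside $\Stab^i_U[A]$ (Lemma~\ref{typeCstab} and the remark after Corollary~\ref{stabRi}), so the quotient $\overline{\cR_i}:=\cR_i/X_{i\bar i}$ is abelian and $\overline{\Stab^i_U[A]}$ is well defined. Moreover, column $\bar i$ of $\tilde A_i$ is a zero column under our hypothesis, so the last basis vector $\underline\alpha_{s_k}=\underline\alpha_{\bar i}$ has $-1$ in position $\bar i$ and $0$ elsewhere, and hence contributes exactly the factor $X_{i\bar i}$ that is now quotiented out. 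Running the preceding argument in $\overline{\cR_i}$ after excluding $s=\bar i$ from the index set then yields the analogous direct-product decomposition of $\overline{\Stab^i_U[A]}$.

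The main technical obstacle is purely organisational: one must carefully interleave the natural order $i+1<i+2<\cdots<\widetilde i$ on the columns of $A_i$ with the Gaussian-elimination reorder $(j_1,\ldots,j_r,s_1,\ldots,s_k)$ used to form $\tilde A_i$ in \ref{tildeAi}, so that the scalars $\alpha^s_{j_\rho}$ in the basis vectors $\underline\alpha_s$ are unambiguously defined and the map $\phi$ lands in $\prod_\rho X_{i,j_\rho}$ rather than being mixed back into $\prod_s X_{is}$. Once this indexing is fixed, every remaining assertion reduces to the standard fact that the graph of a homomorphism between abelian groups is itself a subgroup projecting bijectively onto the domain.
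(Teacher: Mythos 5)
Your proposal is correct and follows essentially the same route as the paper: the uniqueness of the row-by-row factorisation is a restatement of Theorem~\ref{describestabRi}, the dependence only on $A_i$ comes directly from Corollary~\ref{stabRi}, and the structural description for types $\fB_n,\fD_n$ is obtained, exactly as in the paper's pre-theorem discussion, by decomposing the Gaussian basis vectors $\underline\alpha_s$ into the $\{j_1,\ldots,j_r\}$- and $\{s_1,\ldots,s_k\}$-parts so that $u\mapsto y_u$ is an $\F_q$-linear (hence homomorphic) map whose graph is $\Stab^i_U[A]$, with $H_i$ its image in the abelian group $\prod_\rho X_{ij_\rho}$; the paper compresses this into the one-line observation that $H_i$ is a subgroup since $\cR_i$ is abelian. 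Your treatment of type $\fC_n$ by quotienting out the central subgroup $X_{i\bar i}$, which stabilises $[A]$, also matches the paper's remark following Corollary~\ref{stabRi}.
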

\begin{proof}
Assume column $\bar i$ is a zero column in $A$. Then the only claim not immediately obvious by the discussion above is that $H_i$ is a group if $U$ is of types $\fB_n, \fD_n$. This however follows, since  it is contained in the abelian row group $\cR_i$. For type $\fC_n$ we observe that $X_{i\,\bar i}$ acts on $[A]$ by the trivial character.
\end{proof}

There are several nice consequences of this theorem, one being for example that if a core $[A]\in \hat V$ and a staircase $[B]\in \hat V$ coincide at all positions to the north of row $i$, their stabilizers in rows $j\leq i$ coincide as well.

For $[A]\in \hat V$ the stabilizer $\Stab_U[A]=\{u\in U\,|\, [A].u=[A]\}$
acts on $[A]$ by a linear character which we will henceforth denote by $\Psi_A$. Thus  
$[A]u=\Psi_A(u)[A].u=\Psi_A(u)[A]$. Note that $\Psi_A$ is a group homomorphism from $\Stab_U[A]$ into $\C^*$. On the other hand, by \ref{monomial} we have $[A]u=\theta\kappa(-A, \pi_{\pUPt}(u^{-1}))[A].u=\prod_{(a,b)\in \pUPt}\theta(-A_{ab}g_{ab})[A]$ where $g=u^{-1}$, and hence
$
\Psi_A(u)=\prod_{(a,b)\in \pUPt}\theta(-A_{ab}g_{ab})\in \C^*.
$
So let $1\leq i<\tilde n$ and again assume, that column $\bar i$ in $A$ is a zero column. Let $y_i=\prod\limits_{i<k\leq\,\widetilde i}x_{ik}(\alpha_k)\in \Stab_U[A]$, $\alpha_k\in \F_q$, then
\begin{equation}\label{stabcharRi}
\Psi_A(y_i)=\prod_{i<k\leq\,\widetilde i}\theta(A_{ik}\alpha_{k}).
\end{equation}
%Setting $y_i=1$ if column $\bar i$ is not a zero column, we can extend this notion to products of elements $y_i$, $i$ running through $1, \tilde n-1$.
\begin{Lemma}\label{stabcharbasis}
With the notation of the discussion above for a core $[A]\in \hat V$ with $\bar i$ being a zero column. Let $s\in S_i$. Then there exists $2\leq \nu\leq r$ such that $j_{\nu-1}<s<j_\nu$. Define $x(\underline\alpha_s)$ as in \ref{basisstab}. Then $x(\underline\alpha_s)\in \Stab_U[A]^i\subseteq \Stab_U[A]$ and we have:
\begin{equation*}
\Psi_A(x(\underline\alpha_s))=\theta(-A_{is})\prod_{\rho=\nu}^r \theta(A_{ij_\rho}\alpha^s_{j_\rho}).
\end{equation*}
\end{Lemma}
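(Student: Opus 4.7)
The plan is to reduce the claim to a direct computation by combining equation \ref{basisstab}, which gives the explicit product decomposition of $x(\underline\alpha_s)$, with the character formula \ref{stabcharRi}, which evaluates $\Psi_A$ on any element of $\Stab^i_U[A]$ in terms of the matrix entries $A_{ik}$ along row $i$.

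First I would record that $x(\underline\alpha_s)\in\Stab^i_U[A]$. Since by hypothesis column $\bar i$ of $A$ is zero, Corollary \ref{stabRi} applies and identifies $\Stab^i_U[A]$ with $\{x(\underline\alpha)\mid\underline\alpha\in\fS_i\}$. By the Gaussian-elimination construction producing $\tilde A_i$ in \ref{tildeAi}, the vectors $\underline\alpha_{s_1},\ldots,\underline\alpha_{s_k}$ form a basis of $\fS_i$, so in particular $\underline\alpha_s\in\fS_i$ and hence $x(\underline\alpha_s)\in\Stab^i_U[A]\subseteq\Stab_U[A]$.

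Next I would apply \ref{stabcharRi} to $y_i=x(\underline\alpha_s)=\prod_{i<k\leq\widetilde i}x_{ik}(\alpha^s_k)$, obtaining
\[
\Psi_A(x(\underline\alpha_s))\;=\;\prod_{i<k\leq\widetilde i}\theta(A_{ik}\alpha^s_k).
\]
The index set $\{i+1,\ldots,\widetilde i\}$ splits as $S_i\,\dot\cup\,\{j_1,\ldots,j_r\}$. The key step is then to read off from the construction of $\underline\alpha_s$ which components vanish: by definition $\alpha^s_t=0$ for every $t\in S_i\setminus\{s\}$, $\alpha^s_s=-1$, and $\alpha^s_{j_\mu}=0$ for all $\mu<\nu$ (since the Gaussian-reduced matrix $\tilde A_i$ has zeros strictly to the left of each diagonal entry after the index $s$ is placed between $j_{\nu-1}$ and $j_\nu$). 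All factors $\theta(A_{ik}\alpha^s_k)$ with $\alpha^s_k=0$ equal $\theta(0)=1$ and drop out.

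What remains on the right-hand side is the contribution of $k=s$, namely $\theta(-A_{is})$, together with the contributions from $k=j_\rho$ for $\rho=\nu,\nu+1,\ldots,r$, giving
\[
\Psi_A(x(\underline\alpha_s))\;=\;\theta(-A_{is})\prod_{\rho=\nu}^{r}\theta(A_{ij_\rho}\alpha^s_{j_\rho}),
\]
which is precisely the claimed formula. There is no genuine obstacle here: the lemma is a direct bookkeeping consequence of the explicit shape of $\underline\alpha_s$, and the only thing to watch is that the characters are evaluated on the correct row-group product decomposition, which is guaranteed by \ref{stabcharRi} and the abelian (resp.\ abelian modulo $X_{i\bar i}$ in type $\fC_n$) structure of $\cR_i$.
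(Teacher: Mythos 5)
Your proof is correct and follows exactly the route the paper indicates in its one-line proof: apply the character formula \ref{stabcharRi} to the explicit product decomposition of $x(\underline\alpha_s)$ from \ref{basisstab}, then use the vanishing pattern of the components $\alpha^s_k$ (namely $\alpha^s_s=-1$, $\alpha^s_t=0$ for $t\in S_i\setminus\{s\}$, and $\alpha^s_{j_\mu}=0$ for $j_\mu<s$) to reduce the full product to the claimed form. You have simply fleshed out what the paper leaves implicit, and your closing remark about the abelian (resp.\ abelian modulo $X_{i\bar i}$ in type $\fC_n$) structure of $\cR_i$ correctly flags the only point where one must be careful that \ref{stabcharRi} applies to the product decomposition as written.
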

\begin{proof}It follows directly from the definition of $x(\underline\alpha_s)$ and \ref{stabcharRi}.
\end{proof}

\begin{Remark}
\begin{enumerate}
\item [1)] The stabilizer $\Stab_{\widetilde U}[A]$ of $[A]\in \hat V$ in $\tilde U$ can be similarly treated as  $\Stab_U[A]$. The main difference is that now elements in row groups $\cR_i$ where column $i$ contains a main condition may belong to the stabilizer even if $[A]$ is a core.
\item [2)]  
%For type $\fC_n$ we ignored for the most part antidiagonal subgroups in the stabilizer of $[A]$ and just added these in at the end of the discussion. We could instead enlarge the submatrix $A_i$ by column $\bar i$, which would automatically added the solutions $X_{i\bar i}$, since by assumption the $\bar i$th column of $A$ is zero. We then still had to add the stabilizing root subgroups $X_{i\bar i}$ as factor in, for which $A_{i\bar i}\neq 0$, and for which therefore $\cR_i$ is the arm belonging to the main condition $(i,\bar i)$ of $A$.
%\item [3)] 
If $[A]\in \hat V$ is not a core it may happen, that the rank of the coefficient matrix $A_i$ ($i$ a row index with column $\bar i$ a zero column)
 is bigger than the number of main conditions in the set of positions in $A_i$, see illustration:
 \begin{equation}\label{upnotcore}
\includegraphics[width=0.8\textwidth]{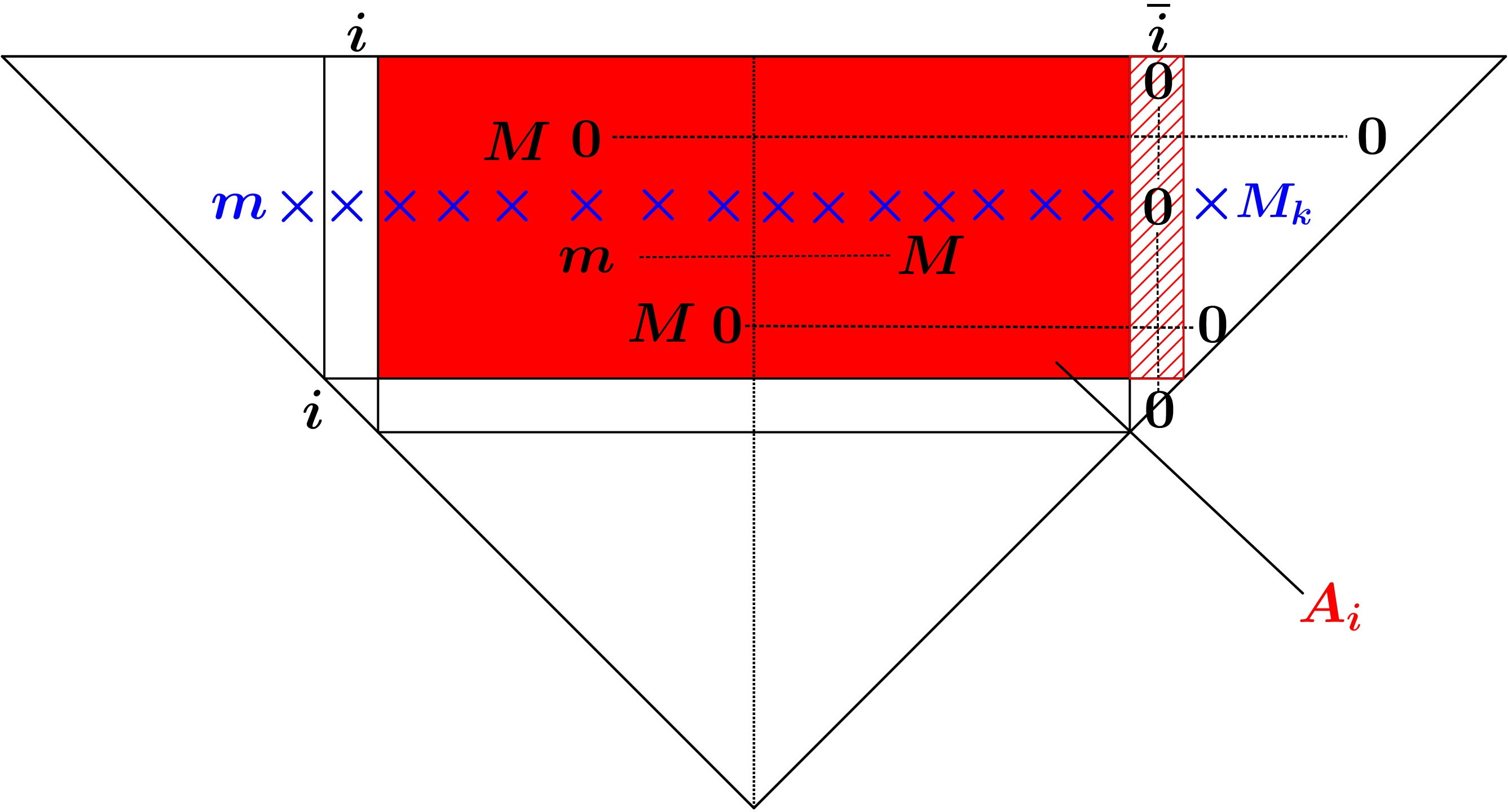}
\end{equation}
Thus the stabilizer elements in rows $\cR_i$, $i$-th column a zero column, will not produce the full stabilizer group. Indeed in the illustration above we may need the main conditions $M_k$ to the right of column $\bar i$ to adjust elements in positions $(k,i)$, where $k$ is the row index of $M_k$. But this is done using the root subgroup $X_{\bar k i}$ on the arm $\cA(i,k).$  \hfill $\square$
\end{enumerate}
\end{Remark}

\section{Verge modules}\label{secverge}
Recall that $\widetilde U=U_N(q)$. Let $\widetilde V=\{u-1\,|\, u\in \widetilde U\}=V_{\URs}=\Lie(\widetilde U)$. Then
$\widetilde U$ acts on $\widetilde V$ by left and right multiplication and hence on the set of linear complex characters $\hat { \widetilde V}$ of the additive group $(\widetilde V, +)$. Then $f: \widetilde U\rightarrow \widetilde V: u\mapsto u-1$ is a left and right 1-cocycle, providing both a left and right monomial action of $\widetilde U$ on $\C \hat {\widetilde V}$ with monomial basis $\hat {\widetilde V}=\{\widetilde {[A]}\,|\, A\in \widetilde V\}$. With this action from both sides $\C \hat {\widetilde V}$ becomes a $\C\widetilde U$-$\C\widetilde U$-bimodule which is isomorphic to the regular bi-representation $_{\C \widetilde U}\C \widetilde U_{\C \widetilde U}.$    It is known that $\hat {\widetilde V}$ decomposes into  monomial biorbits, (see [\cite{andre}, \cite{yan}]).  In [\cite{mindim}] each  biorbit contains precisely one {\bf verge}. Moreover each biorbit is a union of right orbits, all right orbits in a biorbit induce isomorphic right modules, and any two right orbits, being contained in different biorbits, afford orthogonal characters. The different (and hence orthogonal) characters afforded by  the right orbits are the Andr\'e-Yan supercharacters of $\widetilde U$, (cf. [\cite{andre}, \cite{yan}]).
Note that  the main condition defined in [\cite{mindim}], on which the verges  depend, is slightly different from what we are using in this paper. If one defines main conditions for $A\in\widetilde V$ analogous to the definition for $U$-orbits in this paper to consist of the positions of non-zero entries farest to the right in each row in $A$, the main conditions remain invariant for characters on $\widetilde \cO_A$. However right orbits may then have different main conditions, even if they belong to the same $\widetilde U$-biorbit.  Of course, staircase characters $\widetilde{[A]} \in\hat {\widetilde V}$ are again defined by requiring, that all main conditions of  $\widetilde{[A]}$ lie in pairwise different columns of $A$. For  staircase characters both notions of main conditions coincide. Hence, if we restrict ourselves to the staircase characters (and we do this in this paper), then both definitions of main conditions  give the same sets of positions, and hence the same verges.

\begin{Defn}\label{vergemodules}
Let  $[A]\in \hat V$ be a  staircase verge character. We define 
\[
\cV(A)=\{[B]\in \hat V\,|\, \verge(B)=\verge(A)\}.
\]
\end{Defn}

\begin{Remark}
Let  $[A]\in \hat V$ be a  staircase verge character. Since $\supp(A)\subseteq \pUP\subseteq \UR$, the notion of $\widetilde{[A]}\in \hat {\widetilde V}$  is well defined. Moreover the right monomial $\widetilde U$-orbit $\tilde \cO_A$  consists of all $\widetilde{[B]}$ where $B$ is different from $A$ only on the positions in $\UR$ and to the left of main conditions, hence $\verge(B)=\verge(A)$. Since $\main(A)\subseteq \supp(A)\subseteq \pUP$ and $\minc(A)\cup \suppl(A)$ are all to the left of $\main(A)$, we have indeed shown: 
\[
\C \cV(A)=\C \tilde \cO_{A}=\bigoplus_{\stackrel{[B]\in\hat V\text{ is  core} }{\verge(B)=\verge(A)}}\C \cO_{B}.
\]
In particular we call $\C \cV(A)$ {\bf verge module} if $[A]\in \hat V$ is a  staircase verge character.   \hfill $\square$
\end{Remark}

\begin{Defn}\label{defofI}
Given a staircase $\widetilde{[A]}\in \hat {\widetilde V}$ we define
\begin{enumerate} 
\item [i)] $I^\circ_d(A):=\{(i,j)\in \UR\,|\,\exists\, (i,a), (j,b)\in \main(A), i<a< j<b\}$.
\item [ii)] $I_d(A):=\{(i,j)\in \UR\,|\,\exists\, (i,a), (j,b)\in \main(A), i<a\leq j<b\}$.
\item [iii)]$I(A):=\{(i,j)\in \UR\,|\,\exists\, (i,a), (j,b)\in \main(A), a<b\}$.
\end{enumerate}
Note that $I^\circ_d(A)\subseteq I_d(A) \subseteq I(A)$ and that these subsets depend only on $\main(A)$.  \hfill $\square$
\end{Defn}

\begin{Defn}\label{hookintersection}
Let $(i,j)\in \UR$. Define the {\bf $(i,j)$-hook $h_{ij}$} to be
\[
h_{ij}=\{(i,a)\,|\,i<a<j\}\cup\{(a,j)\,|\,i<a<j\}\cup\{(i,j)\}\subseteq \tilde \Phi^+
\]
Let $A\in \widetilde V$. If $(i,a), (j,b)\in \main(A)$, then we call $(j,a)$ {\bf main hook intersection}. \hfill $\square$
\end{Defn}

%\bigskip

We illustrate these definitions as follows: 
\begin{equation}\label{lA}
\includegraphics[width=0.85\textwidth]{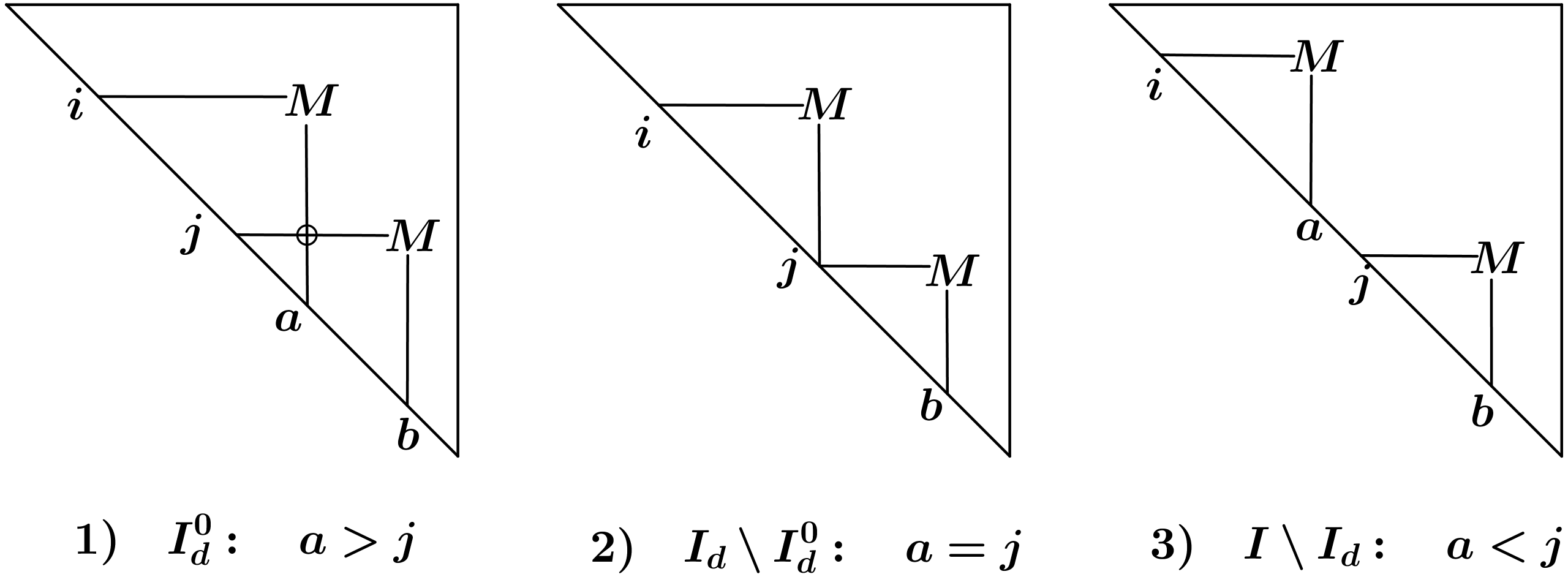}
\end{equation}
%\vspace{2.5cm}

For $\widetilde{[A]}\in  \hat {\widetilde V}$ being a verge, the intersection $\cO_A^r\cap \cO_A^l$ of the right and left orbit of $\widetilde{[A]}$, is exactly the set of all  characters $\widetilde{[B]}\in  \hat {\widetilde V}$ such that $B$ coincides with $A$ at all positions except  the main hook intersections  in $\tilde \Phi^+$. For example, in the illustration  \ref{lA} 3), the entries at the main hook intersection $(j,a)$ can be changed either by the left action of the root subgroup $\widetilde X_{ij}$ or the right action of the root subgroup $\widetilde X_{ab}$.

\begin{Lemma}
Let $\widetilde{[A]}\in \hat {\widetilde V}$ be staircase.  Then 
$I=I(A), I_d=I_d(A), I_d^\circ=I_d^\circ(A)$ are closed subsets of $\tilde \Phi^+$. If $\supp(A)\subseteq \pUP$ and hence $A\in V_{\pUPs}=V$, then 
$I, I_d, I_d^\circ$ are closed subsets of $\pUP$ as defined in \ref{Upattern}. 
Thus $\widetilde U_I, \widetilde U_{I_d}, \widetilde U_{I_d^\circ}$ are pattern subgroups of $\widetilde U=U_N(q)$ and $ U_I, U_{I_d}, U_{I_d^\circ}$ are pattern subgroups of $ U=U(\fX_n)$, $\fX_n=\fB_n, \fC_n$ or $\fD_n$. Moreover, $\widetilde U_{I_d}$ (resp. $U_{I_d}$), $\widetilde U_{I_d^\circ}$ (resp. $U_{I_d\circ}$) are normal subgroups of $\widetilde U_I$ (resp, $U_I$).
\end{Lemma}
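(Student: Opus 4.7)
The plan is to treat closure first and normality second; the driving feature throughout is that $[A]$ being staircase makes each row of $A$ carry at most one main condition, so that the data defining $I,I_d,I_d^\circ$ can be chained through shared row indices. For condition (i) of the closedness definition \ref{Upattern}, given $J\in\{I,I_d,I_d^\circ\}$ and $(i,j),(j,k)\in J$, the definition supplies main conditions $(i,a),(j,b_1),(j,b_2),(k,c)\in\main(A)$; the staircase hypothesis forces $b_1=b_2$, and substitution into the defining inequalities immediately yields $(i,k)\in J$ --- for instance for $I_d$ the chain $i<a\leq j<b_1=b_2\leq k<c$ collapses to $i<a\leq k<c$, while the $I$ and $I_d^\circ$ cases are entirely analogous.

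Assume now $\supp(A)\subseteq\pUP$. First, $J\subseteq\pUP$: given witnessing main conditions $(i,a),(j,b)\in\main(A)\subseteq\pUP$, the chain $i<a<b\leq\bar j$ gives $i<\bar j$, hence $j<\bar i$, so $(i,j)\in\pUP$. Condition (ii) of \ref{Upattern} is vacuous: every row $r$ that carries a main condition satisfies $r<\bar r$, hence $r\leq n$; but then $j$ and $\bar j$ cannot both carry main conditions, since $j+\bar j=N+1>2n$. So the hypothesis $(i,j),(\bar k,\bar j)\in J$ of (ii) is never satisfiable, and the pattern-subgroup assertions for both $\widetilde U$ and $U$ follow from theorem \ref{anyorder}.

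For $\widetilde U_{I_d}\trianglelefteq\widetilde U_I$ it suffices by the Chevalley commutator formula in $U_N(q)$ to verify that for $(i,j)\in I,\,(k,l)\in I_d$ (and symmetrically) each nonzero $[\tilde x_{ij}(\alpha),\tilde x_{kl}(\beta)]$ lies in some $\tilde X_{pq}$ with $(p,q)\in I_d$. The only nonzero cases are $j=k$, giving position $(i,l)$, and $l=i$, giving position $(k,j)$; in both the new position lies in $I_d$ by exactly the chaining argument used for closure, and strict inequalities handle $I_d^\circ$ identically. For $U$, the root subgroup $X_{ij}$ in types $\fB,\fC,\fD$ is a product of $\tilde x_{ij}$ and $\tilde x_{\bar j\bar i}$ (with the extra factor $\tilde x_{i\bar i}(-\frac{1}{2}\alpha^2)$ in $x_{i,n+1}(\alpha)$ for $\fB$ and with the anti-diagonal subgroups $X_{i\bar i}$ for $\fC$), so $[X_{ij},X_{kl}]$ could a priori also pick up contributions when $l=\bar j$ or $k=\bar i$; but either such mirror matching would force main conditions on both $j$ and $\bar j$ (respectively on both $i$ and $\bar i$), which has just been excluded. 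Only the direct matchings survive, and $U$-normality reduces to the same calculation as in $\widetilde U$. I foresee the bookkeeping for the $U$-commutators (the $\frac{1}{2}\alpha^2$ term in type $\fB$ and the $X_{i\bar i}$ subgroups in type $\fC$) to be the main nuisance, but the vacuousness of the mirror matchings collapses the analysis to the type-$\fA$ case.
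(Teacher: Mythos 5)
Your proof is correct and follows essentially the same route as the paper's: closure by chaining through the shared row index $j$, vacuousness of condition (ii) of \ref{Upattern} because $j$ and $\bar j$ cannot simultaneously carry main conditions of $A$ when $\supp(A)\subseteq\pUP$ (the paper states this without the $j+\bar j=N+1>2n$ computation, which you supply), and normality via the ideal relation between $I_d$ (resp.\ $I_d^\circ$) and $I$ at the level of positions, which you spell out as Chevalley commutator relations and the paper leaves implicit after verifying that $(i,j)\in I_d$, $(j,k),(s,i)\in I$ force $(i,k),(s,j)\in I_d^\circ\subseteq I_d$. One small misattribution: the uniqueness of the main condition in row $j$ (your $b_1=b_2$) is built into the very definition of a main condition as the rightmost nonzero entry of a row and holds for every $A$, while the staircase hypothesis constrains \emph{columns}, not rows, so that step should appeal to the definition rather than to the staircase assumption.
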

\begin{proof}
To check if $I, I_d, I_d^\circ$ are closed in $\pUP$, we only need to prove $I, I_d, I_d^\circ$ are closed in $\tilde \Phi^+$ since  the situation ii) in  \ref{Upattern} never occurs. In fact, if
$(i,j), (\bar k,  \bar j)\in I(A)$, then by \ref{defofI} there exist $(i,a), (j,b),(\bar k,c),(\bar j,d)\in \main(A)$ such that $a<b$ and $c<d$. But $ (j,b), (\bar j,d)\in \main(A)$ can not happen since $\main(A)\subseteq \supp(A)\subseteq \pUP$.

Now assume $(i,j), (j,k)\in I$. Then there exists $(i,a), (j,b), (k,c)\in \main(A)$ with $a<b<c$, hence $(i,k)\in I$. Thus $I$ is closed in $\tilde \Phi^+$ and hence also closed in $\pUP$.
%Let $(i,j), (j,k)\in I_d$. Then there exists $(i,a), (j,b), (k,c)\in \main(A)$ with $i<a\leq j<b\leq k<c$, hence $i<a<k<c$  and $(i,k)\in I_d^\circ\subseteq I_d$. Thus $I_d$ is closed in $\tilde \Phi^+$.
%Let $(i,j), (j,k)\in I_d$. Then there exists $(i,a), (j,b), (k,c)\in \main(A)$ with $i<a< j<b< k<c$, hence $i<a<k<c$  and $(i,k)\in I_d^\circ$. Thus $I_d^\circ$ is closed in $\tilde \Phi^+$.
By similar argument, one can check easily that this holds also for $I_d$ and $I_d^\circ$. Then by definition $\widetilde U_I, \widetilde U_{I_d}, \widetilde U_{I_d^\circ}$ are pattern subgroups of $\widetilde U=U_N(q)$ and $ U_I, U_{I_d}, U_{I_d^\circ}$ are pattern subgroups of $ U=U(\fX_n)$, $\fX_n=\fB_n, \fC_n$ or $\fD_n$.

 Let $(i,j)\in I_d$. Then there exists $(i,a), (j,b)\in \main(A)$ such that $i<a\leq j<b$. Suppose $(j,k), (s,i)\in I$. Then there exists $(k,c),(s,r)\in \main(A)$ such that $b<c$ and $r<a$. Since $i<a(\leq j)<k<c$ and $s<r<(a\leq) j<b$, we have  $(i,k), (s,j)\in I_d^\circ \subseteq I_d$ and hence $\widetilde U_{I_d}$ (resp. $U_{I_d}$) are normal subgroups of $\widetilde U_I$ (resp, $U_I$). Replacing $a\leq j$ by $a<j$, we have indeed $(i,j)\in I_d^\circ$ and by similar argument  one can prove that $\widetilde U_{I_d^\circ}$ (resp. $U_{I_d\circ}$) are also normal subgroups of $\widetilde U_I$ (resp, $U_I$).
\end{proof}

In [\cite{mindim}], it was shown that for a staircase verge $\widetilde{[A]}\in \hat {\widetilde V}$ the left action of $\widetilde U_I, I=I(A)$, on $\widetilde{[A]}$ induces $\widetilde U$-endomorphisms on $\C \tilde \cO_A$, $\widetilde U_{I_d^\circ}$  acts trivially and $\widetilde U_{I_d}$ acts as linear character $\tilde \Psi_A$ on $\C \widetilde{[A]}$. If $\tilde E_A\in \C \widetilde U_{I_d}$ denotes the (central) idempotent such that $\C \tilde E_A$ affords $\tilde \Psi_A$, then $\tilde E_A$ is central in $\tilde U_I$ and $\C \widetilde U_I/\C \widetilde U_{I}\tilde E_A $ is the endomorphism ring of $\C \tilde \cO_A$.

Now suppose $A\in V=V_{\pUPs}\subseteq V_{\URs}$ is a staircase verge, and consider $\hat V$ as subset of $\hat {\widetilde V}$ by restriction of maps. Then $I(A)$ is contained in $\tril\subseteq \pUP$. As pointed out in \ref{LeftTildeUandUAction}, $x_{ij}(\alpha).[A]=\tilde x_{ij}(\alpha).[A]=[\tilde x_{ji}(-\alpha).A]$ for $(i,j)\in \pUP$ and $\alpha\in \F_q$. This applies in particular to $(i,j)\in I(A)$. Moreover one checks using \ref{isotostaircase} that:
\begin{Lemma}
Let $[A]\in \hat V$ be a staircase verge, $I=I(A)$, and let $u\in U_I\leq U$. Define $\tilde u\in \widetilde U$ as in \ref{LeftTildeUandUAction}. Then
 $\supp(u^{-t}A)\subseteq \pKL$ and hence 
\[
\lambda_{\tilde u}([B])=\tilde u [B]=u[B]=\lambda_u[B]
\]
for all $[B]\in \tilde \cO_A\subseteq \hat V\subseteq \hat {\widetilde V}$.
\end{Lemma}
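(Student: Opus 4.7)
The plan is to first establish the support condition $\supp(u^{-t}A)\subseteq\pKL$, and then to deduce the four-fold equality from Corollary~\ref{leftactionSupp} combined with the identifications in Remark~\ref{LeftTildeUandUAction}.

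For the support condition, I would use Theorem~\ref{anyorder} to factor $u=\prod_{(i,j)\in I}x_{ij}(\alpha_{ij})$ in some fixed order, so that $u^{-t}$ is the product of the $(x_{ij}(\alpha_{ij}))^{-t}$ in the reverse order. The key structural input is that for $(i,j)\in I(A)$ the defining main condition $(j,b)\in\main(A)\subseteq\pUP$ forces $j<b\leq\bar j$, and hence $j\leq n$ in every Lie type. Consequently the root subgroup element has the uniform shape $x_{ij}(\alpha)=1+\alpha e_{ij}-\alpha e_{\bar j\bar i}$ and $(x_{ij}(\alpha))^{-t}=1-\alpha e_{ji}+\alpha e_{\bar i\bar j}$. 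Because $\bar j\geq n+1>j$, row $\bar j$ of $A$ contains no positions of $\pUP$ and is therefore a zero row, so the $e_{\bar i\bar j}$-summand has no effect. The $e_{ji}$-summand only subtracts $\alpha$ times row $i$ of $A$ from row $j$, and the main condition $(i,a)$ together with $a<b\leq\bar j$ confines this change to columns strictly less than $\bar j$, all of which lie in $\KL\subseteq\pKL$.

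I would then iterate this analysis through the factorisation. At each stage the main-condition structure is preserved: row $i$ of the running matrix retains its rightmost nonzero entry at column $a$, because previous operations touch only rows $j_s$ and $\bar i_s$ and the contributions added to those rows always sit strictly to the left of the existing rightmost nonzero entry. Hence every successive generator is handled by the single-generator analysis above, and an induction on the number of factors yields $\supp(u^{-t}A)\subseteq\pKL$. Exactly the same argument, with $A$ replaced by any $[B]\in\tilde\cO_A$ (which satisfies $\main(B)=\main(A)$ and therefore $I(B)=I(A)$), gives $\supp(u^{-t}B)\subseteq\pKL$ for every $B$ in the orbit.

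Once the support condition is in hand, Corollary~\ref{leftactionSupp} supplies $\lambda_u[B]=\chi_{_{-B}}(u^{-1})\,[u^{-t}.B]$, which by the definition in Remark~\ref{LeftTildeUandUAction} equals $u[B]$. On the other side, $\tilde u\in\widetilde U_{\pUPs}$ acts on $[B]$ via the left monomial action of equation~\ref{rowformula}, so $\lambda_{\tilde u}[B]=\tilde u[B]$ by definition; the scalar and permutation factors coincide with those of $u[B]$, since $u.[B]=\tilde u.[B]$ by Remark~\ref{LeftTildeUandUAction} and the value of the $1$-cocycle $f=\pi_{\pUPs}$ on $u^{-1}$ agrees with its value on $\tilde u^{-1}$. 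Assembling these identifications yields $\lambda_{\tilde u}([B])=\tilde u[B]=u[B]=\lambda_u[B]$. The main technical obstacle I anticipate is the inductive bookkeeping that tracks how each row of the running matrix evolves, ensuring that the rightmost-nonzero constraint on every relevant row is preserved under composition of generators; once this is verified, the remaining steps are routine applications of the machinery already developed in the paper.
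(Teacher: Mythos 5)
Your proposal is essentially correct and uses the same core idea as the paper: exploit the fact that $(i,j)\in I(A)$ forces $j\leq n$ (so all relevant root subgroup elements have the uniform $\fD$-type shape, with the $e_{\bar j\bar i}$-summand landing in the lower-right $n\times n$ block), and that the main condition $(i,a)$ kills all entries of row $i$ past column $a$, so that adding a multiple of row $i$ to row $j$ only touches columns $c\leq a<b\leq\bar j$, which lie in $\KL$. The difference is methodological: you prove $\supp(u^{-t}A)\subseteq\pKL$ for arbitrary $u\in U_I$ directly, by induction on a factorisation of $u^{-t}$, whereas the paper appeals to Remark~\ref{isotostaircase} to reduce the entire statement to a single generator $u=x_{ij}(\alpha)$ and then verifies the support condition only in that case. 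Your route is somewhat more explicit (and more faithful to the literal statement of the lemma, which asserts the support condition for all $u\in U_I$), at the cost of the bookkeeping you flag at the end; the paper's route is shorter but leaves the multiplicativity reduction implicit.

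Two points in your iteration could be tightened. First, the vanishing of the $e_{\bar i_s\bar j_s}$-summand at a general stage is not automatic: you need that row $\bar j_s$ of the \emph{running} matrix, not just of $A$, is zero. This holds because the operations $e_{j_si_s}$ with $i_s<j_s\leq n$ modify only rows $\leq n$, and the operations $e_{\bar i_s\bar j_s}$ modify only rows $>n$ starting from a zero block; so rows $>n$ stay zero by induction, but this should be stated. Second, when you assert that ``the value of the $1$-cocycle $f=\pi_{\pUPs}$ on $u^{-1}$ agrees with its value on $\tilde u^{-1}$'', this is true but not entirely trivial: it rests on the observation that since every $(i,j)\in I$ has $i<j\leq n$, an element $u\in U_I$ has block-diagonal form (upper-left $n\times n$ block and lower-right $n\times n$ block, zero off-diagonal blocks), its upper-left block coincides with that of $\tilde u$, and all positions of $\pUP$ with nonzero entries of either matrix lie in that upper-left block. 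With those two points filled in, the argument goes through.
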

\begin{proof}
By \ref{isotostaircase} it suffices to prove   $\supp(u^{-t}A)\subseteq \pKL$ for $u=x_{ij}(\alpha)$ with $(i,j)\in I, \alpha\in \F_q$. Suppose $(i,a), (j,b)\in \main(A)$ with $a<b$. Then $1\leq i, j\leq n$ and hence 
\[
u^{-t}A=x_{ij}(\alpha)^{-t}A=\tilde x_{ji}(-\alpha)A
\]
Thus $u^{-t}A$ is different from $A$ only on possible positions $(j,c)$ with $c\leq a$ since $(i,a)\in \main(A)$. But  $(j,c)\in \KL$ since $c\leq a<b$ and $(j,b)\in \pUP.$ So we have shown: $\supp(u^{-t}A)\subseteq \pKL$ as desired.
\end{proof}
\begin{Cor}
Let $[A]\in \hat V$ be a staircase verge character. Then $\C \cV(A)$ is a $\C U_I$-$\C U$- and a $\C \widetilde U_I$-$\C U$-bimodule. \hfill $\square$
\end{Cor}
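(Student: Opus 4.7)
The plan is to assemble three ingredients: the right $\C U$-module structure of $\C\cV(A)$ as a direct sum of right $U$-orbit modules; the left $\widetilde U_I$-action on $\C\tilde\cO_A$ established in [\cite{mindim}]; and the preceding lemma, which shows that the left $U_I$-action agrees with a restricted left $\widetilde U_I$-action on the relevant characters. Once these are lined up, the two bimodule structures fall out almost immediately.

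By the remark following Definition \ref{vergemodules}, $\C\cV(A) = \C\tilde\cO_A = \bigoplus_{B}\C\cO_B$, where $B$ ranges over core characters in $\hat V$ with $\verge(B) = \verge(A)$. As a direct sum of right $\C U$-orbit modules, $\C\cV(A)$ carries a right $\C U$-action; viewed as the right $\widetilde U$-orbit module inside $\C\hat{\widetilde V}$, it is in fact a right $\C\widetilde U$-module. By the result from [\cite{mindim}] recalled just before Definition \ref{vergemodules}, the left action of $\widetilde U_I$ on $\widetilde{[A]}$ produces $\widetilde U$-endomorphisms of $\C\tilde\cO_A$. Thus left $\widetilde U_I$-multiplication preserves $\C\cV(A)$ and commutes with the right $\widetilde U$-action; restricting the right side to $U\leqslant\widetilde U$ yields the $\C\widetilde U_I$-$\C U$-bimodule structure.

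For the left $U_I$-action, the preceding lemma gives $\lambda_u[B] = \lambda_{\tilde u}[B]$ for every $u\in U_I$ and every $[B]\in\tilde\cO_A$, where $\tilde u\in\widetilde U$ is as in Remark \ref{LeftTildeUandUAction}. Hence the left action of $u$ sends each monomial basis element of $\C\tilde\cO_A=\C\cV(A)$ back into $\C\cV(A)$, so $\C\cV(A)$ is stable under left $U_I$. The right $\C U$-linearity of $\lambda_u$ is automatic, since under the identification $f\colon\C U\to\C\hat V$ the operator $\lambda_u$ is just left multiplication by $u$ on $\C U$, which commutes with right multiplication. The only delicate point is the identification $\lambda_u=\lambda_{\tilde u}$ on $\tilde\cO_A$; but this is exactly what the preceding lemma supplies, via the hypothesis $\supp(u^{-t}A)\subseteq\pKL$ which holds for all $u\in U_I$ by the inclusion $I(A)\subseteq\tril$ and the support analysis carried out there.
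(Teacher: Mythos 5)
Your proof is correct and assembles exactly the ingredients the paper intends: the identification $\C\cV(A)=\C\tilde\cO_A$ from the remark after Definition \ref{vergemodules}, the fact from [\cite{mindim}] that left $\widetilde U_I$-multiplication gives right $\widetilde U$-endomorphisms of $\C\tilde\cO_A$, and the preceding lemma's identity $\lambda_u[B]=\lambda_{\tilde u}[B]$ (via $\supp(u^{-t}A)\subseteq\pKL$) to transport this to the left $U_I$-action. The paper states the corollary without proof precisely because it is this immediate combination, and your argument supplies it correctly.
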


\section{The Main Theorem}

Having shown in theorem \ref{Main1} that every irreducible $\C U$-module appears as constituent in an orbit module $\C \cO_A$ for some main separated character $[A]\in \hat V$, we now want to show our second main result stating, that the characters of $U$ afforded by main separated verge modules  are pairwise orthogonal and that any two main separated orbit modules afford either identical or orthogonal $U$-characters. However, it will become apparent from our proof of the main theorem, that in type $\fC$ anti-diagonal main conditions cause additional problems. Indeed we have not been able to overcome these so far. Thus we have to restrict ourselves to linear characters $[A]\in\hat V$ satisfying $\main(A)\subseteq \UP$.

% {\color{red}Check type $\fC_n$}

\begin{Defn}
For $A\in V$ we define $\Psi_{A}:\Stab_{U}[A]\rightarrow \C^*$ by:
$
[A]u=\Psi_{A}(u)[A] $, for all $ u\in \Stab_{U}[A].
$  \hfill $\square$
\end{Defn}
Thus $\Psi_{A}$ is a linear character of $\Stab_{U}[A]$ and by [\cite{GJD}, 4.9] we have
\[
\Psi_{A}(u)=\theta\kappa(-A,\pi(u^{-1}))=\theta\kappa(-A,u^{-1}).
\]
Note that $\C [A]$ then is the 1-dimensional $\C \Stab_{U}[A]$-module affording $\Psi_A$.

Let $A\in V$. By general theory $\C \cO_A\cong \Ind^U_{S} \C [A]$ where $S=\Stab_{U}[A]$. By Mackey  decomposition we obtain immediately:
\begin{Lemma}\label{Mackey}
Let $A, B\in V, S=\Stab_{U}[A], T=\Stab_{U}[B]$.
\[
\Hom_{\C U}(\C \cO_A, \C \cO_B)\cong \bigoplus_{d\in \cD_{B, A}} \Hom_{\C(S\cap T^d)} (\C [A], \C [B]^d)
\]
where $ \cD_{B, A}$ is a system of $T$-$S$ double coset representatives in $U$ and $T^d=d^{-1}Td$ and $\C [B]^d$ denotes the conjugate $T^d$-module.\hfill$\square$
\end{Lemma}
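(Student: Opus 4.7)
The plan is to apply the classical Mackey decomposition for $\Hom$ between induced modules. First, I would recall from general theory that $\C \cO_A \cong \Ind^U_S \C[A]$ and $\C \cO_B \cong \Ind^U_T \C[B]$, where the one-dimensional $\C S$-module $\C[A]$ affords the linear character $\Psi_A$ of $S=\Stab_U[A]$ from the preceding definition (and similarly for $B$). This identification is just the standard fact that a transitive monomial permutation module with stabilizer $S$ and stabilizer character $\Psi_A$ is induced from $\C[A]$; it follows at once from $[A]u=\Psi_A(u)[A]$ for $u\in S$ combined with \ref{monomial}.

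Next, I would apply Frobenius reciprocity to obtain
\[
\Hom_{\C U}(\Ind^U_S \C[A],\,\Ind^U_T \C[B]) \;\cong\; \Hom_{\C S}\bigl(\C[A],\,\Res^U_S \Ind^U_T \C[B]\bigr),
\]
and then the classical Mackey decomposition of induced-restricted modules,
\[
\Res^U_S \Ind^U_T \C[B] \;\cong\; \bigoplus_{d\in \cD_{B,A}} \Ind^S_{S\cap T^d} \Res^{T^d}_{S\cap T^d} \C[B]^d,
\]
where $\cD_{B,A}$ runs through a chosen system of $T$-$S$ double coset representatives in $U$ and $\C[B]^d$ is the conjugate $\C T^d$-module defined via the isomorphism $T^d \to T$, $x\mapsto dxd^{-1}$.

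Finally, a second application of Frobenius reciprocity to each summand yields
\[
\Hom_{\C S}\bigl(\C[A],\,\Ind^S_{S\cap T^d} \C[B]^d\bigr) \;\cong\; \Hom_{\C(S\cap T^d)}\bigl(\C[A],\,\C[B]^d\bigr),
\]
and assembling the direct sum over $d\in \cD_{B,A}$ produces the claimed formula. The proof is purely a bookkeeping exercise with these three standard ingredients, and no substantive obstacle is expected; the only point requiring a brief verification is the first step, namely that the right $\C U$-module $\C\cO_A$ really is induced from the one-dimensional $\C S$-module $\C[A]$ with character $\Psi_A$, which is immediate from the monomial description of the $U$-action on $\hat V$ in \ref{monomial}.
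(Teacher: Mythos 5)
Your proof is correct and coincides with the paper's own (very terse) approach: the paper simply records that $\C\cO_A\cong\Ind^U_S\C[A]$ and invokes Mackey decomposition, and the three-step derivation you spell out (Frobenius reciprocity, Mackey's formula for $\Res\circ\Ind$, then Frobenius reciprocity again, using that $\Ind$ is biadjoint to $\Res$ for finite groups over $\C$) is the standard way to unwind that citation.
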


Note that $\C [B]^d\cong \C [C]$ as $T^d$-module with $[C]=[B].d=[Bd^{-t}]\in \cO_B=\{[B].u\,|\,u\in U\} = \{[B].dS\,|\,d\in \cD_{B,A}\}$. For $s\in S$ we have
\begin{eqnarray*}
\Hom_{\C (S\cap T^{ds})}(\C [A], \C ([B].d.s))&\cong&
\Hom_{\C (S^{s^{-1}}\cap T^d)}(\C [A].s^{-1}, \C ([B].d))\\&\cong&
\Hom_{\C (S\cap T^d)}(\C [A], \C ([B].d)).
\end{eqnarray*}

Let $A, B\in V.$ For convenience we use henceforth the following notation:
\begin{equation}\label{stabintersection}
\Stab_U(A, B)=\Stab_U(A)\cap \Stab_U(B).
\end{equation}

From \ref{Mackey} this we conclude immediately:

\begin{Cor}\label{HomCond}
Let $A,B\in V$.
Then the following statements are equivalent
 \begin{enumerate}
 \item [i)] $ \Hom_{\C U}(\C \cO_A, \C \cO_B)\neq (0)$;
 \item [ii)] There exist $[C]\in \cO_A$, $[D]\in \cO_B$ such that $\Hom_{\Stab_U(C, D)}(\C [C], \C [D])\neq (0);$
 \item [iii)] There exist $[C]\in \cO_A$ and $[D]\in \cO_B$ such that $\Psi_C$ and $\Psi_D$ coincide on $\Stab_{U}(C, D);$
\item [iv)] There exists  $[D]\in \cO_B$ such that $\Psi_A$ and $\Psi_D$ coincide on $\Stab_{U}(A, D)$.\hfill $\square$
 \end{enumerate}
\end{Cor}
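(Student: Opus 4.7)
The plan is to derive (i)--(iv) directly from the Mackey decomposition \ref{Mackey}, exploiting the fact that all modules in play are one-dimensional. First I would dispatch the easy equivalence (ii)$\Leftrightarrow$(iii): since $\C [C]$ and $\C [D]$ are one-dimensional $\C(\Stab_U(C,D))$-modules on which the group acts via the linear characters $\Psi_C$, respectively $\Psi_D$, a nonzero $\C(\Stab_U(C,D))$-homomorphism between them exists if and only if $\Psi_C$ and $\Psi_D$ agree on $\Stab_U(C,D)$. The implication (iv)$\Rightarrow$(iii) is then immediate by taking $[C]=[A]$.

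Next I would establish (i)$\Leftrightarrow$(iv). Applying \ref{Mackey} yields
\[
\Hom_{\C U}(\C \cO_A,\C \cO_B)\cong\bigoplus_{d\in \cD_{B,A}}\Hom_{\C(S\cap T^d)}(\C [A],\C [B]^d),
\]
where $S=\Stab_U[A]$ and $T=\Stab_U[B]$. The remark preceding the corollary identifies $\C [B]^d$ with $\C [D]$ for $[D]=[B].d\in \cO_B$, and $T^d=\Stab_U[D]$, so $S\cap T^d=\Stab_U(A,D)$. By the one-dimensionality argument above, the $d$-th summand is nonzero iff $\Psi_A$ and $\Psi_D$ coincide on $\Stab_U(A,D)$. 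Moreover the displayed chain of isomorphisms right after \ref{Mackey} shows that replacing $d$ by $ds$ with $s\in S$ does not alter the summand, so that the family $\{[D]=[B].d\}_{d\in \cD_{B,A}}$ exhausts $\cO_B$ up to exactly the redundancy that is already quotiented out. Hence the direct sum is nonzero iff (iv) holds.

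The only step requiring a small calculation is (iii)$\Rightarrow$(iv). Assume $[C]\in\cO_A$, $[D]\in\cO_B$ and $\Psi_C=\Psi_D$ on $\Stab_U(C,D)$. Write $[C]=[A].u$ and set $[D']=[D].u^{-1}\in\cO_B$. Unwinding the defining identity $[A].v=[A]\Leftrightarrow u^{-1}vu\in\Stab_U[C]$ gives $\Stab_U[A]=u\,\Stab_U[C]\,u^{-1}$, and similarly $\Stab_U[D']=u\,\Stab_U[D]\,u^{-1}$, hence
\[
\Stab_U(A,D')=u\,\Stab_U(C,D)\,u^{-1}.
\]
For $v=uwu^{-1}\in\Stab_U(A,D')$ one computes $[A].v=[C].wu^{-1}=\Psi_C(w)[A]$, so $\Psi_A(v)=\Psi_C(w)$; the analogous computation with $[D']$ in place of $[A]$ gives $\Psi_{D'}(v)=\Psi_D(w)$. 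Since $\Psi_C(w)=\Psi_D(w)$ by hypothesis, $\Psi_A$ and $\Psi_{D'}$ coincide on $\Stab_U(A,D')$, which is (iv).

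The main (very mild) obstacle is the conjugation bookkeeping in the last step, where one must verify that the agreement of characters is preserved when transporting the representative $[C]$ back to $[A]$ via a suitable right translation of $[D]$; once this is done, all four statements collapse to the single assertion produced by Mackey's formula.
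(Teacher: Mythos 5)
Your proof is correct and follows essentially the same route as the paper: the paper asserts the corollary ``immediately'' from Lemma \ref{Mackey} together with the displayed chain of isomorphisms preceding it, and you have simply made the one-dimensionality argument and the conjugation bookkeeping explicit. In particular your step (iii)$\Rightarrow$(iv) --- translating $[C]=[A].u$ back to $[A]$ by replacing $[D]$ with $[D].u^{-1}$ and conjugating the stabilizers --- is exactly the content of the isomorphism $\Hom_{\C(S\cap T^{ds})}(\C[A],\C[B].d.s)\cong\Hom_{\C(S\cap T^d)}(\C[A],\C[B].d)$ that the paper records after \ref{Mackey}, so no genuinely new idea is involved.
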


The basic idea for the proof of our main result consists of using item iv) (where $[A]$ and $[B]$ are main separated) in the corollary above and the discussion in section \ref{sectionstab} on stabilizers to work inductively our way down the rows of the matrices $A$ and $B$. For this we need some preparation.

\smallskip
Let $[A]\in \hat V$ be a main separated core and let $1\leq i<\tilde n$ be such that column $\bar i$ of $A$ is the zero column (or equivalently does not contain a main condition). Thus row $i$ is not an arm of $A$ and is not contained in $\Limb(A)$. Let $[B]\in \hat V$ be main separated (and hence staircase as well) and suppose, that matrix $A$ and $B$ coincide to the north of row $i$. Thus 
\begin{equation}\label{ABnorth}
B_{kj}=A_{kj}\quad \text{ for } 1\leq k<i, k<j<\bar k
\end{equation}
In addition we assume that $ \Hom_{\C U}(\C \cO_A, \C \cO_B)\neq (0)$.
Recall, that by \ref{describestabRi} the part of $\Stab_U[A]$ lying in the row subgroup $\cR_i=\langle X_{ij} \,|\, i<j\leq \widetilde i\rangle$,  can be described by the solution space of the homogeneous system of linear equations whose coefficient matrix is $$A_i=\sum_{\stackrel{1\leq a<i}{i<b\leq\,  \widetilde i}} A_{ab}e_{ab}.$$
Note that $B_i=A_i$ by the assumption \ref{ABnorth}. Hence observing that by lemma \ref{stabRi} the solution set of $B_i\underline{\alpha}=0$ gives $\Stab_U[B]\cap \cR_i$ we see that 
\begin{equation}
\Stab^i_U[B] = \Stab_U[B]\cap \cR_i=\Stab_U[A]\cap \cR_i = \Stab^i_U[A] \subseteq \Stab_{U}(A,B).
\end{equation}

We again suppose that the main conditions of $A$ in $A_i$ are given as $\{(i_1, j_1), \ldots, (i_r, j_r)\}$ with $1\leq i_\nu<i$ ($\nu=1, \ldots, r$) and $i<j_1<\cdots<j_r<\bar i$. Recall from section \ref{sectionstab} that the set $S_i$ is defined as $S_i =\{k\,|\, i<k\leq \widetilde i, k\notin \{j_1, \ldots, j_r\}\}$. Then $S_i$ is the set of positions on row $i$ such that there is no main condition in its column to the north of it.

\begin{Lemma}\label{rowivalues}
Let $[A], [B]\in \hat V$ and $1\leq i<\tilde n$ as above. Assume that $\Psi_B$ and $\Psi_A$ coincide on
$$
\Stab^i_U(A,B) := \Stab^i_U[A]\cap\Stab^i_U[B].
$$
 Then for $s\in S_i$:
\begin{equation}\label{barelation}
B_{is}=A_{is}-\sum_{\stackrel{j_\nu>s}{1\leq \nu\leq r}} (A_{ij_\nu}-B_{ij_\nu})\alpha^s_{j_\nu}
\end{equation}
where $\underline \alpha_s=(\alpha^s_{i+1}, \alpha^s_{i+2}, \ldots, \alpha^s_{\widetilde i})^t\in \F_q^{\widetilde i-i}$ is again the $s$-th column vector of $\tilde A_i$ in \ref{tildeAi} reordered as in \ref{basisstab}.
\end{Lemma}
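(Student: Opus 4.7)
The plan is to exploit the explicit basis $\{\underline{\alpha}_s : s\in S_i\}$ of the kernel of $A_i$ constructed in \ref{tildeAi}--\ref{basisstab}, and to evaluate the linear characters $\Psi_A$ and $\Psi_B$ on the stabilizer elements $x(c\underline{\alpha}_s)$ for every scalar $c\in\F_q$. Since $B_i=A_i$ by \ref{ABnorth}, each $c\underline{\alpha}_s$ lies in the common kernel of $A_i$ and $B_i$, and hence $x(c\underline{\alpha}_s)\in\cR_i$ lies in $\Stab^i_U(A,B)$, on which $\Psi_A$ and $\Psi_B$ agree by hypothesis.

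I will then compute both sides of $\Psi_A(x(c\underline{\alpha}_s))=\Psi_B(x(c\underline{\alpha}_s))$ using formula \ref{stabcharRi}. From the Gaussian-elimination construction of $\tilde A_i$ in \ref{tildeAi}, the components of $\underline{\alpha}_s=(\alpha^s_{i+1},\ldots,\alpha^s_{\widetilde i})^t$ satisfy $\alpha^s_s=-1$, $\alpha^s_t=0$ for $t\in S_i\setminus\{s\}$, and $\alpha^s_{j_\rho}=0$ whenever $j_\rho<s$. Letting $\nu$ be the smallest index with $j_\nu>s$ (with the empty-sum convention when no such $\nu$ exists), \ref{stabcharRi} yields
\[
\Psi_A(x(c\underline{\alpha}_s))=\theta\Bigl(c\bigl(-A_{is}+\sum_{\rho=\nu}^r A_{ij_\rho}\alpha^s_{j_\rho}\bigr)\Bigr),
\]
and the same formula with $A$ replaced by $B$ holds, using that $A_i=B_i$ forces the scalars $\alpha^s_{j_\rho}$ to be identical in both cases.

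Equating these two expressions for every $c\in\F_q$ and invoking nontriviality of $\theta$ as a character of $(\F_q,+)$---so that the map $c\mapsto\theta(cx)$ is trivial precisely when $x=0$---produces the scalar identity
\[
-A_{is}+\sum_{\rho=\nu}^r A_{ij_\rho}\alpha^s_{j_\rho}=-B_{is}+\sum_{\rho=\nu}^r B_{ij_\rho}\alpha^s_{j_\rho}
\]
in $\F_q$, which rearranges to \ref{barelation}. The one delicate step is tracking the vanishing pattern of the $\alpha^s_{j_\rho}$, but this is precisely what the construction of $\tilde A_i$ records. Beyond this bookkeeping the argument reduces to orthogonality of characters of $(\F_q,+)$, and I do not anticipate a serious obstacle.
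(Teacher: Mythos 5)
Your argument is essentially identical to the paper's: both exploit the basis $\{\underline\alpha_s\}$ of the common kernel of $A_i=B_i$, evaluate $\Psi_A$ and $\Psi_B$ on $x(\lambda\underline\alpha_s)$ via \ref{stabcharRi}/\ref{stabcharbasis}, and use that $\lambda\mapsto\theta(\lambda x)$ is trivial only for $x=0$ to extract the linear relation. The paper phrases it as first equating $\Psi_A(x(\underline\alpha_s))=\Psi_B(x(\underline\alpha_s))$ and then passing to scalar multiples $\lambda\underline\alpha_s$, whereas you carry the scalar $c$ from the start, but this is only a cosmetic reorganization of the same proof.
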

\begin{proof}
In \ref{basisstab} it was shown how the solutions space of $A_i\underline\alpha=0$ ($\underline\alpha\in \F_q^{\widetilde i-i}$) determines $\Stab_U^i[A]$, and the basis $\{\underline\alpha_s\in \F_q^{\widetilde i-i}\,|\, s\in S_i\}$
was determined. Moreover it was shown in \ref{stabcharbasis} that the linear character $\Psi_A, \Psi_B$ of $\Stab_U[A]$, and $\Stab_U[B]$  satisfy:
\begin{eqnarray*}
\Psi_A(x(\underline\alpha_s))=\theta(-A_{is}+\sum_{\stackrel{j_\nu>s}{1\leq \nu\leq r}} A_{ij_\nu}\alpha^s_{j_\nu})=\Psi_B(x(\underline\alpha_s))=\theta(-B_{is}+\sum_{\stackrel{j_\nu>s}{1\leq \nu\leq r}} B_{ij_\nu}\alpha^s_{j_\nu}),
\end{eqnarray*}
since $x(\underline\alpha_s)\in \Stab_U(A, B)\cap \cR_i$. Dividing by the right hand side of this equation  yields
\[
1=\theta(B_{is}-A_{is}+\sum_{\stackrel{j_\nu>s}{1\leq \nu\leq r}}(A_{ij_\nu}-B_{ij_\nu})\alpha^s_{j_\nu})=\theta(\sum_{i<l\leq\, \widetilde i} (A_{il}-B_{il})\alpha^s_{l}).
\]
Since the solution set of $A_i\underline\alpha=0$ is a linear space, $\lambda \underline\alpha_s$ is a solution as well for all $\lambda\in \F_q$, hence $x(\lambda \underline\alpha_s)\in \Stab_U(A, B)\cap \cR_i$. Taking characters we obtain
\[
1=\Psi_A(x(\lambda \underline\alpha_s)^{-1})\Psi_B(x(\lambda \underline\alpha_s))=\theta\big(\lambda(\sum_{i<l \leq\, \widetilde i} (A_{il}-B_{il})\alpha^s_{l})\big),\quad \forall \, \lambda\in \F_q
\]
forcing $\sum_{i<l\leq\, \widetilde i} (A_{il}-B_{il})\alpha^s_{l}=0$  since $\theta$ is a non trivial character of $(\F_q, +)$, and the lemma follows.
\end{proof}

\begin{Remark}
Suppose $U$ is of type $\fC_n$. Let $i<j<\bar i$ and $\alpha\in \F_q$, then $x_{ij}(\alpha)$ acts on any $[C]\in \hat V$ by the combination of two restricted column operations as pictured in \ref{illTruncatedColumnOperationG}.
If  $C_{k\bar i}=0$ for all $k<i$, then $[C]x_{ij}(\alpha)=\theta(\alpha C_{ij})[D]$, where $[D]=[C].x_{ij}(\alpha)$ may differ from $[C]$ only on column $i$ and on position $(i, \bar j)$. Indeed $D_{ij}=C_{ij}+\alpha  C_{i\, \bar i}$
  for $j\leq n$ and $D_{ij}=C_{ij}-\alpha  C_{i\, \bar i}$ for $j>\tilde n$.

Suppose in \ref{rowivalues} in addition, that $A_{i\, \bar i}=z\neq 0$. Thus in particular column $\bar i$ is not a zero column. Then $A_{i\, \bar i}=z$ is the only non zero entry of column $\bar i$ in $A$ and $(i, \bar i)$ is a main condition. Since $A$ and $B$ coincide to the north of row $i$ by assumption, $B_{k\bar i}=0$ for $1\leq k<\bar i$.

As an easy consequence, $[X_{ij}, X_{i \bar j}]=X_{i \bar i}$, and hence the  row group $ \cR_i$ is not abelian any more. Moreover $\Stab_U[A]\cap  \cR_i=\Stab_U[B]\cap  \cR_i=X_{i \bar i}$. We shall use equation \ref{barelation} later on to show that acting by suitable row operations will change $B$ into a matrix $C$ with $\Stab_U[C]=\Stab_U[B]$ such that $A$ and $C$ coincide to the north of row $i+1$. This will be our induction step to prove our main result. If $A_{i \bar i}=z\neq 0$, we can only conclude $B_{i\bar i}=A_{i \bar i}$. Note that  $A$ being a core forces then $A_{ik}=0$ for $i<k<\bar i$, whereas the entries in row $i$ of $B$ besides the anti-diagonal one can be arbitrary. 

The vectors $\underline{\alpha}_s$ defined as solutions of $A_i \underline{\alpha}=B_i \underline{\alpha}=0$ are still defined but we do not know, if the character values calculated according to \ref{barelation} of $A$ and $B$ coincide, since neither $x(\underline{\alpha}_s)$ is in $\Stab_U[A]$ nor in $\Stab_U[B]$. This is the problem with type $\fC_n$ as indicated in the introduction to this section. \hfill $\square$
\end{Remark}

In view of the previous remark we shall from now on assume that also for type $\fC$ the matrices $A$ and $B$ in $V$ have support entirely contained in $\UP$.

\begin{Cor}\label{coincide1}
In the situation of \ref{rowivalues}  row $i$ in $A$ and $B$ coincide, if they coincide at positions  $(i, j_\nu)$ for all  $\nu=1,\ldots,r$, (that is at all positions of row $i$ below main conditions).\hfill $\square$
\end{Cor}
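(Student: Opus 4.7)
The corollary is a direct consequence of Lemma \ref{rowivalues}, so the plan is essentially to substitute and observe cancellation. My approach would be the following.

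First, I would note that the set of positions on row $i$ contained in $\pUP$ (aside from the anti-diagonal position $(i,\bar i)$, which by our standing assumption $\supp(A),\supp(B)\subseteq\UP$ carries a zero entry in both $A$ and $B$) is exactly the disjoint union $\{(i,j_\nu)\mid 1\leq\nu\leq r\}\,\dot\cup\,\{(i,s)\mid s\in S_i\}$, by the definition of $S_i$ in section \ref{sectionstab}. Hence to show that rows $i$ of $A$ and $B$ coincide, it suffices to show $A_{is}=B_{is}$ for every $s\in S_i$, given the hypothesis $A_{ij_\nu}=B_{ij_\nu}$ for all $\nu=1,\ldots,r$.

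Next, I would invoke formula \ref{barelation} from Lemma \ref{rowivalues}: for each $s\in S_i$,
\[
B_{is}=A_{is}-\sum_{\stackrel{j_\nu>s}{1\leq \nu\leq r}} (A_{ij_\nu}-B_{ij_\nu})\alpha^s_{j_\nu}.
\]
By hypothesis each difference $A_{ij_\nu}-B_{ij_\nu}$ vanishes, so every summand on the right-hand side is zero. This forces $B_{is}=A_{is}$ for all $s\in S_i$, regardless of the particular values of the coefficients $\alpha^s_{j_\nu}$ coming from the basis of the solution space of $A_i\underline\alpha=0$.

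Combining the two observations gives agreement of $A$ and $B$ on every position of row $i$ in $\pUP$. There is no real obstacle to this argument; it is purely a bookkeeping consequence of \ref{rowivalues} together with the partition of row $i$ into main-condition columns and non-main-condition columns. The corollary is thus immediate and will serve later as the inductive step allowing one to propagate row-by-row agreement of $A$ and $B$ from the top downwards.
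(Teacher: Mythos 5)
Your proof is correct and is precisely the immediate substitution the paper has in mind (which is why the corollary is stated with no proof): formula \ref{barelation} collapses once the differences $A_{ij_\nu}-B_{ij_\nu}$ vanish, and row $i$ in $\pUP$ is exhausted by $\{j_1,\ldots,j_r\}\,\dot\cup\,S_i$ plus possibly the anti-diagonal position, which is zero in both $A$ and $B$ under the standing assumptions. Nothing further is needed.
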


\begin{Cor}\label{samemainRi}
Let $[A], [B]\in \hat V$ as above. Then either the $i$-th row of $A$ and $B$ are zero rows or there exists $(i,s)$ with $(i,s)\in \mc(A)\cap \mc(B)$ and $A_{is}=B_{is}$.
\end{Cor}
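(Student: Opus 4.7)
The plan is to reduce the statement to a short case analysis on the rightmost non-zero entries of row $i$ in $A$ and in $B$, extracting everything from Lemma \ref{rowivalues} (equation \ref{barelation}). First I would record two structural observations. Since $[A]$ is a core with $\main(A)\subseteq\UP$, we have $\rmc(A)=\emptyset$, hence $\minc(A)=\emptyset$ and $\suppl(A)=\emptyset$; so $\supp(A)=\main(A)$ and row $i$ of $A$ contains at most one non-zero entry $A_{is}$, attained at the main condition $(i,s)\in\mc(A)$ if such exists. Secondly, any main condition $(i,t)$ of $B$ lying in row $i$ must satisfy $t\in S_i$: the positions $(i_\nu,j_\nu)$ remain main conditions of $B$ since $A$ and $B$ agree north of row $i$, and a second main of $B$ in a column $j_\nu$ would violate the staircase property of $B$. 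The standing hypothesis $\supp(B)\subseteq\UP$ moreover forces $t\leq\widetilde i$.

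The key mechanism is the following collapse of \ref{barelation}. If $s\in S_i$ is at least as large as the rightmost non-zero column of row $i$ in \emph{both} $A$ and $B$, then every $j_\nu>s$ lies strictly to the right of both rightmost non-zeros, so $A_{ij_\nu}=B_{ij_\nu}=0$, the sum in \ref{barelation} vanishes, and the identity reduces to $B_{is}=A_{is}$.

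With this collapse I would exhaust the four cases. If row $i$ of both $A$ and $B$ is zero, we are in the first alternative of the corollary. If exactly one of them (say $A$) carries a main $(i,s)$, plugging $s$ into the collapse yields $A_{is}=B_{is}=0$, contradicting $A_{is}\neq 0$; the mirror sub-case is symmetric. If both carry mains $(i,s)\in\mc(A)$ and $(i,t)\in\mc(B)$ with $s\neq t$, applying the collapse at $u:=\max(s,t)\in S_i$ gives $A_{iu}=B_{iu}$, yet exactly one of these two values vanishes (being strictly right of its respective main), a contradiction. Hence $s=t$, and a final application of the collapse at $s$ delivers $A_{is}=B_{is}$, the second alternative. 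I do not anticipate a genuine obstacle here — the substantive work has been isolated in Lemma \ref{rowivalues} — but the analysis makes transparent why the hypothesis $\main\subseteq\UP$ had to be imposed at the start of the section: a type $\fC_n$ anti-diagonal main condition $(i,\bar i)$ would destroy the abelianness of $\cR_i$ and invalidate the passage from $\Psi_A=\Psi_B$ on the stabiliser to equation \ref{barelation}, so the case analysis above would have no foundation to stand on.
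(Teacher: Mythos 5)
Your argument is correct and rests on exactly the same mechanism as the paper's proof: apply equation \ref{barelation} at the largest $s\in S_i$ for which row $i$ of $A$ or $B$ has a nonzero entry, note that every $j_\nu>s$ contributes a zero summand because it lies strictly to the right of both rightmost nonzeros, and conclude $A_{is}=B_{is}$. The paper does this in a single stroke by choosing $s$ as the rightmost column with $A_{is}\neq 0$ or $B_{is}\neq 0$; you unpack the same step into a four-case analysis. That is cosmetically longer but substantively identical.

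However, your opening structural remark is incorrect. The hypothesis $\supp(A)\subseteq\UP$ (or $\main(A)\subseteq\UP$) only excludes anti-diagonal main conditions, which matters in type $\fC_n$; it does \emph{not} force $\rmc(A)=\emptyset$. Indeed $\trir\cap\UP=\{(a,b)\in\UP\,|\,b>\tilde n\}$ is nonempty in general, so $A$ may well have right main conditions, hence minor and supplementary conditions, and $\supp(A)=\main(A)$ can fail; in particular row $i$ of $A$ may have several nonzero entries (a right main together with its minor condition, for instance). Fortunately this false claim is never actually used: the only fact your case analysis needs is that the rightmost nonzero entry of a nonzero row is, by definition, its main condition, which holds regardless. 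So the proof stands once this preliminary paragraph is deleted or corrected.
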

\begin{proof}
We are done if row $i$ is a zero row in both $A$ and $B$. So let $(i,s)$ be a main condition $i<s<\bar i$ such that $A_{il}=B_{il}=0$ for $s<l<\bar i$. Note that $s\in S_i$. By \ref{rowivalues} we obtain:
$$B_{is}-A_{is}=\sum_{\stackrel{j_\nu>s}{1\leq \nu\leq r}} (B_{ij_\nu}-A_{ij_\nu})\alpha^s_{j_\nu}=0,$$
since $A_{ij_\nu}=0=B_{ij_\nu}$ for $j_\nu>s$. Now  the result follows.
\end{proof}

There is a further obvious application of \ref{rowivalues} and \ref{samemainRi} which we shall need later on.
\begin{Remark}\label{coincideintersection}
In the situation of \ref{samemainRi}, if the $i$-th row of $A$ and $B$ are not zero row, then by \ref{coincide1} row $i$ in $A$ and $B$ coincide, if they coincide at main hook intersections, (see \ref{hookintersection}).  \hfill $\square$
\end{Remark}

We now can prove a proposition which makes induction on the row index $i$ working:
\begin{Prop}\label{inductionstep}
Let $[A], [B]\in \hat V$ staircase and let $[A]$ be a core. Let $1\leq i<\tilde n$ and suppose that $A$ and $B$ coincide to the north of row $i$ and $\bar i$ is a zero column. Let
$
\{(i_1, j_1),\ldots, (i_r, j_r)\}=\{(a,b)\in \mc(A)\,|\,1\leq a<\tilde n, \,i<b<\bar i\}.
$
Furthermore suppose that $\Psi_A$ and $\Psi_B$ coincide on $\Stab_U(A, B)$. Then $g.[B]$ and $[A]$ coincide also on row $i$ and hence to the north of row $i+1$, where $g=x_{i,i_1}(\lambda_1)\cdots x_{i, i_r}(\lambda_r)$ for certain $\lambda_1, \ldots, \lambda_r\in \F_q$. Moreover, $g\in U_{I(A)}\cap U_{I(B)}$.
\end{Prop}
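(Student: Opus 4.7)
The plan is to construct $g$ as a product of root subgroup elements whose left permutation action adds carefully chosen linear combinations of rows $i_1,\ldots,i_r$ of $B$ into row $i$, while leaving all other rows fixed. Concretely I take $g=x_{i_1,i}(\lambda_1)\cdots x_{i_r,i}(\lambda_r)\in U$ (each $(i_\nu,i)\in\pUP$ since $i_\nu<i$; this is the only shape of product whose left action modifies row $i$ rather than the rows below). Then $g.[B]$ still agrees with $A$ to the north of row $i$ by hypothesis, and, using that rows $i_\nu$ of $B$ coincide with those of $A$, the modified row $i$ is $B_{i,\cdot}-\sum_\nu\lambda_\nu A_{i_\nu,\cdot}$. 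The task is therefore to determine the $\lambda_\nu$ so that this matches row $i$ of $A$, and then to check $g\in U_{I(A)}\cap U_{I(B)}$.

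First I pin down the $\lambda_\nu$ from the main condition columns. Matching at column $j_\nu$ reads $\sum_\mu \lambda_\mu A_{i_\mu,j_\nu}=B_{i,j_\nu}-A_{i,j_\nu}$. Since $(i_\mu,j_\mu)\in\mc(A)$ is the rightmost non-zero entry in row $i_\mu$, the coefficient $A_{i_\mu,j_\nu}$ vanishes whenever $j_\mu<j_\nu$; reading the equations from $\nu=r$ down to $\nu=1$ gives an upper triangular system with non-vanishing diagonal entries $A_{i_\nu,j_\nu}$, uniquely solvable for the $\lambda_\nu$.

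Second, I verify that this choice automatically produces matching at each remaining column $s\in S_i$, i.e.\ $\sum_\mu\lambda_\mu A_{i_\mu,s}=B_{i,s}-A_{i,s}$. Here Lemma \ref{rowivalues} supplies $B_{i,s}-A_{i,s}=\sum_{j_\nu>s}(B_{i,j_\nu}-A_{i,j_\nu})\alpha^s_{j_\nu}$; substituting the first-step relation and swapping summation order reduces the claim to the identity $A_{i_\mu,s}=\sum_{j_\nu>s}A_{i_\mu,j_\nu}\,\alpha^s_{j_\nu}$ for every $\mu$. But this is exactly row $i_\mu$ of the defining equation $A_i\underline\alpha_s=0$, once one uses that $\alpha^s_b=-\delta_{b,s}$ for $b\in S_i$ and $\alpha^s_{j_\nu}=0$ for $j_\nu<s$, both recorded in Section~\ref{sectionstab}.

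Finally, to show $g\in U_{I(A)}\cap U_{I(B)}$, I verify $(i_\nu,i)\in I(A)\cap I(B)$ for each $\nu$ with $\lambda_\nu\neq 0$. Note $(i_\nu,j_\nu)\in\mc(A)$ (and is also in $\mc(B)$ since $A$ and $B$ coincide north of row $i$) provides one of the two required main conditions; for the other I invoke Corollary~\ref{samemainRi} to produce a common main condition $(i,b)$ in row $i$ of both $A$ and $B$, and then argue via the step-one recursion that $\lambda_\nu\neq 0$ forces this $b$ to satisfy $b>j_\nu$. The core technical obstacle, as I see it, is the consistency verification in step two: it hinges on the tight interlock between the stabilizer's solution basis $\underline\alpha_s$ constructed in Section~\ref{sectionstab} and the upper triangular system solved in step one, and requires careful bookkeeping of the support pattern of the rows of $A_i$.
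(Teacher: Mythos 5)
Your proof is correct and follows the same overall strategy as the paper: take $g=x_{i_1,i}(\lambda_1)\cdots x_{i_r,i}(\lambda_r)$ (you are right that the indices in the statement should read $x_{i_\nu,i}$ with $(i_\nu,i)\in\pUP$; the proposition's $x_{i,i_\nu}$ is a slip), match row $i$ at the columns $j_\nu$ by a suitable choice of $\lambda_\nu$, and then deduce coincidence on the whole of row $i$ from Lemma~\ref{rowivalues}. The one point where you are noticeably more careful than the paper's writeup is the determination of the $\lambda_\nu$. The paper writes $\lambda_\nu=B_{i_\nu j_\nu}^{-1}(B_{ij_\nu}-A_{ij_\nu})$ and asserts $D_{ij_\nu}=B_{ij_\nu}-\lambda_\nu B_{i_\nu j_\nu}$, but since $D_{i,\cdot}=B_{i,\cdot}-\sum_\mu\lambda_\mu B_{i_\mu,\cdot}$ the correct expression for $D_{ij_\nu}$ also contains the cross terms $\lambda_\mu B_{i_\mu j_\nu}$ for $\mu\neq\nu$. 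For a core $A$ these entries are not automatically zero: $(i_\mu,j_\nu)$ can be a minor or supplementary position (for example $(i_\mu,j_\mu)\in\rmc(A)$ with $j_\nu=\overline{j_\mu}$), and then $B_{i_\mu j_\nu}=A_{i_\mu j_\nu}\neq 0$. The paper's formula is to be read recursively (apply $x_{i_r,i}(\lambda_r)$, then $x_{i_{r-1},i}(\lambda_{r-1})$, etc., recomputing the residual $B_{ij_\nu}$ each time), which amounts to exactly the back-substitution you perform explicitly on your upper-triangular system. Your second step, where you check coincidence at every $s\in S_i$, is essentially a re-derivation of Corollary~\ref{coincide1} applied to $A$ and $D$; the paper invokes the corollary directly (after arguing that $\Psi_D=\Psi_B$ and $\Stab_U[D]=\Stab_U[B]$ via the module isomorphism $\lambda_g$), whereas you avoid that detour by applying Lemma~\ref{rowivalues} to $A$ and $B$ and substituting. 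Both routes are sound; yours is slightly more self-contained since it does not invoke Corollary~\ref{leftactionSupp}. Your argument for $g\in U_{I(A)}\cap U_{I(B)}$ via Corollary~\ref{samemainRi} and the observation that the back-substitution forces $\lambda_\nu=0$ whenever $j_\nu$ exceeds the main column of row $i$ matches the structure of the paper's claim.
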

\begin{proof}
Observe that $\supp \big(x_{i_\nu i}(\mu)B\big)\subseteq \pKL$ for arbitrary $\mu\in\F_q$, since $B_{i_\nu k}=0$ for $k>j_\nu$, and hence  $B_{i_\nu k}=0$ for $k>\bar i>j_\nu$.  
One checks easily that $\supp(g^{-t}B)\subseteq \pKL$ for any choice of  $\lambda_1, \ldots, \lambda_r$. Indeed $g$ is contained in the abelian pattern subgroup to the closed subset $\{(k,i)\,|\,1\leq k <i\}$ of $U$ ($\tilde U$). Hence by \ref{leftactionSupp}  for any choice of  $\lambda_1, \ldots, \lambda_r\in \F_q$, $g$ acts by left multiplication as $\C U$-isomorphism from $\C \cO_B$ to $\C \cO_D$ with $[D]=g.[B]=[g^{-t}.B]=[\pi_{\pUPt}(g^{-t}B)]$  (and  as $\C \tilde U$-isomorphism from $\C \tilde\cO_B$ to $\C\tilde  \cO_D$ as well). For $u\in \Stab_{U}[A]$ by [\cite{GJD}, 6.4] we have therefore
\begin{equation}
(g[B])u=g([B]u)=g\big(\Psi_B(u)[B]\big)=\Psi_B(u) (g[B])
\end{equation}
proving that $\Stab_U(g.[B])=\Stab_U(g[B])=\Stab_U([B])$ and $\Psi_B=\Psi_D$, setting $g.[B]=[D]\in \hat V$.
Observe that $x_{ik}(\lambda).[B]$ adds $-\lambda$ times row $k$ to row $i$ and projects the resulting matrix into $V_{\pUPs}$. Thus $g.[B]=[D]$, where $B$ and $D$ coincide in all rows except the $i$-th one. Moreover 
$D_{ij_\nu}=B_{ij_\nu}-\lambda_\nu B_{i_\nu j_\nu}$. Setting 
$\lambda_\nu=B^{-1}_{i_\nu j_\nu}(B_{ij_\nu}-A_{ij_\nu})$ gives $D_{ij_\nu}=A_{ij_\nu}$. See illustration as follows:
\begin{center}
\includegraphics[width=0.75\textwidth]{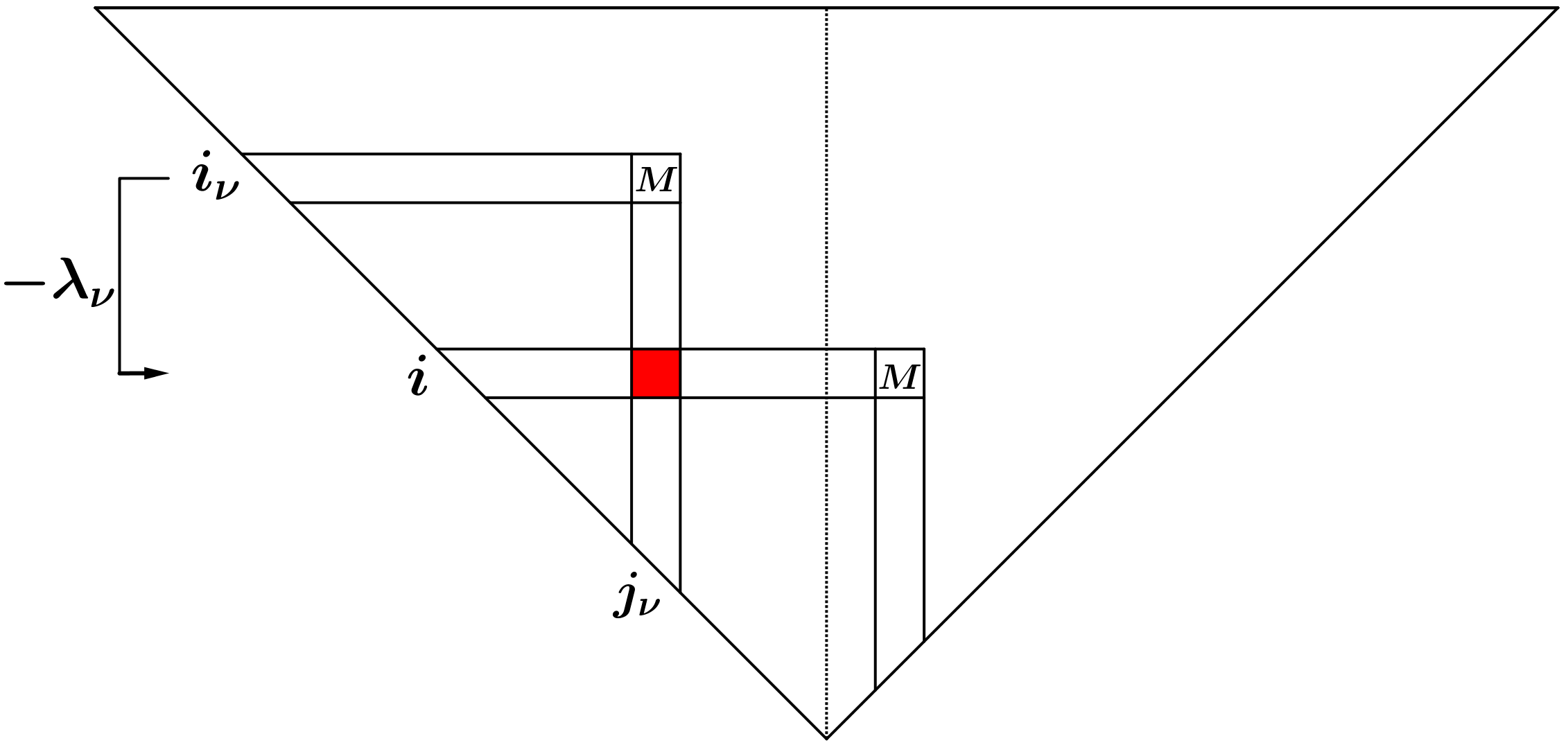}
\end{center}
Note that $D$ coincides with $B$ and hence with $A$ at all entries to the north of row $i$. Moreover by construction, $A$ and $D$ coincide on positions $(i, j_\nu)$ for all $1\leq \nu\leq r$. By the previous corollary \ref{coincide1}, $A$ and $D$ coincide in row $i$, hence $g.[B]=[D]$ coincides with $[A]$ to the north of row $i+1$ as desired. By the construction of $g$, it is obvious that $g\in U_{I(A)}\cap U_{I(B)}$.
\end{proof}

\begin{Remark}
One might worry in the proof above, what happens, if there is a main condition $(i,k)$ on row $i$ of $B$ to the left of some main condition $(i_\nu, j_\nu)$, that is $k<j_\nu$. Could it happen that $g$ inserts a non-zero value on row $i$ of $B$ to the right of that main condition? This does not happen since then, by
\ref{samemainRi}, $(i,k)\in \mc(A)$ as well and $A_{ij_\nu}=B_{ij_\nu}=0$ since $k<j_\nu$, hence $\lambda_\nu=B^{-1}_{i_\nu j_\nu}(B_{ij_\nu}-A_{ij_\nu})=0$. \hfill $\square$
\end{Remark}

% Unfortunately we have to restrict ourselves %to types $\fB_n, \fD_n$. The proof definitely does not work for type $\fC_n$. During the proof we %shall indicate the problem. Recall definition \ref{hookintersection} of $I(A)$, $A\in V\subseteq %V_{\URs}$.

We can now prove a further main result of this paper for arbitrary main separated orbits in types $\fB_n$ and $\fD_n$ and in type $\fC_n$ for orbits such that all its main conditions are contained in $\UP$.  

\begin{Theorem}\label{main3}
 Let $[A], [B]\in \hat V$ be main separated with $\supp(A), \supp(B)\subseteq \UP$. Suppose further that $[A]$ is a core, and that $\Psi_A$ and $\Psi_B$ coincide on $\Stab_U(A, B)$. Then there exists $g\in U_{I(A)}\leq U$ such that $g.[B]=[A]$.
\end{Theorem}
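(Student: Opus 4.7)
The plan is to build $g$ row by row, by induction on $i = 1, 2, \ldots, \tilde n - 1$, producing factors $g_i$ that accumulate to $h_i := g_i h_{i-1} \in U_{I(A)}$ such that $[B^{(i)}] := h_i.[B]$ coincides with $[A]$ on rows $1, \ldots, i$. Since $\supp(A), \supp(B) \subseteq \UP$ and every position of $\pUP$ lies in rows $1, \ldots, \tilde n - 1$, the final $g := h_{\tilde n - 1} \in U_{I(A)}$ will satisfy $g.[B] = [A]$. Throughout the induction, Corollary \ref{leftactionSupp} together with the commutation argument used in the proof of Proposition \ref{inductionstep} yield $\Stab_U([B^{(j)}]) = \Stab_U([B])$ and $\Psi_{B^{(j)}} = \Psi_B$ at every stage, so the coincidence hypothesis $\Psi_A = \Psi_B$ on $\Stab_U(A, B)$ propagates to $\Psi_A = \Psi_{B^{(j)}}$ on $\Stab_U(A, B^{(j)})$.

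I split the inductive step for row $i$ into two cases, according to whether column $\bar i$ of $A$ is the zero column. Since $[A]$ is a core with $\supp(A) \subseteq \UP$, the stabiliser analysis of Section \ref{sectionstab} shows that column $\bar i$ of $A$ is either zero or contains exactly one main condition, necessarily of the form $(k, \bar i)$ with $k < i$. In the zero-column case Proposition \ref{inductionstep} applied to $(A, B^{(i-1)})$ produces the desired $g_i \in U_{I(A)} \cap U_{I(B^{(i-1)})}$ directly. In the non-zero-column case Lemma \ref{HookArmsZero} applied to $[A]$ makes row $i$ of $A$ vanish, and it then suffices to show that row $i$ of $B^{(i-1)}$ vanishes, for then $g_i = 1$ works. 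To this end I first observe that row $k$ must itself have fallen under the zero-column case (otherwise Lemma \ref{HookArmsZero} applied to $[A]$ at row $k$ would force $A_{k, \bar i} = 0$, contradicting $(k, \bar i) \in \main(A)$); thus Lemma \ref{rowivalues} is available for the pair $(A, B^{(k-1)})$ at row $k$. Taking $s = \bar i \in S_k$ in that formula and exploiting $A_{k, \bar i} \neq 0$ rules out row $k$ of $B^{(k-1)}$ being a zero row and pins its rightmost non-zero entry to column $\bar i$, giving $(k, \bar i) \in \mc(B^{(k-1)})$. Since $h_{k-1}$ modifies no row of index $\geq k$, this transfers to $(k, \bar i) \in \mc(B)$; main-separatedness of $[B]$ together with Lemma \ref{HookArmsZero} then forces row $i$ of $B$, hence row $i$ of $B^{(i-1)}$, to vanish.

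The main obstacle I foresee is the delicate non-zero-column case, where one must apply Lemma \ref{rowivalues} to the pair $(A, B^{(k-1)})$ even though $B^{(k-1)}$ is not a priori main separated. Its derivation however uses only the coincidence of $A$ with the partner above row $k$, the equality of $\Psi$'s on $\Stab^k_U(A, B^{(k-1)})$, and the vanishing of column $\bar k$ of $A$, all three of which are supplied by the inductive invariant and by the case analysis on row $k$. Once this bookkeeping is verified, concatenating the $g_i$ over $i = 1, \ldots, \tilde n - 1$ produces $g := h_{\tilde n - 1} \in U_{I(A)}$ with $g.[B] = [A]$, as required.
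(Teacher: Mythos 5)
Your proof is correct and follows essentially the same strategy as the paper: row-wise induction on $i$, with Proposition~\ref{inductionstep} supplying the factor $g_i$ when column $\bar i$ of $A$ is zero, and the $\supp\subseteq\UP$ hypothesis suppressing the type-$\fC_n$ anti-diagonal obstruction. The only variation is in the case where column $\bar i$ of $A$ contains a main condition $(k,\bar i)$: you re-derive $(k,\bar i)\in\mc(B)$ by applying Lemma~\ref{rowivalues} (effectively Corollary~\ref{samemainRi}) to the pair $(A,B^{(k-1)})$ at row $k$ and transferring via the fact that $h_{k-1}$ leaves row $k$ untouched, whereas the paper reads off $(k,\bar i)\in\main(B)$ directly from the coincidence of $A$ and $B^{(i-1)}$ north of row $i$ together with the observation, stated at the start of its proof, that left action by $U_{I(B)}$ preserves $\main(B)$ and main-separatedness; both routes reach the same conclusion that row $i$ of $B$, hence of $B^{(i-1)}$, vanishes.
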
\begin{proof}
We proceed by induction on the row index $i=1,2, \ldots,\tilde n-1$, showing that we can find $g\in U_{I(A)}\cap U_{I(B)}$, such that $g.[B]$ and $[A]$ coincide north of row $i$. Observe, that $g\in U_{I(C)}$ acting on any $[C]\in \hat V,$ changes only  those rows in $C$, which contain a main hook intersection and hence a main condition, by the discussion in section 5. Thus, in particular if $[C]$ is staircase, left action by $g$ will preserve the verge of $[C]$ and if $[C]$ is main separated so is $g.[C]$

Assume first $i=1$, then $A$ and $B$ trivially coincide to the north of row $i$. Inspecting once more \ref{illTruncatedColumnOperationG} we see that for any linear character $[C]\in\hat V$ the root subgroup $X_{1j}$, ($1<j<N$) acts by a linear character on $[C]$. Hence it is contained in $\Stab_U[C]$, unless $C_{1\bar 1}\neq 0$. Moreover for $\alpha \in \F_q$
\[
[C]x_{1j}(\alpha)=\Psi_C(x_{1j}(\alpha))[C]=\theta\kappa(-C, x_{1j}(-\alpha))[C]=\theta(\alpha C_{ij})[C].
\]

Thus in particular $X_{1j}\subseteq \Stab(A,B)$ and 
\[
\Psi_A(x_{1j}(\alpha)) = \theta(\alpha A_{1j}) = \Psi_B(x_{1j}(\alpha)) = \theta(\alpha B_{1j})
\]

for all $\alpha \in \F_q$ forcing $A_{1j} = B_{1j}$ for all $1<j<N$. Since $A_{1\bar 1} = B_{1\bar 1}= 0$ by assumption we conclude that row $1$ coincides in $A$ and $B$.

%--------------------------------------------------------------

\iffalse Let $1<j<N$, $\alpha\in \F_q, [C]\in \hat V$. Then  $[D]=[C].x_{1j}(\alpha)$ is obtained by adding $-\alpha$ times column $j$ to column 1 and $\alpha$ times column $\bar i=N$ to column $\bar j$ and projecting the resulting matrix in $V_{\pUPs}$. Since  excluding type $\fC_n$ implies $\pUP=\UP$, we have  $x_{1j}(\alpha)\in \Stab_U[C]$ for any $[C]\in \hat V$. Moreover by \ref{monomial} we have

Since $\Psi_A=\Psi_B$ on $\Stab_U(A, B)=X_{1j}, j=1, \cdots,N$ we conclude $A_{1j}=B_{1j}$ for all $1<j<N$.

We remark in passing that for type $\fC_n$ even this is not true. If in $[A]\in \hat V$ (of type $\fC_n$) $A_{1\bar 1}=A_{1N}\neq 0$, then $X_{12}, \ldots, X_{1 \bar i-1}$ are not contained in $\Stab_{U}[A]$, only for $X_{1\bar 1}$ this is true.
\fi

%---------------------------------------------------------------------------------------

Setting $g_1=1\in U_{I(A)}\cap U_{I(B)}$ and $B(1)=B$, we have shown that $A$ and $B(1)$ coincide  to the north of row 2. 

Suppose we have constructed $g_i\in U_{I(A)}\cap U_{I(B)}$ such that $A$ and $B(i) $ coincide to the north  of row $i+1$ where $[B(i)]=g_i.[B]$. Now if column $\overline{i+1}$ (on which $A$ and $B(i)$ coincide) is not a zero column, it contains a main condition of $A$, since $[A]$ is a core, and hence $\cR_{i+1}\cap \Stab_U[A]=(1)$ by \ref{describestabRi}. Since $[A]$ and $[B]$ are main separated by assumption, row $i+1$ cannot have any main condition in neither in $A$ nor in $B$, and hence all entries in $B(i)$ and $A$ on row $i+1$ are zero. Thus $A$ and $B(i)$ coincide to the north of row $i+2$ once we are done with $\tilde g_{i+1}=1$ and $g_{i+1}=\tilde g_{i+1}g_i$. 

We remark that here we cannot apply this argument in case of type $\fC_n$, if the main condition on row $i+1$ is $(i+1, \overline{i+1})$. Then $X_{i+1, \overline{i+1}}\subseteq \Stab_U[A]$ and $B(i)$ can have many non-zero values on row $i+1$, which we cannot remove. Matrix A being a core, has only zeros on row $i+1$ besides $A_{i+1, \overline{i+1}}\neq 0$. Here the induction brakes down for orbits with anti-diagonal main conditions in type $\fC_n$.

So back to the case that $\supp(A)\subseteq \UP$, we assume now, that column $\overline{i+1}$ is a zero column in $A$ and hence in $B$ as well. Now $B(i)$ and $A$ satisfy the assumptions of Proposition \ref{inductionstep} since
\begin{equation}
\Stab_U(g.[B])=\Stab_U([B])\quad \text{ and } \quad\Psi_{B}=\Psi_{g.[B]}
\end{equation}
for all $g\in U$ such that $\supp(g^{-t}B)\subseteq \KL$.
In Proposition \ref{inductionstep}  we constructed $\tilde g\in U_{I(B(i))}\cap U_{I(A)}$ with $[B(i+1)]=\tilde g.[B(i)]=[A]$ such that $B(i+1)$ and $A$ coincide to the north of row $i+2$. Now the theorem follows by induction, setting $g_{i+1}=\tilde g g_i$.
\end{proof}

We draw a few immediate consequences from our theorem:
%, further applications will be presented in the next section. 
%Throughout without further notice we assume, that $U$ is of type $\fB_n$ or $\fD_n$.

\begin{Cor}
Let $[A], [C]\in \hat V$ be main separated and in the case of type $\fC_n$, we assume additionally $\supp(A),\supp(C)\subseteq \UP$. If $\Hom_{\C U}(\C \cO_A, \C \cO_C)\neq (0)$, then there exists $[B]\in \cO_C$ such that $\Psi_A$ and $\Psi_B$ coincide on $\Stab_U(A, B)$, and we have:
\begin{enumerate}
\item [i)] $\verge(A)=\verge(B)$ and hence $\cV(A)=\cV(B)$.
\item [ii)] $U_{I(A)}=U_{I(B)}.$
\item [iii)] There exists $g\in U_{I(A)}$ such that $g.[B]=[A].$
\item [iv)] $\C \tilde \cO_A=\C \tilde \cO_B$ and $g.\widetilde{[B]}=\widetilde{[A]}$.
\item [v)] $\C \cO_A\cong \C \cO_B$.
\end{enumerate}
Thus, the corresponding main separated $U$-orbit modules are either isomorphic or afford orthogonal characters.\hfill $\square$
\end{Cor}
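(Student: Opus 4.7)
The plan is to combine Corollary \ref{HomCond}(iv) with Theorem \ref{main3}, passing through the unique core of $\cO_A$ to meet the core hypothesis of Theorem \ref{main3} and then translating back via a right action to recover $[A]$.

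First, invoke Corollary \ref{HomCond}(iv): the nonvanishing of $\Hom_{\C U}(\C\cO_A,\C\cO_C)$ produces some $[B]\in\cO_C$ with $\Psi_A=\Psi_B$ on $\Stab_U(A,B)$. Let $[A_0]$ be the unique core in $\cO_A$, fix $u_0\in U$ with $[A].u_0=[A_0]$, and set $[B_0]:=[B].u_0\in\cO_C$. Conjugation by $u_0$ identifies $\Stab_U(A_0,B_0)=u_0^{-1}\Stab_U(A,B)u_0$ and intertwines the linear characters ($\Psi_{A_0}(u_0^{-1}hu_0)=\Psi_A(h)$, analogously for $B_0,B$), so $\Psi_{A_0}=\Psi_{B_0}$ on $\Stab_U(A_0,B_0)$. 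Since main separatedness and the type-$\fC_n$ support condition $\supp\subseteq\UP$ are both orbit invariants and $[A_0]$ is a core by construction, Theorem \ref{main3} applies to $([A_0],[B_0])$ and delivers $g\in U_{I(A_0)}=U_{I(A)}$ with $g.[B_0]=[A_0]$.

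Because the monomial left $\widetilde U$-action preserves biorbits (Section \ref{secverge}), $\verge(B_0)=\verge(g.[B_0])=\verge(A_0)$. Hence $I(B_0)=I(A_0)$, so $g\in U_{I(B_0)}$; adapting the argument in the proof of the Lemma of Section \ref{secverge} then yields $\supp(g^{-t}B_0)\subseteq \pKL$. By Corollary \ref{leftactionSupp}, $\lambda_g$ on $\C\cO_{B_0}$ agrees with the monomial left action up to the scalar $\chi_{-B_0}(g^{-1})$. Since $\lambda_g$ commutes trivially with the right-multiplication $\rho_{u_0^{-1}}$ in $\C U$ and $[B_0].u_0^{-1}=[B]$, we obtain
\[g.[B]\;=\;g.\bigl([B_0].u_0^{-1}\bigr)\;=\;\bigl(g.[B_0]\bigr).u_0^{-1}\;=\;[A_0].u_0^{-1}\;=\;[A],\]
with the second equality coming from the $\lambda_g/\rho_{u_0^{-1}}$ commutation, Corollary \ref{leftactionSupp}, and the linear independence of $\hat V$ in $\C\hat V$. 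Because $\lambda_g$ is then a $\C U$-isomorphism $\C\cO_B\to\C\cO_A$, stabilizers and their linear characters are preserved, reconfirming $\Psi_A=\Psi_B$ on $\Stab_U(A,B)$.

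Items (i)--(v) then drop out. Item (iii) is the $g$ above. Item (i) follows from $\verge(A)=\verge(A_0)=\verge(B_0)=\verge(B)$, using right $U$-orbit invariance of $\verge$. Item (ii) is immediate from (i). For (iv), $\C\tilde\cO_A=\C\cV(A)=\C\cV(B)=\C\tilde\cO_B$ by (i), and $g.\widetilde{[B]}=\widetilde{[A]}$ follows from the agreement of the left $U_{I(A)}$- and $\widetilde U_{I(A)}$-actions on $\C\cV(A)$ provided by the final Corollary of Section \ref{secverge}. Item (v) is the isomorphism $\lambda_g$. Finally, if $\Hom_{\C U}(\C\cO_A,\C\cO_C)\neq 0$ then (v) gives $\C\cO_A\cong\C\cO_B=\C\cO_C$; otherwise $\langle\chi_{\cO_A},\chi_{\cO_C}\rangle=\dim\Hom_{\C U}(\C\cO_A,\C\cO_C)=0$ and the characters are orthogonal. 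The principal technical obstacle is the middle equality in the display: propagating the commutation of $\lambda_g$ and $\rho_{u_0^{-1}}$ through the ``up to scalar'' identification of Corollary \ref{leftactionSupp} requires $\supp(g^{-t}B_0)\subseteq \pKL$, which in turn depends on the verge identity $\verge(A_0)=\verge(B_0)$ secured by biorbit invariance.
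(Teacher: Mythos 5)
Your reconstruction is correct and follows the route the paper evidently intends: combine Corollary \ref{HomCond}(iv) with Theorem \ref{main3}. The one place where you supply genuinely new content is the reduction to and from the core of $\cO_A$. Theorem \ref{main3} is stated with the hypothesis that $[A]$ is a core, whereas the corollary permits any main separated $[A]$, so the paper's unwritten $\square$-proof glosses over exactly the step you flag. Your handling of it is sound: conjugating by $u_0$ transports the coincidence of linear characters, main3 applies to $([A_0],[B_0])$ and yields $g\in U_{I(A_0)}=U_{I(A)}$ with $g.[B_0]=[A_0]$, and the verge identity $\verge(A_0)=\verge(B_0)$ (which you get from biorbit invariance; the remark in main3's proof that $g\in U_{I(A_0)}\cap U_{I(B_0)}$ preserves verges gives it even more directly) supplies $I(B_0)=I(A_0)$ and then the support condition $\supp(g^{-t}B_0)\subseteq\pKL$. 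With that in hand, the commutation of the left ``.''-action of $g$ with the right $U$-action on $\cO_{B_0}$ (Remark \ref{isotostaircase} together with Lemma 6.4 of [\cite{GJD}]) yields $g.[B]=[A]$, and since $\pKL\cap\UR=\pUP$ the same support condition makes $g.\widetilde{[B]}=\widetilde{[A]}$ in (iv) literal rather than merely up to a discrepancy between $\pi_{\UR}$ and $\pi_{\pUP}$. All of (i)--(v) then follow for the $[B]$ you construct, and the orthogonality statement follows from (v) and Mackey as you say. In short: same approach as the paper, with the core-passage step written out explicitly; this is a useful addition rather than a deviation.
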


\begin{Cor}
If $U$ is of type $\fB_n$ or $\fD_n$ then every irreducible $\C U$-module is constituent of precisely one isomorphism class of main separated $U$-orbit modules in $\C \hat V$. \hfill $\square$
\end{Cor}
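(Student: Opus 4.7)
The final statement combines two ingredients already available in the paper, so the plan is to simply assemble them. For existence, I would invoke Theorem \ref{Main1} directly: every irreducible $\C U$-module $M$ is a constituent of some orbit module $\C \cO_A$ with $[A]\in\hat V$ main separated. Since in types $\fB_n$ and $\fD_n$ we have $\pUP=\UP$, so the support hypothesis $\supp(A)\subseteq\UP$ needed for the preceding Corollary is automatic for every $[A]\in\hat V$ — the type-$\fC_n$ anti-diagonal obstruction simply does not appear.

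For uniqueness, I would argue by contradiction: suppose $M$ is a common irreducible constituent of two main separated orbit modules $\C\cO_A$ and $\C\cO_B$ lying in distinct isomorphism classes. Because $\C U$ is semisimple, projection onto the isotypic component of $M$ in $\C\cO_B$ followed by inclusion of $M$ into $\C\cO_A$ (or vice versa) produces a non-zero element of $\Hom_{\C U}(\C\cO_A, \C\cO_B)$; equivalently, the characters afforded by $\C\cO_A$ and $\C\cO_B$ are not orthogonal, since they share $M$ as a constituent. The preceding Corollary then forces $\C\cO_A \cong \C\cO_B$, contradicting the assumption that their isomorphism classes are distinct. Hence the isomorphism class of main separated orbit module containing $M$ as constituent is unique.

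In summary, the entire proof is a direct consequence: existence is Theorem \ref{Main1}, and uniqueness follows from the dichotomy (isomorphic or orthogonal) established in the preceding Corollary, applied in the setting where the support restriction is vacuous. I do not foresee any genuine obstacle; the only thing to double-check is that the preceding Corollary really applies to arbitrary main separated $[A],[B]\in\hat V$ in types $\fB_n,\fD_n$, which it does precisely because $\pUP=\UP$ in those types.
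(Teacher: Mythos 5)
Your argument is correct and matches the paper's intended proof: existence is exactly Theorem \ref{Main1}, and uniqueness follows from the isomorphism-or-orthogonality dichotomy of the immediately preceding Corollary, whose extra support hypothesis is imposed only for type $\fC_n$ (and, as you note, is vacuous anyway in types $\fB_n,\fD_n$ since there $\pUP=\UP$). No gap.
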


\begin{Remark}
It is obvious for $[A]\in \hat V$ that $\cV(A)$ is $\widetilde U$-invariant and in fact affords an Andr\'{e}-Yan supercharacter defined in [\cite{andre}, \cite{yan}]. The $\widetilde U$-orbit modules $\C\widetilde \cO_A$ with $\main(A)\subseteq \pUP$ decompose upon restriction to $U$ into the direct sum of staircase orbit modules $\C\cO_B$, where $[B]$ runs throug the set  of all staircase core characters in $\hat V$ satisfying $\verge(B) = \verge(A)$. In [\cite{andreneto1,andreneto2,andreneto3}] Andr\'{e} and Neto constructed supercharacter theories for types $\fB_n, \fD_n$ and $\fC_n$. Their superclasses arise as intersections of Andr\'{e}-Yan $\widetilde U$-superclasses with $U\leq \widetilde U$. Recall, that the Andr\'{e}-Yan $\widetilde U$-superclasses are labelled by verge matrices, that are matrices in $\widetilde V$ having in each row and in each column at most one nonzero entry (see e.g. [\cite{andre}] and [\cite{yan}]).  Abusing notation we call the positions $(i,j)$ with $1\leq i<j\leq N$ for a verge matrix $A\in \widetilde V$ such that $A_{ij}\neq 0$ again main conditions and denote the set of main conditions of $A$ by $\main(A)$. For a verge matrix $A\in \widetilde V$ with $\supp (A) \subseteq\ \pUP$ let (derived from \ref{rootsubgroupsU})
\begin{equation}\label{Uvergematrix}
\hat A = \sum_{(i,j)\in\main  (A)}A_{ij}e_{ij}- A_{ij}e_{\bar j\bar i}.
\end{equation}
in type $\fD_n$ and for type $\fB_n$, if $j\neq n+1$. For type $\fB_n$ we define in addition 
\begin{equation} \label{UvergematrixB}
\hat A = \sum_{(i,n+1)\in\main  (A)}A_{i,n+1}e_{i,n+1}- A_{i,n+1}e_{n+1,\bar i} - \frac{1}{2}A_{i\bar i}^2.
\end{equation}

For type $\fC_n$ we set 
\begin{equation}\label{UvergematrixC}
\hat A = \sum_{(i,j)\in\main  (A), j< \bar i }A_{ij}e_{ij}-\epsilon A_{ij}e_{\bar j\bar i} + \sum_{1\leq i < n} A_{i\bar i}e_{i\bar i},
\end{equation}
where $\epsilon = -1$ for  $j> n$ and $\epsilon = 1$ otherwise (for types $\fB_n, \fD_n$ we set always $\epsilon = 1$ ).
Then $\hat A$ is a verge matrix if and only if $[A]\in V_{\pUPs}$ is main separated, as seen by the following counterexample:

\begin{equation}\label{mainbar}
\includegraphics[width=0.4\textwidth]{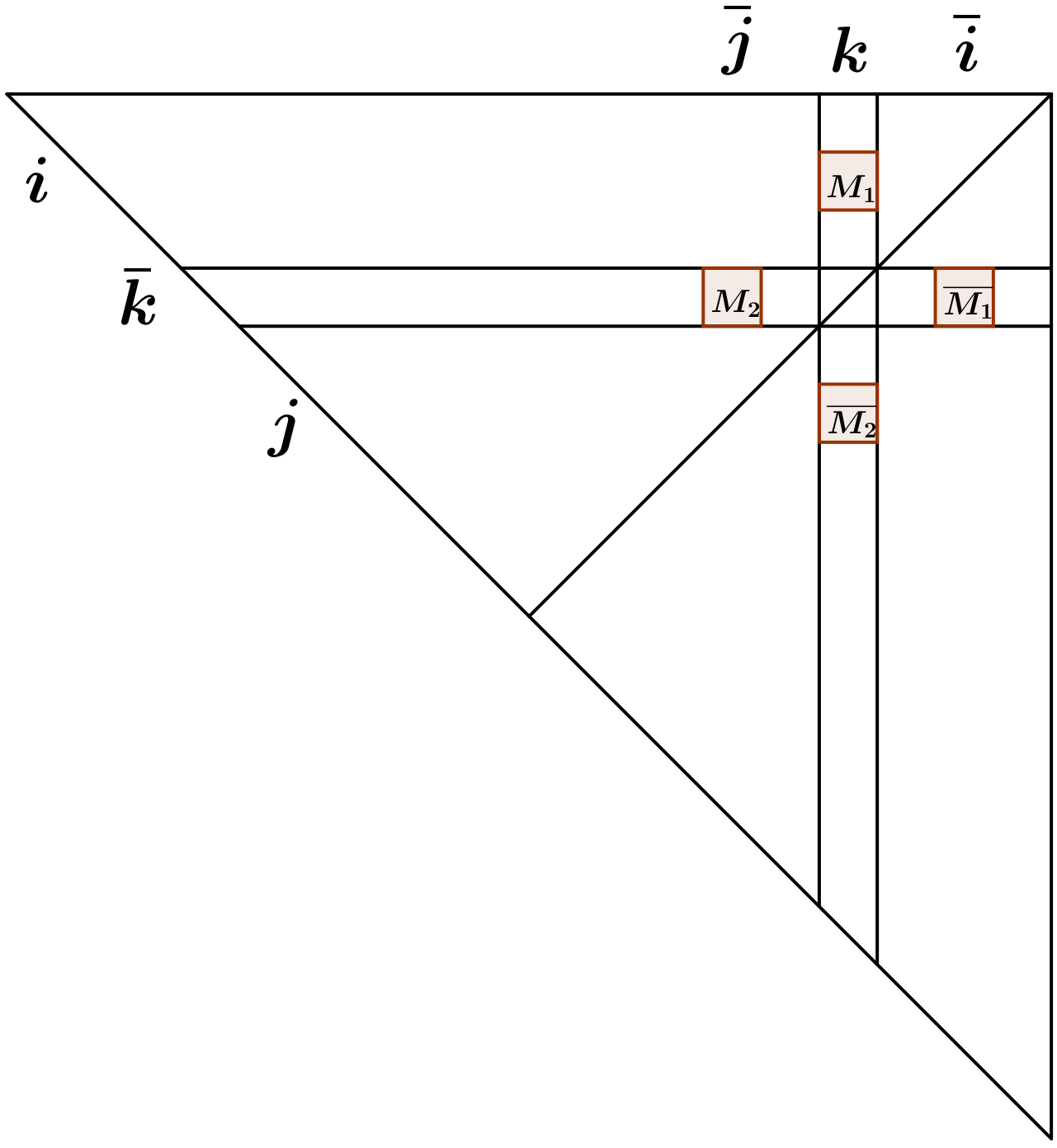}
\end{equation}
 
 The Andr\'{e}-Neto $U$-superclasses are labelled again by those verge matrices $A\in   V = V_{\pUPs}$ such that $\hat A$, as defined in \ref{Uvergematrix} and \ref{UvergematrixC} respectively, is a verge matrix again. Thus $[A]\in\hat V$ is main separated. The corresponding superclass is then given as (using ordinary matrix multiplication):
\begin{equation}\label{ANsuperclasses}
1+\widetilde U \hat A\widetilde U \cap U  = \{u\in U \,|\,u = 1+a\hat A b, a,b\in \widetilde U\}
\end{equation}

This explains, why we have to restrict ourselves to main separated characters $[A] \in \hat V$ for the description of the Andr\'{e}-Neto $U$-supercharacters too. Indeed, as our result \ref{Main1} shows, orbit modules to cores $[A]\in\hat V$ which are not main separated  may
have irreducible constituents in common, without being isomorphic. On the other hand the verge modules as defined in \ref{vergemodules} generated by  main separated cores are the restrictions to $U$ of those $\tilde U$-orbit modules defined by Andr\'{e}-Yan, which afford $\widetilde U$-supercharacters and whose main conditions are contained in $\pUP$. By the discussion above they are in bijection with Andr\'{e}-Neto $U$-superclasses. Thus our results yield for types $\fB_n, \fD_n$ a decomposition of Andr\'{e}-Neto $U$-supercharacters into characters afforded by $U$-orbit modules, which are again orthogonal or equal. For type $\fC_n$ this is still conjectural. Since every irreducible character occurs in precisely one of those, one may ask if they are the supercharacters of a finer supercharacter theory for $U$.\hfill $\square$
\end{Remark}

%%%%%%%%%%%%%%%%%%%%%%%%%%%%%%%%%%%%%%%%%%%%%%%%%%%%%%%%%%%%%%%%%%%%%%%%%%

\providecommand{\bysame}{\leavevmode ---\ }
\providecommand{\og}{``} \providecommand{\fg}{''}
\providecommand{\smfandname}{and}
\providecommand{\smfedsname}{\'eds.}
\providecommand{\smfedname}{\'ed.}
\providecommand{\smfmastersthesisname}{M\'emoire}
\providecommand{\smfphdthesisname}{Th\`ese}

\end{document}